\newlength{\defbaselineskip}
\newcommand{\setlinespacing}[1]%
           {\setlength{\baselineskip}{#1 \defbaselineskip}}
\def\M{\mathfrak{M}}
\def\kis{\text{Mod}^{\phi}_{/W_2[[u]]}}
\newtheorem{thm}{Theorem}[section]
\newtheorem{prop}[thm]{Proposition}
\newtheorem*{thmintro}{Theorem}
\newtheorem*{propintro}{Proposition}
\newtheorem{lem}[thm]{Lemma}
\newtheorem{cor}[thm]{Corollary}
\theoremstyle{definition}
\newtheorem{defn}[thm]{Definition}
\newtheorem{lemdefn}[thm]{Lemma-Definition}
\newtheorem{ex}[thm]{Example}
\theoremstyle{remark}
\newtheorem{rem}[thm]{Remark}
\date{}
\DeclareMathOperator{\Hom}{Hom} \DeclareMathOperator{\Ext}{Ext}
\newcommand \piR {\pi R\setminus\{0\}}
\newcommand \ap{\alpha_p}
\newcommand \bbf[1]{\textbf{\textit{#1}}}
\newcommand \F{\mathbb{F}}
\newcommand \g{\mathcal{G}}
\newcommand \glb{\g^{(\lambda)}}
\newcommand \gmu{\g^{(\mu)}}
\newcommand \gmup{\g^{(\mu^p)}}
\newcommand \glbp{\g^{(\lambda^{p})}}
\newcommand \glx[1]{G_{{\lambda},{#1}}}
\newcommand \gmx[1]{G_{{\mu},{#1}}}
\newcommand \gm{\mathbb{G}_m}
\newcommand \gmS{\mathbb{G}_{m,S}}
\newcommand \gmK{\mathbb{G}_{m,K}}
\newcommand \gmSl{\mathbb{G}_{m,S_\lb}}
\newcommand \gmSlp{\mathbb{G}_{m,S_{\lb^p}}}
\newcommand \Ga{\mathbb{G}_a}
\newcommand \GaS{\mathbb{G}_{a,S}}
\newcommand \h[3]{H^{#1}(#2,#3)}
\newcommand \hxg[1]{H^{1}(X,#1)}
\newcommand \lb{\lambda}
\newcommand \muS{\mu_{p,S}}
\newcommand \N{\mathbb{N}}
\newcommand \Q{\mathbb{Q}}
\newcommand \Z{\mathbb{Z}}
\newcommand \ha{\mbox{$\hookrightarrow$}}
\newcommand \id{\operatorname{id}}
\newcommand \In{\subseteq}
\newcommand \too{\longrightarrow}
\newcommand \mTo {\longmapsto}
\newcommand \on{\stackrel}
\renewcommand \phi{\mbox{$\varphi$}}
\newcommand \pt   {\otimes}
\renewcommand \rho{\mbox{$\varrho$}}
   \def\clE{{\cal E}}
 \def\clG{{\cal G}}
\newcommand \fr{\operatorname{F}}
\newcommand \rad{rad_{\mu,\lb}}
\newcommand \Gak{\mathbb{G}_{a,k}}
\newcommand  \apk {\alpha_{p,k}}
\def\scE{\mathscr{E}}
\DeclareMathOperator{\ext}{Ext}
 \DeclareMathOperator \im {Im}
\DeclareMathOperator{\Sp}{Spec}
\renewcommand\l@subsection{\@tocline{2}{0pt}{5pc}{5pc}{}}
\begin{document}
\title[Models of $\mu_{p^2,K}$]{Models of $\mu_{p^2,K}$  over a discrete valuation ring}
\author[Dajano Tossici]{Dajano Tossici\\with an appendix by Xavier Caruso}
\begin{abstract}
Let $R$ be a discrete valuation ring  with residue field of
characteristic $p>0$. Let  $K$ be its fraction field. We prove
that any finite and flat $R$-group scheme,  isomorphic to
$\mu_{p^2,K}$ on the generic fiber, is the kernel in a short exact
sequence which generically coincides with the Kummer sequence. We
will explicitly describe and classify such models. In the appendix
X. Caruso shows how to classify models of $\mu_{p^2,K}$, in the
case of unequal characteristic, using the Breuil-Kisin theory.
\end{abstract}
\keywords{group schemes, Witt vectors}

\address{Dipartimento di Matematica, Università di Roma Tre, Rome, Italy}
\email{dajano.tossici@gmail.com} \maketitle
\begin{center}
\end{center}
\tableofcontents
\section*{Introduction}
 \textsc{Notation and Conventions.} 
If not otherwise specified we denote by $R$ a discrete valuation
ring (in the sequel d.v.r.) 
with residue field $k$ of
characteristic $p>0$. Moreover we write $S=\Sp(R)$. 
If for $n\in \N$ there exists a distinguished primitive $p^n$-th
root of unity $\zeta_n$ in a d.v.r. $R$,
 we call $\lambda_{(n)}:=\zeta_n-1$. 
 Moreover we suppose $\zeta_{i-1}=\zeta_i^p$ for $2\le i\le n$.
 We will denote by
$\pi\in R$ one fixed  uniformizer of $R$. 
All the schemes will be assumed noetherian. All the cohomological
groups are calculated in the fppf topology.
%

\vspace{1cm}

Let $K$ be a field of characteristic $0$ which contains a
primitive $p^n$-th root of unity:  this implies
$\mu_{p^n,K}\simeq(\Z/p^n\Z)_K$. We recall the following
 exact
sequence
$$
1\too \mu_{p^n,K}\too\gmK\on{p^n}{\too} \gmK\too 1,
$$
so-called the Kummer sequence. The Kummer theory says that this
sequence is universal, namely any $p^n$-cyclic Galois extension of
$K$ can be deduced by the Kummer sequence. We stress that this
sequence can be written also as follows
\begin{equation}\label{eq:Kummer sequence per n=2}
1\too \mu_{p^n,K}\too\gmK^n\on{\theta_n}{\too} \gmK^n\too 1
\end{equation}
where
$\theta_n((T_1,\dots,T_n))=(1-T_1^p,T_1-T_2^p,\dots,T_{n-1}-T_n^p)$.

Let $k$ be a field of characteristic $p>0$. The following exact
sequence
$$
0\too \Z/p^n\Z\too W_n(k)\on{\fr-1}\too W_n(k)\too 0,
$$
where $W_n(k)$ is the group scheme of Witt vectors of length $n$,
is called the Artin-Schreier-Witt sequence. The
Artin-Schreier-Witt theory implies that any $p^n$-cyclic Galois
covering of $k$ can be deduced by the Artin-Schreier-Witt
sequence.

 Let now $R$ be a d.v.r. of unequal characteristic which contains a $p^n$-th root of
unity. There exists a theory, the so called
Kummer-Artin-Schreier-Witt theory, which unifies the above two
theories. This means that there exists, for any $n$, a sequence
\begin{equation}\label{eq:KASWn}
0\too \Z/p^n\Z\too {\cal{W}}_{n}\too \cal{W}_{n}'\too 0,
\end{equation}
with ${\cal{W}}_{n}$ and $\cal{W}_n'$ smooth, which coincides with
the Kummer sequence over the generic fiber, and with the
Artin-Schreier-Witt sequence over the special fiber.

The case $n=1$ has been independently studied  by
Oort-Sekiguchi-Suwa (\cite{SOS}) and Waterhouse (\cite{Wat3}).
Later Green-Matignon (\cite{GM1}) and Sekiguchi-Suwa(\cite{SS4})
have, independently, constructed explicitly a unifying exact
sequence for $n=2$.   The case $n>2$  is treated in \cite{KASW2}
and \cite{KASW1}. The universality of these sequences is discussed
in \cite{SS2}. We remark that the explicit
Kummer-Artin-Schreier-Witt theory has been one of the main tools
used by Green-Matignon (\cite{GM1}) to give a positive answer to
the lifting problem for
cyclic Galois covers of order $p^2$. 

From now on $R$ is a d.v.r. with residue field of characteristic
$p>0$. In this paper we  study finite and flat $R$-group schemes
of order $p^2$ which are isomorphic to $\mu_{p^2,K}$ on the
generic fiber, i.e. models of $\mu_{p^2,K}$. And we prove that,
for any such a group scheme $G$, there exists an exact sequence
\begin{equation}\label{eq:sequenza esatta}
0\too G\too \clE_1 \too \clE_2\too 0,
\end{equation}
with $\clE_1,\clE_2$ smooth $R$-group schemes, which coincides
with the Kummer sequence on the generic fiber. So if $R$ is of
unequal characteristic and contains a primitive $p^2$-th root of
unity, the sequence \eqref{eq:KASWn}, with $n=2$, is the special
case when we consider the model $G=\Z/p^2\Z$. The sequence
\eqref{eq:sequenza esatta} is universal in the sense that any
$G$-torsor over a local scheme is obtained pulling back this
sequence (see \cite[\S 2.3]{io3}). Moreover we will describe
explicitly all such isogenies and their kernels and we will give a
classification of
models of $\mu_{p^2,K}$. 
The case of models of $\mu_{p,K}$ is well known.

 We now explain more precisely the classification we have
obtained.  In the first section we recall the definition of a
class
 of
group schemes of order $p$, called $G_{\lb,1}$ with $\lb \in
R\setminus\{0\}$, which are isomorphic to $\mu_{p,K}$ on the
generic fiber. If $R$ of unequal characteristic $\lb$ must also
satisfy the inequality $(p-1)v(\lb)\le v(p)$.   We remark that for
any of them there is an exact sequence
$$
0\too G_{\lb,1}\too \glb\too \glbp\too 0
$$
where $\glb$ and $\glbp$  are $R$-smooth models of $\gmK$.
Moreover on the generic fiber this sequence coincides with Kummer
sequence.
We  recall the following well known result.
\begin{propintro}$\ref{teo:modelli di Z/pZ}$.  If $G$ is a finite and
flat $R$-group scheme such that $G_K\simeq \mu_{p,K}$ then
$G\simeq G_{\lb,1}$ for some $\lb\in R\setminus\{0\}$.
\end{propintro}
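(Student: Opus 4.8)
The plan is to read the proposition as a statement about Hopf orders and to recover $\lb$ from the position of the tautological character. Write $A=\Gamma(G,\oo{G})$, a finite flat, hence free, $R$-algebra of rank $p$, and set $A_K:=A\pt_R K$. Since $G_K\simeq\mu_{p,K}$ there is a canonical grouplike element $t\in A_K$, namely the tautological character $\mu_{p,K}\ha\gmK$; it satisfies $t^p=1$, $\Delta t=t\pt t$, $\epsilon(t)=1$, $S(t)=t^{-1}$, and $A_K=K[t]/(t^p-1)$. Thus $A$ is an $R$-Hopf order in $A_K$, and the whole content is to show that every such order is $R$-Hopf isomorphic to the coordinate ring of some $G_{\lb,1}$.

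First I would produce the coordinate and the parameter simultaneously. The element $u:=t-1\in A_K$ lies in the augmentation ideal, and $\{x\in K:\ x\,u\in A\}$ is a nonzero fractional ideal of $R$ (it is a nonzero $R$-submodule of $K$, bounded because $A$ is a lattice and $u\neq 0$), say equal to $\lb^{-1}R$; this defines $\lb\in R\setminus\{0\}$ up to a unit, which is harmless since rescaling $\lb$ by a unit yields an isomorphic group scheme. Set $v:=u/\lb$, so that $v\in A\setminus\pi A$, $\epsilon(v)=0$, and $t=1+\lb v$. Because $t$ is grouplike, the identity $\Delta(1+\lb v)=(1+\lb v)\pt(1+\lb v)$ holds in $A_K\pt_K A_K$; as $A\pt_R A$ is $R$-free it embeds in its generic fibre, so the identity already holds in $A\pt_R A$, and likewise the counit and antipode take the required form. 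Expanding $t^p=1$ gives $\frac{(1+\lb v)^p-1}{\lb^p}=0$, whose coefficient of $v^i$ is $\binom{p}{i}\lb^{i-p}$; integrality of all of them reduces to that of the linear term $p\lb^{1-p}$, which is exactly the inequality $(p-1)v(\lb)\le v(p)$ (vacuous in equal characteristic).

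It then remains to check that $1,v,\dots,v^{p-1}$ is an $R$-basis of $A$, i.e.\ that $A=R[v]$; granting this, $A$ is precisely the coordinate ring of $G_{\lb,1}$ with its Hopf structure, and the sequence $0\too G_{\lb,1}\too\glb\too\glbp\too 0$ recovers the presentation of the statement. For the equality $A=R[v]$ I would reduce modulo the maximal ideal and use that a finite flat group scheme of order $p$ over $k$ is $\mu_{p,k}$, $\apk$, or $(\Z/p\Z)_k$, each monogenic of rank $p$; combined with the primitivity of $v$ (built into the conductor-theoretic choice of $\lb$), one checks that $\bar v$ generates $A\pt_R k$, whence $v$ generates $A$ over $R$ by Nakayama.

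The main obstacle is exactly this last step: ensuring that the normalized $\bar v$ is not merely nonzero but actually generates the whole special fibre, rather than landing in a proper subalgebra (e.g.\ in the square of the augmentation ideal), and simultaneously that the conductor-theoretic $\lb$ is the one rendering the relation integral. One clean way to dispose of both points at once is to invoke the Oort--Tate classification of group schemes of order $p$: it presents $A$ in a normal form $R[x]/(x^p-ax)$ governed by a second parameter $b$ with $ab$ a unit multiple of $p$, and a direct comparison of the cotangent module $I/I^2\simeq R/(a)$ with $I/I^2\simeq R/(p\lb^{1-p})$ identifies the two parametrizations, yielding $v(\lb)=\bigl(v(p)-v(a)\bigr)/(p-1)$ and the desired isomorphism $G\simeq G_{\lb,1}$, with the constraint $v(a)\ge 0$ matching $(p-1)v(\lb)\le v(p)$.
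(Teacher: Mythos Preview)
Your approach is genuinely different from the paper's. The paper argues via N\'eron blow-ups: the model map $G \to \mu_{p,S}$ factors, by a result of Waterhouse--Weisfeiler, as a finite chain of dilatations at closed subgroups of the special fibre; one then checks that the only nontrivial dilatation of $G_{\mu,1}$ is at the identity section and yields $G_{\mu\pi,1}$ (or a non-finite scheme in the \'etale boundary case), so by induction $G \simeq G_{\lambda,1}$ with $v(\lambda)$ equal to the length of the chain. This is short and geometric, and the bound $(p-1)v(\lambda)\le v(p)$ comes for free as the obstruction to a further blow-up remaining finite.

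Your Hopf-order argument is natural, but beyond the gap you already flag (that $\bar v\neq 0$ need not generate $A\otimes_R k$), note that your derivation of $(p-1)v(\lambda)\le v(p)$ is circular as written: integrality of the coefficients $\binom{p}{i}\lambda^{i-p}$ in the relation is exactly the statement that $1,v,\dots,v^{p-1}$ span $R[v]$ freely, which is what remains to be shown. The primitivity observation does help when $v(\lambda)>0$ and $G_k\simeq\alpha_{p,k}$ (primitive elements of $k[x]/(x^p)$ with the additive comultiplication are precisely $kx$), but the cases $G_k\simeq\mu_{p,k}$ and $G_k\simeq(\Z/p\Z)_k$ would still need separate treatment. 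Your fallback to Oort--Tate is certainly valid and does prove the statement; indeed it proves it outright, since Oort--Tate already classifies all order-$p$ group schemes over $R$ and one only has to read off which have generic fibre $\mu_{p,K}$. At that point the conductor-theoretic extraction of $\lambda$ becomes redundant rather than a step towards the proof.
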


One crucial remark is the following: any model of a diagonalizable
group scheme of order $p^2$ is an abelian extension of $G_{\mu,1}$
by $G_{\lb,1}$ for some $\mu,\lb\in R\setminus\{0\}$ (see
\ref{lem:modelli di Z/p^2 Z sono estensioni}). This leads us  to
study the group $\Ext^1_S{(G_{\mu,1},G_{\lb,1})}$.
 We stress we compute
  $\Ext^1_S(G,H)$  in the category of abelian fppf
 sheaves over $S$
 and this is not at all restrictive for our purposes by the above remark.

As one could imagine the above group of extensions is related to
the group $\Ext^1_S(\clG^{(\mu)},\clG^{(\lb)})$. In the second
section we essentially recall the theory of Sekiguchi-Suwa, which
gives an interesting interpretation of this group in terms of Witt
vectors. Moreover a few preliminary results are added.

In   \S \ref{sec:modelli di Z/p^2Z} we study the models of
$\mu_{p^2,K}$.
 First of all  we investigate on
$\Ext^1_S{(G_{\mu,1},G_{\lb,1})}$. We  distinguish four cases:
$$
\begin{array}{ll}
    \Ext^1_S(\mu_{p,S},\mu_{p,S}); \\
    \Ext^1_S(\mu_{p,S},\glx{1}), & \hbox{with  $\lb\in \pi R\setminus\{0\}$;} \\
    \Ext^1_S(\gmx{1},\mu_{p,S}), & \hbox{with $\mu \in \pi R \setminus\{0\}$;} \\
    \Ext^1_S(\gmx{1},\glx{1}), & \hbox{with $\mu,\lb \in \pi R \setminus\{0\}$.} \\
\end{array}%
$$
The first three cases are easy and already known. The fourth case
is  more subtle. If $\lb'\in \pi R$, let $S_{\lb'}:=\Sp(R/\lb' R)$
and let $i_{\lb'}$ be the closed immersion $i_{\lb'}:S_{\lb'} \too
S$. Let us suppose that $\mu,\lb \in \pi R\setminus \{0\}$.  We
define the group
\[\rad:=\left\{
\begin{array}{c}
\text{$(F(T),j)\in
\Hom_{S_{\lambda}-{{gr}}}(i_{\lb}^*G_{\mu,1},\mathbb{G}_{m,S_{\lambda}})\times \Z/p\Z$ such that}\\[1mm]
F(T)^p(1+\mu T)^{-j}=1\in
\Hom_{S_{\lambda^p}-{{gr}}}(i_{\lb^p}^*G_{\mu,1},\
G_{m,S_{\lambda^p}})
\end{array}\right\}\Bigl/\langle 1+\mu T \rangle\]
In the rest of the paper, with a slight abuse of notation, for  a
group scheme $G$ over $R$ and $\lb'\in \pi R$, we will sometimes
simply write $G$ to indicate its restriction over $S_{\lb'}$,
instead of writing $i_{\lb'}^* G$. However it will be clear from
the context what we mean and it should not cause any confusion.

Let  $(F(T),j)\in \rad$ and $\widetilde{F}(T)\in R[T]$ any its
lifting. We will explicitly define in \S\ref{subsec:explicit
description...}  a finite and flat $R$-group scheme
$\clE^{(\mu,\lb;\widetilde{F},j)}$ which is extension of
$G_{\mu,1}$ by $G_{\lb,1}$. Its extension class
$[\clE^{(\mu,\lb;\widetilde{F},j)}]$ in
$\ext^1_S(G_{\mu,1},G_{\lb,1})$ does not depend on the choice of
the lifting. So we will sometimes denote it  simply with
$[\clE^{(\mu,\lb;{F},j)}]$, omitting to specify the lifting. The
above group scheme is the kernel of an isogeny of smooth group
schemes of dimension $2$. This isogeny is generically isomorphic
to the Kummer sequence, in the form of \eqref{eq:Kummer sequence
per n=2}.
 Using this notation, we  give a description of $\Ext^1_S(G_{\mu,1},\glx{1})$.
\begin{thmintro}$\ref{teo:ext1(glx,gmx)}$. Let  $\mu,\lb\in \pi R\setminus\{0\}$. If $char(R)=0$, we suppose \mbox{$(p-1)v(\lb),(p-1)v(\mu)\le
v(p)$.}
Then  there exists an exact sequence
\begin{equation*}
\begin{array}{ll}
0\too \rad \on{\beta}{\too}&
\Ext^1_S(G_{\mu,1},\glx{1})\too\\
&\too \ker \bigg(H^1(S,G_{\mu,1}^\vee)\too
H^1(S_\lb,G_{\mu,1}^\vee)\bigg) 
\end{array}
\end{equation*}
 where $\beta$ is defined by
$$
(F,j)\longmapsto [\clE^{(\mu,\lb;F,j)}].
$$
In particular the set $\{[\clE^{(\mu,\lb;F,j)}]; (F,j)\in
\rad\}\In \Ext^1_S(G_{\mu,1},\glx{1})$ is a group  isomorphic to
$\rad$.
\end{thmintro}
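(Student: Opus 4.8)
The plan is to construct the exact sequence by analyzing the extension $\Ext^1_S(G_{\mu,1},\glx{1})$ through the devissage provided by the structure of $\glx{1}$. Recall that there is an exact sequence $0\too G_{\lb,1}\too \glb\too\glbp\too 0$ exhibiting $G_{\lb,1}$ as the kernel, and more relevantly the filtration realizing $\glx{1}$ as a smooth model of $\gmk$. First I would set up the local-to-global machinery: since we work with fppf sheaves over $S=\Sp(R)$ and $S$ is local, the group $\Ext^1_S(G_{\mu,1},\glx{1})$ fits into a long exact sequence coming from the resolution of $\glx{1}$ (or equivalently from applying $\Hom_S(G_{\mu,1},-)$ to an appropriate short exact sequence of smooth groups). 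The target $H^1(S,G_{\mu,1}^\vee)$ arises naturally because $\Ext^1_S(G_{\mu,1},\gmS)\simeq H^1(S,G_{\mu,1}^\vee)$ by Cartier-type duality for the Cartier dual $G_{\mu,1}^\vee$, and the restriction map to $H^1(S_\lb,G_{\mu,1}^\vee)$ records the obstruction to the extension splitting after reduction modulo $\lb$.

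Next I would identify the kernel term with $\rad$. The key computation is that an extension of $G_{\mu,1}$ by $\glx{1}$ which becomes trivial in $\Ext^1_S(G_{\mu,1},\gmS)$ — i.e.\ whose pushout along $\glx{1}\too\gmS$ vanishes — is classified by the data of a homomorphism $i_{\lb}^*G_{\mu,1}\too\mathbb{G}_{m,S_\lb}$ together with the discrete invariant $j\in\Z/p\Z$ measuring the behavior along the special fiber, subject precisely to the compatibility $F(T)^p(1+\mu T)^{-j}=1$ over $S_{\lb^p}$ and modulo the principal homomorphism $1+\mu T$. This is exactly the defining data of $\rad$. Here I would lean on the Sekiguchi-Suwa interpretation recalled in the second section, which reexpresses $\Ext^1_S(\gmu,\glb)$ in terms of Witt vectors and filtered Dieudonne-style data, allowing one to read off which classes descend to genuine finite flat models.

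I would then define $\beta$ by sending $(F,j)$ to $[\clE^{(\mu,\lb;F,j)}]$, using the explicit construction of $\clE^{(\mu,\lb;\widetilde{F},j)}$ from \S\ref{subsec:explicit description...}; the well-definedness (independence of the lift $\widetilde{F}$) is asserted there, so the main point is to verify that $\beta$ is an injective group homomorphism whose image is exactly the kernel of the restriction map. Injectivity follows because a class $[\clE^{(\mu,\lb;F,j)}]$ that splits forces $(F,j)$ to lie in the subgroup $\langle 1+\mu T\rangle$ we quotiented by, hence to be trivial in $\rad$. The final clause — that $\{[\clE^{(\mu,\lb;F,j)}]\}$ is a group isomorphic to $\rad$ — is then immediate from exactness at the left, identifying this set with $\im\beta\simeq\rad$.

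The hard part will be the middle identification: showing that the kernel of the restriction map $H^1(S,G_{\mu,1}^\vee)\too H^1(S_\lb,G_{\mu,1}^\vee)$ genuinely sits in the quotient of $\Ext^1_S(G_{\mu,1},\glx{1})$ by $\im\beta$, i.e.\ exactness on the right and the precise matching of the connecting map with the data $(F,j)$. This requires carefully tracking the Witt-vector coordinates through the Sekiguchi-Suwa exact sequences and checking that the condition $F(T)^p(1+\mu T)^{-j}=1$ over $S_{\lb^p}$ is exactly the cocycle relation produced by the boundary homomorphism; the interplay between the $p$-th power map and the two different truncations $S_\lb$ and $S_{\lb^p}$ is where the computation is most delicate, and it is essential that the hypothesis $(p-1)v(\lb),(p-1)v(\mu)\le v(p)$ guarantees the relevant models $\glx{1}$ remain well-behaved so that no extra classes appear.
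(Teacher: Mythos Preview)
Your overall architecture is the paper's: push the extension forward along $G_{\lb,1}\hookrightarrow\glb\to\gmS$ to reach $\Ext^1_S(G_{\mu,1},\gmS)\simeq H^1(S,G_{\mu,1}^\vee)$, and identify the kernel of this composite with $\rad$. But two concrete pieces are missing, and without them the argument does not close.

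First, the injectivity claim is circular as written: saying ``if $[\clE^{(\mu,\lb;F,j)}]$ splits then $(F,j)\in\langle 1+\mu T\rangle$'' is just a restatement of what you must prove. The paper's mechanism is a structural decomposition you never set up. One splits $\rad\simeq \ker(\widetilde{\psi_{\lb,1}}_*)\times \Hom_{S-gr}(G_{\mu,1},\glbp)$ via $(F,j)\mapsto (F,\,j-i_F)$, where $i_F$ is the unique exponent with $F^p(1+\mu T)^{-i_F}=1$ over $S_{\lb^p}$ (when $\lb^p\nmid\mu$; the other case is handled separately). Under this decomposition $\beta=\rho+\delta'$, where $\rho(F)=[\clE^{(\mu,\lb;F,i_F)}]$ and $\delta'$ is the connecting map from the sequence $0\to G_{\lb,1}\to\glb\to\glbp\to 0$. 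The point is that $i_*\circ\rho=\widetilde{\delta}$ (the boundary map into $\Ext^1_S(G_{\mu,1},\glb)$, already known to be injective by the analysis of $\ker\alpha^{(\lb)}_*$) while $i_*\circ\delta'=0$; combining these with the injectivity of $\delta'$ forces $\beta$ injective. Your sketch has no analogue of this splitting, and the parameter $j$ is not tied to $\Hom_{S-gr}(G_{\mu,1},\glbp)$ at all.

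Second, exactness in the middle is obtained in the paper by applying the snake lemma to the two-row diagram with rows $0\to\{\widetilde{\clE}^{(\mu,\lb;F)}\}\to\Ext^1_S(G_{\mu,1},\glb)\to\Ext^1_S(G_{\mu,1},\gmS)\to\cdots$ and vertical maps $\psi_{\lb,1*}$, using that $p_*=0$ on $\Ext^1_S(G_{\mu,1},\gmS)$. This produces $0\to\ker(\widetilde{\psi_{\lb,1}}_*)\to\ker(\psi_{\lb,1}_*)\to\ker(H^1(S,G_{\mu,1}^\vee)\to H^1(S_\lb,G_{\mu,1}^\vee))$, and then one identifies $\ker(\psi_{\lb,1}_*)$ with $\Ext^1_S(G_{\mu,1},G_{\lb,1})/\im\delta'$ via \eqref{eq:ker phi}. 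Your ``hard part'' paragraph gestures toward a cocycle computation in Witt coordinates, but the actual argument is this diagram chase; the Sekiguchi--Suwa Witt-vector description is not used in the proof of the theorem itself (it enters only later, in computing $\Phi_{\mu,\lb}$).
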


The group $\Ext^1_S(G_{\mu,1},\glx{1})$   has been described by
Greither in \cite{gre} through  a short exact sequence, different
from the one of the previous theorem. An advantage of our
description is that  we individuate a big  class of extensions.
For instance in unequal characteristic this class "essentially"
covers all the group schemes of order $p^2$. Indeed from
\ref{teo:ext1(glx,gmx)} it follows that in such a case
 any group scheme of
order $p^2$, up to an extension of  $R$, is of the form
$\clE^{(\mu,\lb;\widetilde{F},j)}$ (see \ref{rem:essenzialmente
tutti i gruppi di ordine p2}).

The following result is obtained combining the previous Theorem
and the Sekiguchi-Suwa theory.
\begin{propintro}$\ref{prop:rad_p}$
Let $\mu ,\lb \in \piR$. If $char(R)=0$ we suppose
$(p-1)v(\mu),(p-1)v(\lb)\le v(p)$. Then the group
$\{[\clE^{(\mu,\lb;F,j)}]; (F,j)\in
\rad\} $ is isomorphic to 
\begin{align*}
\Phi_{\mu,\lb}:=\bigg\{(a,j)\in (R/\lb
R)^{{\fr}-\mu^{p-1}}\times\Z/p\Z& \text{ such that }
pa-j\mu=\frac{p}{\mu^{p-1}}a^p\in R/\lb^p
R\bigg\}\bigg/\left<(\mu,0)\right>,
\end{align*}
through the morphism
$$
(a,j)\longmapsto [\clE^{(\mu,\lb;F(T),j)}],
$$
where
$F(T)=1+\sum_{i=1}^{p-1}\frac{\prod_{k=0}^{i-1}(a-k\mu)}{i!}T^i$.
\end{propintro}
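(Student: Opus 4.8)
The statement follows by combining Theorem~\ref{teo:ext1(glx,gmx)} with the Sekiguchi--Suwa description of characters recalled in \S2. By Theorem~\ref{teo:ext1(glx,gmx)} the assignment $(F,j)\mapsto[\clE^{(\mu,\lb;F,j)}]$ already identifies the group $\{[\clE^{(\mu,\lb;F,j)}];\,(F,j)\in\rad\}$ with $\rad$ itself, so it suffices to produce a group isomorphism $\rad\iso\Phi_{\mu,\lb}$ carrying $(a,j)$ to the class of $(F_a,j)$, where $F_a(T):=1+\sum_{i=1}^{p-1}\frac{\prod_{k=0}^{i-1}(a-k\mu)}{i!}T^i$. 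The plan is first to parametrize $\Hom_{S_{\lb}-gr}(\gmx{1},\gmSl)$ explicitly, then to translate the defining relation of $\rad$ and the quotient by $\langle 1+\mu T\rangle$ into the data of $\Phi_{\mu,\lb}$.

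First I would show that every character $F\in\Hom_{S_{\lb}-gr}(\gmx{1},\gmSl)$ equals $F_a$ for a unique $a\in R/\lb R$, and that $F_a$ is a genuine character precisely when $a\in(R/\lb R)^{\fr-\mu^{p-1}}$. Realizing $\gmx{1}$ as the kernel of $\gmu\to\gmup$, a character is a grouplike element $F(T)$ of its coordinate ring, i.e. a solution of $F(T_1+T_2+\mu T_1T_2)=F(T_1)F(T_2)$ subject to $(1+\mu T)^p=1$. Differentiating the functional equation yields the recursion $(i+1)c_{i+1}=(a-i\mu)c_i$ on the coefficients, whose unique normalized solution is $F_a$, while the requirement that $F_a$ descend to the finite coordinate ring produces the Artin--Schreier condition $a^p=\mu^{p-1}a$ in $R/\lb R$. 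The tautological character $1+\mu T$ corresponds to $a=\mu$, and since $F_{a_1}F_{a_2}=F_{a_1+a_2}$ modulo the relation, the bijection $a\mapsto F_a$ is an isomorphism of groups turning multiplication of characters into addition of parameters.

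The heart of the argument is the computation of the $p$-th power. Fixing a lift $\widetilde a\in R$ of $a$ and the corresponding $\widetilde F=F_{\widetilde a}$, I would compute $\widetilde F^{\,p}$ as a character of $\gmx{1}$ over $S_{\lb^p}$ and show that it equals $F_{a'}$ with $a'\equiv pa-\frac{p}{\mu^{p-1}}a^p\pmod{\lb^p}$; the Frobenius correction $\frac{p}{\mu^{p-1}}a^p$ measures the discrepancy between powering the truncated polynomial $F_a$ and truncating $(1+\mu T)^{pa/\mu}$, and is evaluated through the Witt-vector and Artin--Hasse formulas of \S2. Here the hypothesis $(p-1)v(\mu)\le v(p)$ is what makes the $p$-power character well defined over $R/\lb^p R$, while $(p-1)v(\lb)\le v(p)$ forces $\binom{p}{k}\lb^{k}\equiv 0\pmod{\lb^p}$ for $1\le k\le p-1$, so that $\widetilde F^{\,p}\bmod\lb^p$ is independent of the chosen lift, in agreement with the lift-independence already noted for $[\clE^{(\mu,\lb;F,j)}]$. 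Writing $(1+\mu T)^{-j}=F_{-j\mu}$, the relation $F_a^p(1+\mu T)^{-j}=1$ over $S_{\lb^p}$ becomes $a'-j\mu=0$, that is $pa-j\mu=\frac{p}{\mu^{p-1}}a^p$ in $R/\lb^p R$, which is exactly the defining condition of $\Phi_{\mu,\lb}$.

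Finally, since $1+\mu T=F_\mu$ corresponds to $(a,j)=(\mu,0)$, the bijection $a\mapsto F_a$ matches the subgroup $\langle 1+\mu T\rangle$ with $\langle(\mu,0)\rangle$, and passing to quotients gives the desired isomorphism $\rad\iso\Phi_{\mu,\lb}$, compatible with $(a,j)\mapsto[\clE^{(\mu,\lb;F_a,j)}]$. The main obstacle is the explicit evaluation of the $p$-th power character in the third step: pinning down the exact correction term $\frac{p}{\mu^{p-1}}a^p$ requires the delicate Sekiguchi--Suwa computations with Witt vectors over $R/\lb^p R$, and it is precisely there that the valuation hypotheses are indispensable.
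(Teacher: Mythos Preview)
Your proposal is correct and follows essentially the same route as the paper: invoke Theorem~\ref{teo:ext1(glx,gmx)} to reduce to an isomorphism $\Phi_{\mu,\lb}\simeq\rad$, parametrize $\Hom_{S_\lb\text{-}gr}(G_{\mu,1},\gmSl)$ by $(R/\lb R)^{\fr-\mu^{p-1}}$ via $a\mapsto F_a=E_p(a,\mu;T)$, and then translate the relation $F_a^p(1+\mu T)^{-j}=1$ over $S_{\lb^p}$ through the Sekiguchi--Suwa formulas. The paper carries out exactly the step you flag as the main obstacle: rather than computing ``the parameter of $F_a^p$'' directly, it rewrites the condition as the Witt-vector equation $p[a]-j[\mu]\in\im(\psi_{\mu,1}^*)$ in $\widehat{W}(R/\lb^p R)$, uses the explicit formula for $\psi_{\mu,1}^*$ together with $p[a]\equiv(pa,a^p,0,\dots)\pmod{p^2}$ and the componentwise-addition Lemma~\ref{lem:somma termine per termine}, and solves for the unique $\mathbf{b}=[a^p]$, treating $\operatorname{char}R=p$, $\operatorname{char}R=0$ with $p>2$, and $p=2$ in separate cases.
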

%

It could be possible that two different elements of
$\Ext^1_S(G_{\mu,1},G_{\lb,1})$ are isomorphic just as group
schemes.  We study when this happens, in the case of models of
$\mu_{p^2,K}$, and we prove the following theorem.
\begin{thmintro}$\ref{cor:modelli di Z/p^2 Z sono cosi'2}$
 Let $G$ be a finite and flat $R$-group
scheme such that $G_K\simeq \mu_{p^2,K}$. Then $G\simeq
\clE^{(\pi^{m},\pi^{n};\tilde{F},1)}$ for some $m,n\ge 0$,
$\tilde{F}(T)$ a lifting of
$F(T)=\sum_{k=0}^{p-1}\frac{a^k}{k!}T^k$ with $(a,1)\in
\Phi_{\pi^m,\pi^n}$. If $char(R)=0$ then $m\ge n$ and $(p-1)m\le
v(p)$, while if $char(R)=p$ then $m\ge pn$. Moreover $m,n$ and
$a\in R/\pi^{n} R$ are unique.
\end{thmintro}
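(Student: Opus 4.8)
The plan is to combine the extension description of models of order $p^2$ with the explicit computation of $\Ext^1_S(G_{\mu,1},G_{\lb,1})$, and then to upgrade mere equivalence of extensions to isomorphism of group schemes. First I would invoke Lemma \ref{lem:modelli di Z/p^2 Z sono estensioni}: since $G_K\simeq\mu_{p^2,K}$ is diagonalizable of order $p^2$, the group scheme $G$ fits into a short exact sequence $0\to G_{\lb,1}\to G\to G_{\mu,1}\to 0$, where $G_{\lb,1}$ is the unique finite flat subgroup scheme of order $p$ (the model of the inclusion $\mu_{p,K}\hookrightarrow\mu_{p^2,K}$) and $G_{\mu,1}$ is its quotient. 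By Proposition \ref{teo:modelli di Z/pZ} both the sub and the quotient are, up to isomorphism, determined by the valuations $n:=v(\lb)$ of the subgroup and $m:=v(\mu)$ of the quotient, so I may replace $\lb,\mu$ by $\pi^n,\pi^m$ and regard $[G]\in\Ext^1_S(G_{\pi^m,1},G_{\pi^n,1})$. Because the order-$p$ subgroup and the quotient are canonical, $n$ and $m$ are \emph{already} isomorphism invariants of $G$; and in characteristic $0$, applying Proposition \ref{teo:modelli di Z/pZ} to the quotient yields $(p-1)m\le v(p)$.

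Next I would place $[G]$ inside the explicit subgroup and fix the parameter $j$. Using the exact sequence of Theorem \ref{teo:ext1(glx,gmx)}, I must check that the image of $[G]$ in $\ker\big(H^1(S,G_{\mu,1}^\vee)\to H^1(S_\lb,G_{\mu,1}^\vee)\big)$ vanishes, so that $[G]$ lies in the image of $\beta$, i.e.\ $G\simeq\clE^{(\pi^m,\pi^n;F,j)}$ for some $(F,j)$; the vanishing should follow from the fact that $G$ is finite and flat with generic fibre the non-split extension $\mu_{p^2,K}$, which pins the obstruction class already over $K$. Translating through Proposition \ref{prop:rad_p}, the datum becomes a pair $(a,j)\in\Phi_{\pi^m,\pi^n}$ with $F(T)=1+\sum_{i=1}^{p-1}\frac{\prod_{k=0}^{i-1}(a-k\mu)}{i!}T^i$. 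Since $\mu_{p^2,K}$ is the \emph{cyclic} (non-split) extension of $\mu_{p,K}$ by $\mu_{p,K}$, the class $j\in\Z/p\Z$ governing the generic fibre must be nonzero, and after composing with an automorphism of $G_{\mu,1}$ I may normalize $j=1$.

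The inequalities and the stated shape of $F$ then come from the relation defining $\Phi_{\pi^m,\pi^n}$, namely $pa-j\mu=\frac{p}{\mu^{p-1}}a^p$ in $R/\lb^pR$ with $j=1$. In equal characteristic $p$ this reduces to $\mu\equiv 0\pmod{\lb^p}$, i.e.\ $m\ge pn$; in characteristic $0$ a valuation analysis of the same relation (using also $a^p\equiv\mu^{p-1}a\bmod\lb$) forces $m\ge n$. Granting $m\ge n$ we have $\mu=\pi^m\equiv 0\pmod{\lb}$, whence $\prod_{k=0}^{i-1}(a-k\mu)\equiv a^i\pmod{\lb}$ and therefore $F(T)\equiv\sum_{k=0}^{p-1}\frac{a^k}{k!}T^k\pmod{\lb}$, which is exactly the polynomial in the statement; moreover $\langle(\mu,0)\rangle$ becomes trivial in $\Phi_{\pi^m,\pi^n}$, so $a\in R/\pi^nR$ is a well-defined invariant of the extension class.

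It remains to prove uniqueness of $(m,n,a)$, and this is where the real difficulty lies. The values $m,n$ are already fixed by the canonical filtration. For $a$ I would argue that any isomorphism $G\simeq G'$ of group schemes necessarily carries the unique order-$p$ subgroup of $G$ to that of $G'$, hence induces isomorphisms $G_{\pi^n,1}\simeq G_{\pi^{n'},1}$ and $G_{\pi^m,1}\simeq G_{\pi^{m'},1}$ on sub and quotient; composing with these identifies the two extension classes up to the action of $\operatorname{Aut}(G_{\pi^n,1})\times\operatorname{Aut}(G_{\pi^m,1})$ on $\Phi_{\pi^m,\pi^n}$. The main obstacle is to describe this action precisely and to show that it only rescales the parameter $j$ by a unit of $\Z/p\Z$ (with a compatible effect on $a$); once $j$ has been pinned to $1$ this removes the remaining freedom and forces $a=a'$. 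In short, the crux is to promote equivalence of extensions to isomorphism of group schemes by controlling exactly these automorphisms — the content of the isomorphism criterion preceding the statement — which is the step requiring genuine work rather than bookkeeping.
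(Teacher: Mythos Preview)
Your overall strategy matches the paper's proof closely: reduce to an extension via Lemma~\ref{lem:modelli di Z/p^2 Z sono estensioni}, identify it inside $\Phi_{\pi^m,\pi^n}$ via Proposition~\ref{prop:rad_p}, normalize $j=1$, and invoke the isomorphism criterion Corollary~\ref{cor:iso tra modelli} for uniqueness.

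There is, however, one genuine gap. You write that the image of $[G]$ in $H^1(S,G_{\mu,1}^\vee)$ ``should follow'' to be zero because the generic fibre pins the obstruction over $K$. That intuition is right but incomplete: what makes it work is Proposition~\ref{prop:torsori schemi normali}, namely that for $S$ normal the restriction $H^1(S,G_{\mu,1}^\vee)\to H^1(\Sp K,(G_{\mu,1}^\vee)_K)$ is \emph{injective}. This is exactly the input in Corollary~\ref{cor:modelli di Z/p^2Z}: over $K$ the class of $G$ lies in $\{[\clE_{j,K}]\}\simeq\Z/p\Z$, hence maps to $0$ in $H^1(\Sp K,\Z/p\Z)$, and injectivity then forces the image in $H^1(S,G_{\mu,1}^\vee)$ to vanish. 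Without this injectivity statement you have no mechanism to descend the vanishing from $K$ to $S$, so this is the missing idea rather than mere bookkeeping.

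Two minor points of comparison. For $m\ge n$ in characteristic~$0$ the paper uses Lemma~\ref{lem:v(mu)>= v(lb)} (multiplication by $p$ on $G$ induces a nonzero map $G_{\mu,1}\to G_{\lb,1}$), which is cleaner than reading it off the $\Phi$-relation; your route via the relation is essentially Remark~\ref{rem:v(mu)<v(lb) allora j=0} and also works. For the normalization $j=1$, the paper does not ``compose with an automorphism of $G_{\mu,1}$'' abstractly: it simply divides the defining congruence $pb-j\mu\equiv\frac{p}{\mu^{p-1}}b^p$ by $j$ to see $(b/j,1)\in\Phi_{\pi^m,\pi^n}$, sets $a=b/j$, and then appeals to Corollary~\ref{cor:iso tra modelli} to get $\clE^{(\pi^m,\pi^n;\tilde G,j)}\simeq\clE^{(\pi^m,\pi^n;\tilde F,1)}$. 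This is the same idea as yours, made concrete, and Corollary~\ref{cor:iso tra modelli} is indeed exactly the ``isomorphism criterion'' you allude to at the end for uniqueness.
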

The last section is devoted to determine, through the description
of \ref{prop:rad_p}, the special fibers of the extensions
$[\clE^{(\mu,\lb;F,j)}]$.

We want to remark that sometimes we treat the cases of equal and
unequal characteristic separately. But it could be possible to
treat them always together. This relies on the fact that results
of Sekiguchi and Suwa work over $\Z_{(p)}$-algebras. However, for
reader convenience, we prefer  sometimes to  separate the two
cases;  this could also  allow the reader to appreciate how the
equal characteristic case is in fact easier.



We finally stress that  Breuil and Kisin (see \cite{breuil},
\cite{kisin}, \cite{kisin2})  have recently developed
 a theory which classifies finite and flat commutative  group schemes over a complete
 d.v.r. of unequal characteristic with perfect residue field. This classification is obtained through
 some objects of linear algebra which are very easy to study.
In the appendix of this paper  X.Caruso shows, using this theory,
how to obtain in a very efficient and elegant way a result similar
to
 \ref{cor:modelli di Z/p^2 Z sono cosi'2} (under the further assumptions $R$ complete, of unequal characteristic and $k$
 perfect).  
 Unfortunately with this purely algebraic approach one looses some important geometric aspects, namely the explicit description
 of these group
 schemes, the geometric meaning of parameters which appear in the classification  and
 the existence of a universal sequence from which one can determine any torsor under
 these group schemes.

In principle our approach could be applied, for instance, also for
the classification of models of $\mu_{p^n,K}$ with $n>2$. But the
technical problems which can arise could be very difficult. Over a
complete d.v.r.of unequal characteristic with perfect residue
field maybe the right way is just to mix the above two theories:
one can obtain the parameters which classify these models using
the very efficient theory of Breuil-Kisin and then one can try to
understand geometrically these parameters through the
Sekiguchi-Suwa theory, in  such a way to obtain explicit
descriptions of these models.

As recalled at the beginning of the introduction, if $char(R)=0$
and $R$ contains a primitive root of unity, then
$\mu_{p^2,K}\simeq
(\Z/p^2\Z)_K$. In such a case the explicit description 
of the models of $(\Z/p^2\Z)_K$ presented in this paper has been
used in \cite{io3} to study the degeneration of
$(\Z/p^2\Z)_K$-torsors from characteristic $0$ to characteristic
$p$  and in particular the problem of extension of
$(\Z/p^2\Z)_K$-torsors.

\textbf{Acknowledgments} This paper constitutes part of my PhD
thesis. It is a pleasure to thank Professor Carlo Gasbarri for his
guidance and his constant encouragement. I  begun this work during
my visit to Professor Michel Matignon in 2005 at the Department of
Mathematics of the University of Bordeaux 1. I am indebted to him
for the useful conversations  and for his great interest in my
work.  I am deeply grateful  to Matthieu Romagny for  his very
careful reading of this paper and for his several comments,
suggestions, remarks and answers to my questions which helped me
to improve the paper. I wish also to thank Professor Fabrizio
Andreatta for having suggested me the sketch of proof of
\ref{prop:torsori schemi normali}.   I thank Filippo Viviani and
Marco Antei for the stimulating and useful conversations, and
Xavier Caruso for
  his appendix to this paper. Finally I acknowledge the referee
   for providing constructive comments and help in improving the exposition.

\section{Models of $\mu_{p,K}$}

\subsection{Some group schemes of order $p$}\label{sec:glbn}

For any $\lambda\in R$ we define the group scheme
$$
\glb:=\Sp(R[T,\frac{1}{1+\lambda T}])
$$
The  $R$-group scheme  structure is given by
\begin{align*}
&T\too 1\pt T+T\pt 1 +\lb T\pt T\qquad&\text{comultiplication}\\
&T\too 0 \qquad& \text{counit}\\
&T\too -\frac{T}{1+\lb T}\qquad& \text{coinverse}
\end{align*}

We observe that if $\lb=0$ then $\glb\simeq \GaS$. It is possible
to prove that $\glb\simeq \gmu$ if and only if $v(\lambda)=v(\mu)$
and the isomorphism is given by $T\too \frac{\lambda}{\mu}T$.
Moreover it is easy to see that, if $\lb\in \pi R\setminus\{ 0\}$,
then $\glb_k\simeq \mathbb{G}_{a,k}$ and $\glb_K\simeq
\mathbb{G}_{m,K}$. When $\lb\in R\setminus\{0\}$ we can define the
morphism
\begin{align*}
\alpha^{(\lb)}:\glb\too \gmS
\end{align*}
given, on the level of Hopf algebras,  by  $T\mTo 1+\lb T$: it is
an isomorphism on the generic fiber. If $v(\lb)=0$ then
$\alpha^{(\lb)}$ is an isomorphism.

We now define some finite and flat group schemes of order $p$. If
$R$ is of unequal characteristic we will assume that $\lambda\in
R$ satisfies the condition
 $$
\qquad v(p)\ge (p-1)v(\lambda).
$$
Let $\lb \in R\setminus \{0\}$. Then the morphism
\begin{align*}
\psi_{\lb,1}:&\glb\too \g^{(\lb^{p})}\\
             &T\too P_{\lb,1}(T):=\frac{(1+\lambda T)^{p}-1}{\lambda^{p}}
\end{align*}
is an isogeny of  degree $p$. Let
$$
\glx{1}:=\Sp(R[T]/P_{\lb,1}(T))
$$
be  its kernel. It is a commutative finite flat group scheme over
$R$ of order $p$. It is easy to generalize this definition to
obtain some group schemes of order $p^n$, the so called
$G_{\lb,n}$.   We observe that $\alpha^{(\lambda)}$ is compatible
with $\psi_{\lb,1}$, i.e
 the following  diagram
\begin{equation*}
\xymatrix@1{\glb\ar[d]_{\psi_{\lb,1}}  \ar[r]^{\alpha^{(\lb)}}&\ar[d]^{p}\gmS\\
\g^{(\lb^{p})} \ar[r]^(.6){\alpha^{(\lb^{p})}}&\gmS}
\end{equation*}
is commutative.
 Then it   induces a  morphism
$$
\alpha^{(\lb)}:\glx{1}\too\mu_{p,S}
$$
which is an isomorphism on the generic fiber. And if $v(\lb)=0$
 it  is an isomorphism. Moreover it is possible to prove that
\begin{align*} 
{(G_{\lb,1})}_k\simeq \alpha_{p,k} &\qquad\text{ if $char(R)=0$ and  $v(p)>(p-1)v(\lb)>0$, or $char(R)=p$ and $v(\lb)>0$};\\
G_{\lb,1} \quad \text{étale} &\qquad \text{ if $char(R)=0$ and
$(p-1)v(\lb)=v(p)$}.
\end{align*}
We also recall that
\begin{equation}\label{eq:Hom(glbn,glmn)}
 \Hom_{S-gr}(G_{\lb,1},G_{\lb',1})\simeq \Hom_{S-gr}(G_{\lb,1},{\clG^{(\lb')}})\simeq %
\left\{\begin{array}{ll}%
        \Z/p\Z,& \text{ if } v(\lb)\ge v(\lb') \\
        0,  &  \text{ if } v(\lb)< v(\lb').\\
\end{array}%
\right.
\end{equation}
If $v(\lb)\ge v(\lb')$ the morphisms are given by
\begin{align*}
\sigma_j: G_{\lb,1}&\too G_{\lb',1}\\
 T&\longmapsto \frac{(1+\lb T)^{j}-1}{\lb'}.
\end{align*}
 It  follows easily that
$G_{\lb,1}\simeq G_{\lb',1}$ if and only if $v(\lb)=v(\lb')$.

 In the following, if $char(R)=0$,  any time we will speak
about $G_{\lb,1}$  it will be assumed that $(p-1)v(\lb)\le v(p)$.
If  $R$ contains a distinguished primitive $p$-{th} root of unity
$\zeta_1$  then, since \mbox{$v(p)=(p-1)v(\lb_{(1)}),$} the above
condition  is equivalent to $v(\lb)\le v(\lb_{(1)})$. We recall
that by definition $\lb_{(1)}:=\zeta_1-1$.
\subsection{Classification of models of ${\mu_{p,K}}$.}

\begin{defn}
Let $H$ be a group scheme over $K$. Any  flat $R$-group scheme $G$
such that $G_K\simeq H$ is called a \textsl{model} of $H$. If $H$
is finite over $K$ we also require $G$ finite over $R$.
\end{defn}
\begin{rem}It has been proved by Waterhouse and Weisfeiler, in
\cite[2.5]{Wat}, that
 any smooth model of $\gmK$ with connected fibers is isomorphic  to $\glb$ for some $\lb\in R\setminus\{0\}$.
\end{rem}

It follows from \cite[2.4]{Wat} that 
$\mu_{p,S}$ is the minimal model for $\mu_{p,K}$, i.e. for any
model $G$ of ${\mu_{p,K}}$ we have a model map, namely a morphism
which is
an isomorphism on the generic fiber, 
$$
G\too \mu_{p,S}.
$$
We now recall the classification of $\mu_{p,K}$-models, which is
due to Waterhouse-Weisfeiler (\cite[\S 2]{Wat}).
\begin{prop}\label{teo:modelli di Z/pZ}
If $G$ is a finite and flat  $R$-group scheme such that $G_K\simeq
\mu_{p,K}$ then $G\simeq G_{\lb,1}$ for some $\lb\in
R\setminus\{0\}$.
\end{prop}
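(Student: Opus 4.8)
The plan is to realize any finite flat model $G$ of $\mu_{p,K}$ as the kernel of an isogeny between two smooth one-dimensional $R$-group schemes, and then to identify it with one of the $G_{\lb,1}$ by choosing $\lb$ correctly. First I would use the model map $G \too \mu_{p,S}$ coming from the minimality of $\mu_{p,S}$ (quoted from \cite[2.4]{Wat}), together with the universal Kummer isogeny $\gmS \on{p}{\too} \gmS$ whose kernel is $\mu_{p,S}$. The idea is to pull this back along a suitable smooth model of $\gmK$ so as to land on $G$. By the Waterhouse--Weisfeiler structure theorem quoted in the remark, every smooth connected model of $\gmK$ is some $\glb$, so the candidate isogenies are exactly the $\psi_{\lb,1}\colon \glb \too \g^{(\lb^{p})}$ constructed above, whose kernels are the $\glx{1}$. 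Thus the whole problem reduces to showing that the given $G$ matches the kernel of one of these standard isogenies.

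The key steps, in order, are as follows. Step one: attach to $G$ a smooth affine model $\clG$ of $\gmK$ through which the Kummer structure propagates; concretely I would take the affine algebra of $G$, present it as a quotient $R[T]/(f(T))$ of a smooth one-dimensional Hopf algebra, and use that the generic fiber $\mu_{p,K}$ is the kernel of raising to the $p$-th power. Step two: normalize the coordinate. Since $G_K \simeq \mu_{p,K}$ there is a generic coordinate in which the comultiplication is the multiplicative one; I would clear denominators to obtain an $R$-integral coordinate $T$ with comultiplication $T \mapsto 1 \pt T + T \pt 1 + \lb\, T \pt T$ for a well-chosen $\lb \in R\setminus\{0\}$, which is exactly the Hopf-algebra law defining $\glb$. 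The scalar $\lb$ is forced, up to equal valuation, by how far the integral structure degenerates from the multiplicative law on the special fiber. Step three: check that the $p$-power isogeny, written in this coordinate, is precisely $P_{\lb,1}(T) = \frac{(1+\lb T)^p - 1}{\lb^p}$, so that $G = \Sp\big(R[T]/P_{\lb,1}(T)\big) = \glx{1}$. In unequal characteristic the constraint $(p-1)v(\lb) \le v(p)$ will drop out automatically from the requirement that $P_{\lb,1}$ have $R$-integral coefficients and that $G$ be flat of order $p$.

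I expect the main obstacle to be step two: choosing the integral coordinate and the scalar $\lb$ so that the comultiplication takes exactly the form $1 \pt T + T \pt 1 + \lb\, T\pt T$. A priori the model map $G \too \mu_{p,S}$ only controls the generic behaviour, and different $R$-lattices in the generic Hopf algebra could give coordinates differing by a unit; one must show that the valuation of the resulting $\lb$ is an intrinsic invariant of $G$ and that any admissible choice yields an isomorphic group scheme. The isomorphism criterion $G_{\lb,1} \simeq G_{\lb',1} \Leftrightarrow v(\lb) = v(\lb')$, recorded after \eqref{eq:Hom(glbn,glmn)}, is exactly what guarantees that this choice is well defined up to isomorphism, so I would lean on it to close the argument. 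The remaining verifications (flatness, order $p$, that the coinverse is consistent) are routine Hopf-algebra computations once the coordinate is fixed.
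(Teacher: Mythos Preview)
Your proposal takes a different route from the paper, and the central step has a genuine gap.

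The paper does not construct a coordinate on $G$ directly. Instead, starting from the model map $\phi\colon G \to \mu_{p,S}$, it invokes \cite[1.4]{Wat} to factor $\phi$ as a finite composition of N\'eron blow-ups. One then checks that the only closed subgroup of $(G_{\mu,1})_k$ giving a nontrivial blow-up is the identity $e$, and that blowing up $G_{\mu,1}$ at $e$ produces $G_{\mu\pi,1}$ (provided $G_{\mu,1}$ is not \'etale; in the \'etale case the blow-up is no longer finite). Induction on the number of blow-ups gives $G \simeq G_{\lb,1}$ with $v(\lb)$ equal to that number.

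Your approach hinges on exhibiting a single element $T \in R[G]$ that generates $R[G]$ as an $R$-algebra and has comultiplication $T \mapsto 1 \pt T + T \pt 1 + \lb\, T \pt T$. Both your step one (presenting $R[G]$ as $R[T]/(f(T))$) and step two (normalizing the coordinate) presuppose this monogenicity. Setting $T = (S-1)/\lb$ for the largest $\lb$ with $T \in R[G]$ does give the right comultiplication, but nothing you have written forces $R[T]$ to equal $R[G]$: a priori the $R$-subalgebra generated by $T$ could be a strictly smaller Hopf order. Closing this gap is essentially the content of the Tate--Oort classification of order-$p$ group schemes, which proceeds via the $(\Z/p\Z)^\times$-eigenspace decomposition of the augmentation ideal rather than by clearing denominators. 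You correctly flag step two as the obstacle, but the sketch does not resolve it. The paper's blow-up argument bypasses the difficulty: each blow-up is an explicit affine construction that visibly yields the next $G_{\mu\pi,1}$, so no monogenicity needs to be established in advance.
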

\begin{proof}
As  remarked  above we have an $R$-model map $ \phi:G\too
\mu_{p,S}. $ By \cite[1.4]{Wat} it is a composition of a finite
number of Néron blow-ups. Now, let us consider the group scheme
$G_{\mu,1}$. The trivial subgroup scheme $H=e$ is the unique
subgroup $H$ of $(G_{\mu,1})_k$ which gives a nontrivial blow-up.
If $G_{\mu,1}$ is not étale, which means $char(R)=p$ or
$char(R)=0$ and $(p-1)v(\mu)<v(p)$, it is easy to see that the
Néron blow up of $G_{\mu,1}$ at $e$ is $ G_{\mu\pi,1}$. In the
étale case we obtain a not finite group scheme.  So $G\simeq
G_{\lb,1}$ with $v(\lb)$ equal to the number of (nontrivial)
blow-ups from $\mu_{p,S}$.
\end{proof}


We observe that if $char(R)=0$ (resp. $char(R)=p$) there are a
finite (resp. infinite) number  of models of $\mu_{p,K}$.

\section{Preliminaries}\label{par:review of results}
The main point to classify models of $\mu_{p^2,K}$ will be the
study of extensions of $G_{\mu,1}$ by $G_{\lb,1}$. As we will see
in \ref{lem:modelli di Z/p^2 Z sono estensioni} any such an
extension is abelian. Therefore, for simplicity,  we restrict to
study abelian extensions. Hence in the following, for any  scheme
$X$ and $G,H$ flat commutative group schemes over $X$, we define
$\Ext_X^*(G,H)$ as the right derived functor of $H\longmapsto
\Hom_{X-gr}(G,H)$ on the abelian fppf-sheaves over $X$ (see
\cite{Br}). We remark that if $H$ is affine over $X$ then any
element of $\Ext_X^1(G,H)$ is representable by a flat commutative
group scheme over $X$ (see \cite[III 17.4]{Oo}).

In this section we recall some results about some groups of
extensions  which will play a key role in the next section.

\subsection{Sekiguchi-Suwa Theory}\label{sec:Sekiguchi-Suwa theory}


We here briefly recall some results about $\Ext^1_S(\gmu,\glb)$
with $\mu,\lb \in R\setminus\{0\}$.   We now distinguish four
cases:
$$
\begin{array}{ll}
     \Ext^1_S(\gmS,\gmS)   ; \\
     \Ext^1_S(\gmu,\gmS)  , &   \text{ with } \mu\in \pi R\setminus\{0\}; \\
     \Ext^1_S(\gmS,\glb)  , &   \text{ with } \lb\in \pi R\setminus\{0\}; \\
     \Ext^1_S(\gmu,\glb) , &   \text{ with } \mu,\lb\in \pi
     R\setminus\{0\}.
\end{array}%
$$
The first three groups are trivial, see for instance \cite[I 2.7,
II 1.4]{SS6}. The  fourth case is more subtle and it has been
studied in
 \cite{SS9} and
\cite{SS4}. We now suppose $\mu,\lb\in \pi R\setminus\{0\}$. For
any $\lb'\in \pi R\setminus\{0\}$  set $S_{\lb'}=\Sp(R/\lb' R)$.
 Let us now consider the exact sequence on the fppf site  over $S$
\begin{equation}\label{eq:succ esatta glb in gm}
0\too \glb\on{\alpha^{(\lb)}}{\too}{\gmS}\too
i_{\lb}^*{{\mathbb{G}}_{m,S_{\lb}}}\too 0,
\end{equation}
 where $i_{\lb}$ denotes the closed immersion $S_\lb:=\Sp(R/\lb R)\ha S$
(see \cite[II 1.1]{SOS}). We observe that by definitions we have
that
$$
\Hom_{S_{\lb}-gr}(\gmu,{\gmSl})=\{F(T)\in (R/\lb
R[T])^*|F(U)F(V)=F(U+V+\mu UV) \}.
$$
Here there is the abuse of notation we mentioned in the
introduction. Indeed if it is clear from the context, for a group
scheme $G$ over $R$  we sometimes simply write $G$ to indicate its
restriction over $S_\lb$, instead of writing $i_{\lb}^* G$.

 If
we apply the functor $\Hom_{S-gr}(\gmu,\cdot)$ to the sequence
\eqref{eq:succ esatta glb in gm} we obtain, in particular,  a
 morphism  
\begin{equation*}
\begin{aligned}
\Hom_{S_\lb-gr}(\gmu,{\gmSl})
\on{\alpha}{\too}\Ext^1_S(\gmu&,\glb),
\end{aligned}
\end{equation*}
 given by
$$
F\longmapsto [\clE^{(\mu,\lb;\widetilde{F})}],
$$
where $\widetilde{F}(T)\in R[T]$ is a lifting of $F(T)$ and $
\clE^{(\mu,\lb;\widetilde{F})} $ is an $R$-smooth affine
commutative group defined as follows:
$$
\clE^{(\mu,\lb;\widetilde{F})}:= \Sp(R[T_1,T_2,\frac{1}{1+\mu
T_1},\frac{1}{\widetilde{F}(T_1)+\lb T_2}])
$$
\begin{enumerate}
    \item  comultiplication
    \begin{align*}
    T_1\longmapsto &T_1\pt 1+1\pt T_1+\mu T_1\pt T_1\\
    T_2\longmapsto &T_2\pt \widetilde{F}(T_1)+\widetilde{F}(T_1)\pt T_2 +\lb T_2\pt T_2+\\
                   & \qquad   \quad    \frac{\widetilde{F}(T_1)\pt \widetilde{F}(T_1)-\widetilde{F}(T_1\pt 1+1\pt T_1+\mu T_1\pt T_1)}{\lb}
    \end{align*}
    \item counit
    \begin{align*}
    &T_1\longmapsto 0\\
    &T_2\longmapsto \frac{1-\widetilde{F}(0)}{\lb}
    \end{align*}
    \item coinverse
    \begin{align*}
    &T_1\longmapsto -\frac{T_1}{1+\mu T_1}\\
& T_2 \longmapsto\frac{\frac{1}{\widetilde{F}(T_1)+\lb
T_2}-\widetilde{F}(-\frac{T_1}{1+\mu T_1})}{\lb}
\end{align*}
\end{enumerate}
We moreover  define the following  homomorphisms of group schemes
$$
\glb=\Sp(R[T,(1+\lb T)^{-1}])\too \clE^{(\mu,\lb;\widetilde{F})}
$$
by
\begin{align*}
&T_1\longmapsto 0\\
&T_2\longmapsto T +\frac{1-\widetilde{F}(0)}{\lb}
\end{align*}
and
$$
\clE^{(\mu,\lb;\widetilde{F})}\too \gmu=\Sp(R[T,\frac{1}{1+\mu
T}])
$$
 by
\begin{align*}
&T\too T_1.
\end{align*}
It is easy to see that
\begin{equation*}
0\too \glb\too \clE^{(\mu,\lb;\widetilde{F})}\too \gmu\too 0
\end{equation*}
is exact. A different choice of the lifting $\widetilde{F}(T)$
gives an isomorphic extension.  We recall the following theorem.
\begin{thm}\label{teo:ss ext1}For any $\mu,\lb\in \pi R\setminus\{0\}$, the map $$\alpha:\Hom_{S_\lb-gr}(\gmu,{\gmSl}){\too}
\Ext^1_S(\gmu,\glb)$$  is a surjective morphism of groups. And
$\ker(\alpha)$ is generated by the class of $1+\mu T$.
\end{thm}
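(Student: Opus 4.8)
The plan is to deduce the statement from the long exact sequence obtained by applying the left exact functor $\Hom_{S-gr}(\gmu,\cdot)$, on the category of abelian fppf sheaves over $S$, to the short exact sequence \eqref{eq:succ esatta glb in gm}. Writing $r$ for the reduction map and using the identification $\Hom_{S-gr}(\gmu,i_{\lb}^*\gmSl)\iso\Hom_{S_{\lb}-gr}(\gmu,\gmSl)$ that comes from the adjunction for the closed immersion $i_{\lb}$ (together with the abuse of notation writing $\gmu$ for its restriction to $S_{\lb}$), and recalling that the connecting homomorphism is by construction the map $\alpha$, this long exact sequence reads
\begin{equation*}
\Hom_{S-gr}(\gmu,\gmS)\on{r}{\too}\Hom_{S_{\lb}-gr}(\gmu,\gmSl)\on{\alpha}{\too}\Ext^1_S(\gmu,\glb)\too \Ext^1_S(\gmu,\gmS).
\end{equation*}
In particular $\alpha$ is automatically a homomorphism of abelian groups, with $\ker(\alpha)=\im(r)$ by exactness at the middle term.

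For surjectivity I would invoke the vanishing of $\Ext^1_S(\gmu,\gmS)$: this is precisely the second of the four cases recalled above, which is trivial by \cite[I 2.7, II 1.4]{SS6}. Exactness of the displayed sequence at $\Ext^1_S(\gmu,\glb)$ then forces $\alpha$ to be surjective, giving the first assertion.

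It remains to identify $\ker(\alpha)=\im(r)$, for which I would compute $\Hom_{S-gr}(\gmu,\gmS)$ directly. Such a homomorphism is a grouplike unit $F(T)\in R[T,(1+\mu T)^{-1}]^*$ with $F(U)F(V)=F(U+V+\mu UV)$. Since $\gmu$ is integral, hence flat, over $R$, restriction to the generic fiber injects $\Hom_{S-gr}(\gmu,\gmS)$ into $\Hom_{K-gr}(\gmu_K,\gmK)\simeq\Z$, the last identification coming from the isomorphism $\alpha^{(\mu)}:\gmu_K\iso\gmK$. The character $T\mapsto 1+\mu T$ is an honest $R$-homomorphism $\gmu\too\gmS$, namely the integral model $\alpha^{(\mu)}$, and it maps to a generator of $\Z$; therefore $\Hom_{S-gr}(\gmu,\gmS)=\Z\cdot(1+\mu T)$. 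Applying $r$, its image in $\Hom_{S_{\lb}-gr}(\gmu,\gmSl)$ is the cyclic subgroup generated by the class of $1+\mu T$ modulo $\lb$, which is exactly $\ker(\alpha)$.

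The main obstacle is not the formal homological argument but pinning down its two inputs. The first is the vanishing of $\Ext^1_S(\gmu,\gmS)$, which I would simply quote from Sekiguchi--Suwa. The second, more delicate, point is the verification that the abstract connecting map of the long exact sequence really coincides with the explicit assignment $F\mapsto[\clE^{(\mu,\lb;\widetilde{F})}]$ through the group scheme written out just before the statement; granting that identification (already recorded in the text preceding the theorem) and the adjunction for $i_{\lb}$, everything else reduces to the elementary computation of $\Hom_{S-gr}(\gmu,\gmS)$ above.
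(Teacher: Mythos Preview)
Your argument is correct and is exactly the natural one given how the paper itself introduces $\alpha$ as the connecting homomorphism of the long exact sequence attached to \eqref{eq:succ esatta glb in gm}. The paper does not give its own proof here but simply cites \cite[\S 3]{SS9}; your proposal spells out what that reference contains, using the same inputs (the vanishing $\Ext^1_S(\gmu,\gmS)=0$ and the computation $\Hom_{S-gr}(\gmu,\gmS)=\Z\cdot(1+\mu T)$).
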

\begin{proof}
\cite[\S 3]{SS9}.
\end{proof}
\begin{rem}
The group scheme $\clE^{(\mu,\lb;1)}$ is well defined also in the
case
 $v(\mu)=0$ or $v(\lb)=0$ and it is isomorphic to $\gmu\times \glb$.
\end{rem}
 \noindent We now define some spaces  used by Sekiguchi and Suwa to describe
 $\Hom_{S_\lb-gr}(\gmu,{\gmSl})$. 
 See \cite{SS4} for details.
\begin{defn}
For any ring $A$, let $W(A)$ be  the ring of infinite Witt
vectors. We define
$$
\widehat{W}(A)=\left\{(a_0,\dots,a_n,\dots)\in W(A)\;;
\begin{array}{c} \text{$\ a_i$ is
nilpotent for any $i$  and } \\[1mm]
 \text{$a_i=0$ for all but a finite number of $i$}
\end{array}\right\}.
$$
\end{defn}

We recall the definition of the so-called Witt-polynomial: for any
$r\ge 0$ it is
$$
\Phi_r(T_0,\dots,T_r)=T_0^{p^r}+pT_{1}^{p^{r-1}}+\dots+p^rT_r.
$$
Then  the following maps are defined:
\begin{itemize}
    \item[-]Verschiebung
    \begin{align*}
    V:W(A)&\too W(A)\\
(a_0,\dots,a_n,\dots)&\longmapsto (0,a_0,\dots,a_n,\dots)
    \end{align*}
    \item[-]Frobenius
 \begin{align*}
    \fr:W(A)&\too W(A)\\
(a_0,\dots,a_n,\dots)&\longmapsto
(F_0(\mathbf{T}),F_1(\mathbf{T}),\dots,F_n(\mathbf{T}),\dots),
    \end{align*}
where the polynomials $F_r(\mathbf{T})=F_r(T_0,\dots,T_r)\in
\Q[T_0,\dots,T_{r+1}]$ are defined inductively by
$$
\Phi_r(F_0(\mathbf{T}),F_1(\mathbf{T}),\dots,F_r(\mathbf{T}))=\Phi_{r+1}(T_0,\dots,T_{r+1}).
$$
\end{itemize}
If $p=0\in A$ then $\fr$ is the usual Frobenius. The ideal
$\widehat{W}(A)$  is stable with respect to these maps.

For any morphism $G: \widehat{W}(A)\too \widehat{W}(A)$ we will
set $\widehat{W}(A)^G:=\ker G$. And for any $a\in A$ we denote
 the element $(a,0,0,\dots,0,\dots)\in W(A)$ by $[a]$. It is
 called the Teichm\"{u}ller representant of $a$.
The following lemma  will be useful later.
\begin{lem}\label{lem:somma termine per termine} 
Let $\lb \in \pi R\setminus\{0\}$. If $char(R)=0$ we suppose $
(p-1)v(\lb)\le pv(p)$. For any
$\bbf{a}=(a_0,a_1,\dots),\bbf{b}=(b_0,b_1,\dots)\in
\widehat{W}(R/\lb R)^{\fr}$ then
$$
\bbf{a}+\bbf{b}=(a_0+b_0,a_1+b_1,\dots, a_i+b_i,\dots)
$$
\end{lem}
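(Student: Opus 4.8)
The plan is to reduce the statement to a single valuation estimate on the components of $\bbf{a}$ and $\bbf{b}$, and then to establish that estimate, the easy way in equal characteristic and through a Frobenius recursion in the mixed case.

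First I would use the shape of the Witt addition law. The $n$-th component of $\bbf{a}+\bbf{b}$ is $S_n(\bbf{a};\bbf{b})=a_n+b_n+C_n$, where $C_n\in\Z[a_0,\dots,a_{n-1},b_0,\dots,b_{n-1}]$ is isobaric of weight $p^n$ for the weighting $\operatorname{wt}(a_i)=\operatorname{wt}(b_i)=p^i$. Since every variable occurring in $C_n$ has weight at most $p^{n-1}$, each monomial of $C_n$ has total degree $\ge p$. Hence it suffices to prove that $v(a_i),v(b_i)\ge v(\lb)/p$ for every $i$ (valuations taken on lifts to $R$): granting this, every monomial of $C_n$ has valuation $\ge p\cdot v(\lb)/p=v(\lb)$, so $C_n(\bbf{a};\bbf{b})=0$ in $R/\lb R$ and the sum is indeed componentwise. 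Thus the whole lemma reduces to the implication
\[
\fr(\bbf{a})=0 \text{ in } \widehat{W}(R/\lb R)\ \Longrightarrow\ v(a_i)\ge v(\lb)/p\ \text{ for all } i.
\]

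In the case $\mathrm{char}(R)=p$ this implication is immediate: there $\fr$ is the componentwise $p$-power map, so $\fr(\bbf{a})=0$ means $a_i^{p}=0$ in $R/\lb R$, i.e. $v(a_i^{p})\ge v(\lb)$, which is exactly $v(a_i)\ge v(\lb)/p$. No condition on $\lb$ is needed here, which explains why the hypothesis $(p-1)v(\lb)\le p\,v(p)$ is only imposed when $\mathrm{char}(R)=0$.

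For $\mathrm{char}(R)=0$ I would first extract a recursion from the vanishing of the Frobenius components. Writing $F_i$ for the $i$-th Frobenius polynomial, one has $F_i=T_i^{p}+pT_{i+1}+R_i$, where $R_i$ is isobaric of weight $p^{i+1}$ in $T_0,\dots,T_i$, is divisible by $p$, and has all monomials of total degree $\ge p$. The key step is to prove, by induction on $i$, the relation $a_i^{p}=u_i\,p\,a_{i+1}$ in $R/\lb R$ with $u_i$ a unit. For $i=0$ this is exactly $F_0(\bbf{a})=a_0^{p}+p a_1=0$. For the inductive step I would substitute the relations $a_j^{p}=u_j\,p\,a_{j+1}$ already obtained for $j<i$ into $R_i$; since the weight-$p^{i+1}$ condition forces the exponent of $a_0$ to be divisible by $p$, then (after that substitution) the accumulated exponent of $a_1$ to be divisible by $p$, and so on up to $a_i$, this is a base-$p$ ``carrying'' process that collapses $R_i$ to $p\,m\,a_i^{p}$ for some $m\in R/\lb R$. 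Feeding this back into $F_i(\bbf{a})=0$ gives $a_i^{p}(1+pm)=-p\,a_{i+1}$ with $1+pm$ a unit, whence the claimed relation. Carrying out and rigorously justifying this collapse for all $i$ is the main obstacle of the proof.

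Finally I would deduce the required valuation bound from the recursion by descending induction, using that $\bbf{a}\in\widehat{W}$ has finite support, say $a_{N+1}=0$. The relation at $i=N$ then gives $a_N^{p}=u_N\,p\,a_{N+1}=0$, i.e. $v(a_N)\ge v(\lb)/p$; and if $v(a_{i+1})\ge v(\lb)/p$, then $v(a_i^{p})\ge v(p)+v(a_{i+1})\ge v(p)+v(\lb)/p$, which is $\ge v(\lb)$ precisely because $(p-1)v(\lb)\le p\,v(p)$. Hence $v(a_i)\ge v(\lb)/p$ for all $i$, and symmetrically for $\bbf{b}$; combined with the reduction of the first paragraph this yields $C_n(\bbf{a};\bbf{b})=0$ for all $n$ and therefore the componentwise formula.
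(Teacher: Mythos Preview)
Your proof is correct, and the reduction step together with the $\mathrm{char}(R)=p$ case coincide exactly with the paper's argument. In the $\mathrm{char}(R)=0$ case, however, you take a genuinely different route. The paper simply invokes \cite[1.2.1]{SS4} to obtain, for each component $\widetilde{a}_k$, the estimate $p\,v(\widetilde{a}_k)\ge\min\{v(p)+v(\widetilde{a}_k),\,v(\lb)\}$, and then deduces $p\,v(\widetilde{a}_k)\ge v(\lb)$ from the hypothesis $(p-1)v(\lb)\le p\,v(p)$ by a two-line case split. You instead rebuild this estimate from scratch: first you prove inductively that $a_i^{\,p}=u_i\,p\,a_{i+1}$ in $R/\lb R$ (your ``carrying'' collapse of the higher Frobenius polynomials), and then run a descending induction from the finite support of $\bbf{a}$. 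Your carrying argument is valid: isobaricity of weight $p^{i+1}$ forces $p\mid e_0$, and after each substitution the remaining weight equation again forces divisibility by $p$ of the next exponent, so every monomial of $R_i$ collapses to a scalar times $a_i^{\,p}$; since $R_i\in p\,\Z[T_0,\dots,T_i]$ (because $F_i\equiv T_i^{\,p}\pmod p$), the scalar is divisible by $p$, giving $R_i(\bbf{a})=p\,m\,a_i^{\,p}$ and hence the recursion. Your descending step uses the hypothesis $(p-1)v(\lb)\le p\,v(p)$ in exactly the same way the paper does. The trade-off: the paper's proof is two lines plus a citation, while yours is self-contained but requires the bookkeeping you flag as ``the main obstacle.'' If you want a polished write-up, you could either cite the Sekiguchi--Suwa estimate directly, or keep your argument but spell out once and for all that $R_i\equiv 0\pmod p$ and that the collapse terminates with exponent $p$ on $a_i$.
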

\begin{proof}
We suppose that $\bbf{a}+\bbf{b}=(c_0,c_1,\dots,c_i,\dots)$. If
$U_i$ and $V_i$ have weight $p^i$ then
 we have that $c_r(\mathbf{U},\mathbf{V})$ is isobaric
of weight $p^r$. By definition of sum between Witt vectors we have
$$c_r(\bbf{a},\bbf{b})=a_r+b_r+
c'_r((a_0,a_1,\dots,a_{r-1}),(b_0,b_1,\dots,b_{r-1}))$$ for some
polynomial $c'_r(U_0,\dots, U_{r-1},V_0,\dots,V_{r-1})$ isobaric
of weight $p^r$. Hence $\deg (c'_r)\ge p$.

Let
$\widetilde{\bbf{a}}=(\widetilde{a}_0,\widetilde{a}_1,\dots)$,$\widetilde{\bbf{b}}=(\widetilde{b}_0,
\widetilde{b}_1,\dots)\in W(R)$ be liftings of $\bbf{a}$ and
$\bbf{b}$, respectively. For any $r\ge 1$, up to changing
$\widetilde{\bbf{a}}$ with $\widetilde{\bbf{b}}$, we can suppose
that
$v(\widetilde{a}_k)=\min\{v(\widetilde{a}_i),v(\widetilde{b}_i)|
i=0,\dots,r-1\}$, for some $0\le k \le r-1$. Since $\deg c'_r\ge
p$ then $v( c'_r(\widetilde{\bbf{a}},\widetilde{\bbf{{b}}}))\ge
pv(\widetilde{a}_k)$. We claim that, since $\fr(\bbf{a})=0$, then
$pv(\widetilde{a}_k)\ge v(\lb)$. If $char(R)=p$ this is clear. We
now suppose $char(R)=0$. From \cite[1.2.1]{SS4} we have that
$pv(\widetilde{a}_k)\ge \min \{v(p)+v(\widetilde{a}_k),v(\lb)\}$.
This implies, by hypothesis on $v(\lb)$, $pv(\widetilde{a}_k)\ge
v(\lb)$. Hence $c'_r(\bbf{a},\bbf{b})=0\in R/\lb R$.  So
$$
\bbf{a}+\bbf{b}=(a_0+b_0,a_1+b_1,\dots, a_i+b_i,\dots)
$$
\end{proof}
  We now recall the definition of the
Artin-Hasse exponential series
$$
E_p(T):=\exp\bigg(\sum_{r\ge
0}\frac{T^{p^r}}{p^r}\bigg)=\prod_{r=0}^{\infty}\exp\bigg(\frac{T^{p^r}}{p^r}\bigg)\in
\Z_{(p)}[[T]].
$$
Sekiguchi and Suwa introduced  a deformation of the Artin-Hasse
exponential map in \cite{SS4}. 
They defined $E_p(U,\Lambda;T)\in \Q[U,\Lambda][[T]]$ by
$$
E_p(U,\Lambda;T):=(1+\Lambda
T)^{\frac{U}{\Lambda}}\prod_{r=1}^{\infty}(1+\Lambda^{p^r}T^{p^r})^{\frac{1}{p^r}((\frac{U}{\Lambda})^{p^r}-(\frac{U}{\Lambda})^{p^{r-1}})}
$$
They proved that $E_p(U,\Lambda;T)$ has in fact its coefficients
in $\Z_{(p)}[U,\Lambda]$. It is possible to show (\cite[2.4]{SS4})
that
\begin{equation}\label{eq:E_p(a,mu;T)}E_p(U,\Lambda;T)=\left\{%
\begin{array}{ll}
    \prod_{(i,p)=1}E_p(U\Lambda^{i-1}T^i)^\frac{(-1)^{i-1}}{i}, & \hbox{if $p>2$;} \\
    \prod_{(i,2)=1}E_p(U\Lambda^{i-1}T^i)^\frac{1}{i}\bigg[\prod_{(i,2)=1}E_p(U\Lambda^{2i-1}T^{2i})^\frac{1}{i}\bigg]^{-1}, & \hbox{if $p=2$.} \\
\end{array}%
\right.
\end{equation}
Let $A$ be a $\Z_{(p)}$-algebra and $a,\mu\in A$. We define
$E_p(a,\mu;T)$ as $E_p(U,\Lambda;T)$ evaluated at $U=a$ and
$\Lambda=\mu$.

\vspace{.5cm}

If \textbf{\textit{a}}$=(a_0,a_1,a_2,\dots)\in W(A)$ we define the
formal power series
\begin{equation*}
E_p(\textbf{\textit{a}},\mu;T)=\prod_{k=0}^\infty
E_p(a_k,\mu^{p^k};T^{p^k}).
\end{equation*}
We stress that if $\mu$ is nilpotent in $A$ then $\bbf{a}\in
\widehat{W}(A)$ if and only if  $E_p(\textbf{\textit{a}},\mu;T)$
is a polynomial (see \cite[2.11]{SS4}). The following  result
gives an explicit description of
 $\Hom_{S_{\lb}-gr}(\gmu,{\mathbb{G}_{m,S_{\lb}}})$.

\begin{thm}\label{teo:ss hom} Let $\mu,\lb \in \pi R\setminus\{0\}$. 
The homomorphism
\begin{align*}
\xi^0_{R/\lb R}:\widehat{W}(R/\lb R)^{\fr-[\mu^{p-1}]}&\too \Hom_{S_{\lb}-gr}(\gmu,{\mathbb{G}_{m,S_{\lb}}})\\
\textbf{a}&\longmapsto E_p(\textbf{a},\mu;T)
\end{align*}
is bijective.
\end{thm}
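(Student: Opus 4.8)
The plan is to check in turn that $\xi^0_{R/\lb R}$ is well defined, that it is a homomorphism, and that it is injective and surjective, the last point being the only serious one. For well-definedness, fix $\mathbf{a}\in\widehat{W}(R/\lb R)$ with $\fr(\mathbf{a})=[\mu^{p-1}]\mathbf{a}$. Since $v(\mu),v(\lb)\ge 1$ the element $\mu$ is nilpotent in $R/\lb R$, and as $\mathbf{a}$ has nilpotent entries and finite support, $E_p(\mathbf{a},\mu;T)=\prod_{k\ge 0}E_p(a_k,\mu^{p^k};T^{p^k})$ is a polynomial by \cite[2.11]{SS4}; its constant term is $1$ and, by the product expansion \eqref{eq:E_p(a,mu;T)}, all higher coefficients lie in the nilradical, so it is a unit in $R/\lb R[T]$. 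The substantive point is that this polynomial satisfies $F(U)F(V)=F(U+V+\mu UV)$ — equivalently defines an element of $\Hom_{S_\lb-gr}(\gmu,\gmSl)$ — exactly because $\fr(\mathbf{a})=[\mu^{p-1}]\mathbf{a}$. Writing $c=a/\mu$ formally, the leading factor $(1+\mu T)^{c}$ already satisfies this equation because $(1+\mu U)(1+\mu V)=1+\mu(U+V+\mu UV)$; I would then show, from the defining identities for $E_p(U,\Lambda;T)$ in \cite{SS4}, that the higher factors indexed by $r\ge 1$ assemble over $R/\lb R$ into a multiplicative correction compatible with the group law of $\gmu$, and that the Witt condition $\fr(\mathbf{a})=[\mu^{p-1}]\mathbf{a}$ is precisely the set of relations among the $a_i$ making this correction exact.

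Next, the homomorphism property $E_p(\mathbf{a}+\mathbf{b},\mu;T)=E_p(\mathbf{a},\mu;T)\,E_p(\mathbf{b},\mu;T)$, where $+$ is Witt addition, is the fundamental additivity of the deformed Artin--Hasse exponential with respect to $W$-addition, again available from \cite{SS4}; this shows $\xi^0_{R/\lb R}$ is a morphism of groups. Injectivity I would get by a leading-term argument: the coefficient of $T^{p^k}$ in $E_p(\mathbf{a},\mu;T)$ is $a_k$ plus a polynomial in $a_0,\dots,a_{k-1}$ and $\mu$ (the factors with index $>k$ contribute nothing in this degree), so $E_p(\mathbf{a},\mu;T)=1$ forces $a_0=0$, then $a_1=0$, and inductively $\mathbf{a}=0$.

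The main obstacle is surjectivity. Given $F\in(R/\lb R[T])^{\ast}$ with $F(U)F(V)=F(U+V+\mu UV)$, I would proceed by successive approximation, peeling off one Witt component at a time: match the lowest nonconstant term of $F$, say in degree $p^k$, with an appropriate $E_p(a_k,\mu^{p^k};T^{p^k})$, and replace $F$ by $F/E_p((0,\dots,0,a_k,0,\dots),\mu;T)$, which is again a character agreeing with $1$ to strictly higher order. Since $F$ is a polynomial over the Artinian ring $R/\lb R$ of finite length $v(\lb)$, the process terminates and produces $\mathbf{a}\in\widehat{W}(R/\lb R)$ with $E_p(\mathbf{a},\mu;T)=F$. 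The delicate points, which form the technical heart of the argument, are to show that each extracted $a_k$ is a well-defined nilpotent element of $R/\lb R$, that the support stays finite, and above all that the resulting $\mathbf{a}$ automatically lies in $\ker(\fr-[\mu^{p-1}])$; this last fact has to be read off from the functional equation for $F$ and is where one genuinely uses the finite-length hypothesis on $R/\lb R$ together with the detailed structure of $E_p(U,\Lambda;T)$ from \cite{SS4}.
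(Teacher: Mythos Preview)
The paper does not prove this statement at all: its entire proof is the single sentence ``It is a particular case of \cite[2.19.1]{SS4}.'' So there is no argument in the paper to compare against; the result is imported wholesale from Sekiguchi--Suwa.

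Your proposal is a reasonable outline of how one might reconstruct the Sekiguchi--Suwa argument, and the overall architecture (well-definedness, additivity, injectivity by leading coefficients, surjectivity by successive approximation) is sound. But as written it is still a sketch that, at each substantive step, defers to \cite{SS4}: the fact that the Witt condition $\fr(\mathbf{a})=[\mu^{p-1}]\mathbf{a}$ is exactly what makes $E_p(\mathbf{a},\mu;T)$ a character of $\gmu$, and the additivity in $\mathbf{a}$, are both invoked from \cite{SS4} rather than proved. In that sense your proposal does not add independent content over the paper's one-line citation.

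If your aim is a genuinely self-contained proof, the place where real work is missing is surjectivity. Your peeling argument assumes that after dividing $F$ by $E_p([a_0],\mu;T)$ (matching the linear term), the quotient character has no nonconstant term below degree $p$; and inductively that after $k$ steps the residual character is $\equiv 1 \pmod{T^{p^k}}$. This is not automatic: it is exactly the statement that characters of $\gmu$ over an artinian base with trivial $T^{p^k}$-jet actually have trivial $T^{p^{k+1}-1}$-jet, and proving it requires either a direct cocycle analysis or the structure theory of $\Hom(\gmu,\widehat{\mathbb{G}}_m)$ developed in \cite{SS4}. Likewise, that the extracted $\mathbf{a}$ lands in $\ker(\fr-[\mu^{p-1}])$ is the converse of the well-definedness claim and needs its own argument. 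You flag these as ``delicate points,'' which is accurate; but until they are filled in, the proposal is an outline rather than a proof, and the honest thing to do---which is what the paper does---is to cite \cite[2.19.1]{SS4} directly.
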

\begin{proof}
It is a particular case of \cite[2.19.1]{SS4}.
\end{proof}

Moreover \ref{teo:ss ext1} and \ref{teo:ss hom} give the
following:
\begin{cor} For any $\mu,\lb \in \pi R\setminus\{0\}$ the map\begin{align*}
\alpha\circ \xi_{R/\lb R}^0 :\widehat{W}(R/\lb R)^{\fr-[\mu^{p-1}]}/\left<1+\mu T\right>&\too \ext^1_S(\gmu,{\glb})\\
\textbf{a}&\longmapsto [\clE^{(\mu,\lb;\widetilde{F})}],
\end{align*}
where $\widetilde{F}(T)\in R[T]$ is a lifting of
$E_{p}(\bbf{a},\mu; T)$, is an isomorphism.
\end{cor}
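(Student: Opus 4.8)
The plan is to deduce the statement formally from the two theorems just recalled, \ref{teo:ss ext1} and \ref{teo:ss hom}, with the only genuine computation being the identification, in terms of Witt vectors, of the kernel of the composite. By \ref{teo:ss hom} the map $\xi^0_{R/\lb R}$ is a bijective homomorphism, hence an isomorphism of abelian groups from $\widehat{W}(R/\lb R)^{\fr-[\mu^{p-1}]}$ (a group under addition of Witt vectors, since both $\fr$ and multiplication by $[\mu^{p-1}]$ are additive) onto $\Hom_{S_\lb-gr}(\gmu,\gmSl)$ (a group under multiplication of power series, the passage from sum to product being exactly the defining property of $E_p(\bbf{a},\mu;T)$). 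By \ref{teo:ss ext1} the map $\alpha$ is a surjective homomorphism with $\ker\alpha=\langle 1+\mu T\rangle$.

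First I would observe that $\alpha\circ\xi^0_{R/\lb R}$ is then a surjective homomorphism of abelian groups whose kernel is the preimage $(\xi^0_{R/\lb R})^{-1}(\langle 1+\mu T\rangle)$. Since $\xi^0_{R/\lb R}$ is an isomorphism, it carries this preimage isomorphically onto $\langle 1+\mu T\rangle$ and induces an isomorphism from the quotient by it onto $\ext^1_S(\gmu,\glb)$. Thus the entire content reduces to computing the preimage of $1+\mu T$ under $\xi^0_{R/\lb R}$.

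The heart of the argument, and its only non-formal step, is to check that this preimage is the Teichm\"uller representative $[\mu]=(\mu,0,0,\dots)$, equivalently that $E_p([\mu],\mu;T)=1+\mu T$. Using $E_p([\mu],\mu;T)=\prod_{k\ge 0}E_p(a_k,\mu^{p^k};T^{p^k})$ with $a_0=\mu$ and $a_k=0$ for $k\ge 1$, I would evaluate the defining product for $E_p(U,\Lambda;T)$ at $U=\Lambda=\mu$: the factor $(1+\Lambda T)^{U/\Lambda}$ becomes $1+\mu T$, while each exponent $(U/\Lambda)^{p^r}-(U/\Lambda)^{p^{r-1}}$ vanishes because $U/\Lambda=1$, so all higher factors are trivial; similarly $E_p(0,\Lambda;T)=1$ kills the terms with $k\ge 1$. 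Hence $E_p([\mu],\mu;T)=1+\mu T$. Note that $[\mu]$ indeed lies in $\widehat{W}(R/\lb R)^{\fr-[\mu^{p-1}]}$: since $\lb\in\piR$ the ring $R/\lb R$ is Artinian with nilpotent maximal ideal and $\mu\in\pi R$, so $[\mu]\in\widehat{W}(R/\lb R)$; and $1+\mu T$ genuinely lies in the target, as it satisfies the cocycle identity $F(U)F(V)=F(U+V+\mu UV)$. This also justifies the abuse of notation by which $\langle 1+\mu T\rangle$ in the source stands for the cyclic subgroup $\langle[\mu]\rangle$, and the claimed isomorphism follows.
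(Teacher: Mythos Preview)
Your proof is correct and follows exactly the approach the paper intends: the corollary is stated in the paper as an immediate consequence of \ref{teo:ss ext1} and \ref{teo:ss hom}, with no further argument given, and you have correctly supplied the routine details. The one small point where your argument is slightly indirect is the verification that $[\mu]$ lies in the kernel of $\fr-[\mu^{p-1}]$: rather than inferring this from the bijectivity of $\xi^0_{R/\lb R}$ and the fact that $1+\mu T$ lies in the target, it is cleaner to observe directly that $\fr([\mu])=[\mu^p]=[\mu^{p-1}]\cdot[\mu]$ by the multiplicativity of the Teichm\"uller map and the standard identity $\fr([a])=[a^p]$.
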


We now describe some natural maps through these identifications.
Let $\mu,\lb \in R\setminus\{0\}$ and, if \mbox{$char(R)=0$}, let
us assume that $(p-1)\mu\le v(p)$. Consider the isogeny
$$
\psi_{\mu,1}:\gmu\too \gmup.
$$
Let us first suppose that $p>2$. Then we have, by \cite[1.4.1 and
3.8]{SS4}), that,  if $p^2\equiv 0 \mod \lb$,
$$
\psi_{\mu,1}^*:\Hom_{{S_{\lb}}-gr}(\gmup,{\gmSl})\too
\Hom_{S_\lb-gr}(\gmu,{\gmSl})
$$
is given by
\begin{equation}\label{eq:phi*:W(R)too W(R)}
\begin{aligned}
\textit{\textbf{a}}&\longmapsto[\frac{p}{\mu^{p-1}}]\textit{\textbf{a}}+V(\textit{\textbf{a}}),
&\text{ if $char(R)=0$ and} \ \\
%
\textit{\textbf{a}}&\longmapsto V(\textit{\textbf{a}}), &\text{ if
$char(R)=p$.}\qquad
\end{aligned}
\end{equation}

For $p=2$ the situation is slightly different. Let us define
 a variant of the Verschiebung as follows. Define polynomials
 $$
\widetilde{V}_r(\mathbf{T})=\widetilde{V}_r(T_0,\dots,T_r)\in
\Q[T_0,\dots,T_{r}]
$$
 inductively  by $\widetilde{V}_0=0$ and
$$
\Phi_r(\widetilde{V}_0(\mathbf{T}),\dots,\widetilde{V}_r(\mathbf{T}
))=2^{2^r}\Phi_{r-1}(T_0,\dots,T_{r-1})
$$
for $r\ge 1$. It has been proved in \cite[1.4.1]{SS4} that, if
$\bbf{a}=(a_0,a_1,\dots)$, then
\begin{equation}\label{rem: tilde V per p=2}\widetilde{V}(\textit{\textbf{a}})\equiv
(0,2a_0,2a_0^2,\dots,2a_0^{{2^{i-1}}},\dots)\mod 2^2
\end{equation}
In particular $\widetilde{V}(\textit{\textbf{a}})\equiv 0\mod 2$.
Then we have, by  \cite[3.8]{SS4}, that (with possibly
$2^2\not\equiv 0\mod \lb$)
$$
\psi_{\mu,1}^*:\Hom_{S_{\lb}-gr}(\clG^{(\mu^2)},{\gmSl})\too
\Hom_{S_{\lb}-gr}(\gmu,{\gmSl})
$$
is given by
\begin{equation}\label{eq:psi per p=2}
\begin{aligned}
\textit{\textbf{a}}&\longmapsto[\frac{2}{\mu}]\textit{\textbf{a}}+V(\textit{\textbf{a}})+\widetilde{V}(\textit{\textbf{a}}),
&\text{ if $char(R)=0$ and}   \   \\
\textit{\textbf{a}}&\longmapsto V(\textit{\textbf{a}}), &\text{if
$char(R)=p$.}\qquad
\end{aligned}
\end{equation}


We now come back to the general case $p\ge 2$. 
Consider the morphism
\begin{equation*}\label{eq:def p}
\begin{aligned}
 p:\Hom_{S_\lb-gr}(\gmu,{\gmSl})&\too\Hom_{S_{\lb^p}-gr}(\gmu,{\mathbb{G}_{m,S_{\lb^p}}})\\
     F(T)&\longmapsto F(T)^p.
     \end{aligned}
\end{equation*}
We observe that we have
$$
{\psi_{\lb,1}}_*\circ \alpha=\alpha\circ p.
$$
Let \textbf{\textit{a}}$\in (\widehat{W}(R/\lb
R))^{\fr-[\mu^{p-1}]}$ and take any lifting
$\widetilde{\textbf{\textit{a}}}\in W(R)$. Using the
identifications of \ref{teo:ss hom} the morphism $p$ above
is given by
\begin{equation}\label{eq:p a}
\textbf{\textit{a}}\longmapsto p\widetilde{\textbf{\textit{a}}}
\end{equation}
(see \cite[4.6]{SS4}).  We will sometimes simply write $p\bbf{a}$.
\subsection{Interpretation of
$\Ext^1_S(G_{\mu,1},\gmS)$}

First of all, we   briefly recall a useful spectral sequence. Let
$G,H$ be flat and commutative $S$-group schemes and let
$\mathcal{E}xt_S^i(G,H)$ denote the fppf-sheaf on $Sch_{/S}$,
associated to the presheaf $X\longmapsto
\Ext^i_S{(G\times_{S}X,H{\times_S}X)}$. Then we have a spectral
sequence
$$
E_{2}^{ij}=H^i(S,\mathcal{E}xt_S^j(G,H))\Rightarrow
\Ext^{i+j}_S(G,H),
$$
which  in low degrees gives
\begin{equation}\label{eq:spectral sequence per ext}
\begin{aligned}
0\too H^1(S,\mathcal{H}om_{S-gr}(G,H))&\too \Ext^1_S(G,H)\too
H^0(S,\mathcal{E}xt^1_S(G,H))\too\\ \too &
H^2(S,\mathcal{H}om_{S-gr}(G,H))\too \Ext^{2}_S(G,H).
\end{aligned}
\end{equation}
Moreover $H^1(S,\mathcal{H}om_{S-gr}(G,H))$ is isomorphic to the
subgroup of $\Ext^1_S(G,H)$ formed by the extensions $E$ which
split over some faithfully flat affine $S$-scheme of finite type
(cf. \cite[III \S6, 3.6]{DG}). 
We will consider the case $H=\gmS$ and $G$ a finite flat group
scheme. In this case, $\mathcal{H}om_{S-gr}(G,\gmS)$ is   the
Cartier
dual of $G$, denoted by $G^\vee$. 
Since, by \cite[6.2.2]{Ra2}, $\mathcal{E}xt^1_{S}(G,\gmS)=0$ then
we obtain that the natural morphism
\begin{equation}\label{eq:ext1=H^1 per glbn}
H^{1}(S,G^\vee){\too} \Ext^1_S(G,{\gmS})
\end{equation}
of \eqref{eq:spectral sequence per ext} is in fact  an
isomorphism. 

\begin{prop}\label{prop:torsori schemi normali}
Let $X$ be a normal integral scheme. 
For any finite and flat commutative group scheme $G$ over $X$,
$$
i^*:\h{1}{X}{G}\too \h{1}{\Sp(K(X))}{G_{K(X)}}
$$
is injective, where $i:\Sp(K(X))\too X$ is the generic point.
\end{prop}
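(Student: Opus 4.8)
The plan is to read the statement as a triviality criterion for torsors and then reduce it to the standard fact that a finite birational morphism onto a normal scheme is an isomorphism. Since $G$ is finite and flat over $X$ it is in particular affine over $X$, so every class in $\h{1}{X}{G}$ is represented by an fppf $G$-torsor $P\too X$ that is itself representable, finite and flat over $X$ (an fppf torsor under an affine group scheme is affine by descent, and here finite flat because it is fppf-locally isomorphic to $G$). The map $i^*$ sends $[P]$ to the class of the generic fibre $P_{K(X)}=P\times_X\Sp(K(X))$, so it suffices to prove: if $P_{K(X)}$ is a trivial $G_{K(X)}$-torsor, then $P\too X$ admits a global section, whence $[P]=0$.

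So first I would take a class in $\ker(i^*)$. Triviality of $P_{K(X)}$ produces a $K(X)$-rational point of the finite $K(X)$-scheme $P_{K(X)}$, equivalently a rational section $\sigma:\Sp(K(X))\too P$ of $P\too X$ over the generic point $i$. I would then form the scheme-theoretic image $Z$ of $\sigma$ in $P$. Because $\sigma$ is a section over $\eta_X$, on residue fields one gets maps $K(X)\ha \kappa(\eta_P)\too K(X)$ whose composite is the identity; since both are field homomorphisms this forces $\kappa(\eta_P)\simeq K(X)$. Hence $Z$ is integral with function field $K(X)$, and as a closed subscheme of the finite $X$-scheme $P$ the induced map $f:Z\too X$ is finite and birational.

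The heart of the argument, and the step I expect to be the only nonformal one, is to deduce from normality that $f$ is an isomorphism. Here $f_*\oo{Z}$ is a finite, hence integral, $\ox$-subalgebra of the constant sheaf $K(X)$, so locally it lands in the integral closure of $\ox$ inside $K(X)$; normality of $X$ identifies this closure with $\ox$, giving $f_*\oo{Z}=\ox$ and therefore $Z\iso X$. Composing the inverse with the closed immersion $Z\ha P$ yields a genuine section $X\too P$, and a $G$-torsor with a section is trivial; thus $[P]=0$ and $i^*$ is injective. The only points requiring care are the representability of $P$ as a finite flat $X$-scheme and the identification $\kappa(\eta_P)=K(X)$, both of which are routine; the normality input is exactly what makes the rational section extend across all of $X$.
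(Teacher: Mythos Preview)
Your argument is correct and follows essentially the same route as the paper: take the schematic closure of the rational section inside the torsor, observe that it maps finitely and birationally to $X$, and conclude it is an isomorphism by normality. The paper invokes Zariski's Main Theorem for the last step while you argue directly via integral closure of $\ox$ in $K(X)$, which amounts to the same thing; one cosmetic point is that your notation $\eta_P$ is slightly misleading since $P$ need not be irreducible, but the argument goes through verbatim with $\eta_P$ replaced by the image point $\sigma(\eta_X)$.
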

\begin{proof} A sketch of the proof has been suggested to us by F.
Andreatta. We recall that for any commutative group scheme $G'$
over a scheme $X'$ we have that $\h{1}{X'}{G'}$ is a group and it
classifies the $G'$-torsors over $X'$. Suppose there exists a
$G$-torsor $f:Y\too X$ such that $i^*f:i^*Y\to \Sp(K(X))$ is
trivial. This means there exists a section $s$ of $i^{*}f$. We
consider the scheme $Y_0$ which is the schematic closure of
$s(\Sp(K(X)))$ in $Y$. Then $f_{|Y_0}:Y_0\too X$ is a finite
birational morphism with $X$ a normal integral scheme. So, by
Zariski's Main Theorem (\cite[4.4.6]{liu}), we have that
$f_{|Y_0}$ is an open immersion, and so it is an isomorphism. So
we have a section of $f$ and $Y$ is a trivial $G$-torsor.
\end{proof}
\begin{rem}The hypothesis $G$ finite and $X$ normal   are necessary.
For the first it is sufficient to observe that  any
$\mathbb{G}_{m,X}$-torsor is trivial on
$\Sp(K(X))$. 
For the second one, consider $X=\Sp(k[x,y]/(x^p-y^{p+1}))=\Sp(A)$,
with $k$ any field of characteristic $p>0$, and $Y$ the
$\alpha_p$-torsor $\Sp(A[T]/(T^{p}-y))$. Generically this torsor
is trivial since we have $y=(\frac{x}{y})^p$. But $Y$ is not
trivial since $y$ is not a $p$-power in $A$.
\end{rem}
We deduce the following result which however it will not be  used
in the sequel.
\begin{cor}Let $X$ be a normal integral scheme.  Let $f:Y\too X$ be a
morphism with a rational section and let $g:G\too G'$ be a map of
finite and flat commutative group schemes over $X$, which is an
isomorphism over $\Sp(K(X))$. Then
$$
f^*g_*:\hxg{G}\too H^1(Y,G'_{Y})
$$
is injective.
\end{cor}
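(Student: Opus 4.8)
The plan is to reduce the assertion to Proposition \ref{prop:torsori schemi normali} by using the rational section to build a retraction at the generic point. Write $\eta=\Sp(K(X))$ and let $i:\eta\too X$ be the inclusion of the generic point. Since $X$ is integral, the rational section of $f$ is defined on some dense open of $X$, which necessarily contains $\eta$; restricting it produces a morphism $\sigma:\eta\too Y$ with $f\circ\sigma=i$. This $\sigma$ is the one geometric input I need.

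Next I would record the relevant functoriality of pullbacks. Let $g_K:=g_{K(X)}:G_{K(X)}\too G'_{K(X)}$ be the restriction of $g$ to the generic fibre; by hypothesis it is an isomorphism, so the induced map $g_{K,*}$ on $H^1(\eta,-)$ is an isomorphism as well. Two compatibilities are then used. First, naturality of $g$ under restriction to $\eta$ gives $i^*\circ g_*=g_{K,*}\circ i^*$ as maps $\hxg{G}\too H^1(\eta,G'_{K(X)})$. Second, since $f\circ\sigma=i$ and $\sigma^*f^*G'=(f\sigma)^*G'=i^*G'=G'_{K(X)}$, functoriality of pullback on fppf cohomology yields $\sigma^*\circ f^*=i^*$ as maps $\hxg{G'}\too H^1(\eta,G'_{K(X)})$.

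With these in place the argument is immediate, and I would run it as follows. Suppose $\alpha\in\hxg{G}$ satisfies $f^*g_*\alpha=0$. Applying $\sigma^*$ and invoking $\sigma^*f^*=i^*$ gives $i^*g_*\alpha=0$, and the first compatibility rewrites this as $g_{K,*}(i^*\alpha)=0$. Since $g_{K,*}$ is an isomorphism, $i^*\alpha=0$. But $i^*:\hxg{G}\too H^1(\eta,G_{K(X)})$ is injective by Proposition \ref{prop:torsori schemi normali}, whose hypotheses ($X$ normal and integral, $G$ finite and flat) are exactly those assumed here. Hence $\alpha=0$, which is the desired injectivity of $f^*g_*$.

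The only delicate point — and the mild obstacle — is the bookkeeping of base changes: one must verify that $\sigma^*$ actually lands in the group $H^1(\eta,G'_{K(X)})$, i.e. that the pullback of $f^*G'$ along $\sigma$ is canonically identified with $G'_{K(X)}$, and that all the squares of pullback maps commute. These are standard facts about fppf cohomology, but I would state them cleanly so that the composite $\sigma^*f^*$ is genuinely identified with $i^*$; once that identification is made, no computation is required and the injectivity follows formally.
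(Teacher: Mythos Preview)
Your proof is correct and follows essentially the same approach as the paper: factor the generic point $i:\eta\to X$ through $Y$ via the rational section, compose $f^*g_*$ with the resulting pullback $\sigma^*$ to recover (up to the isomorphism $g_{K,*}$) the restriction map $i^*$, and conclude by the injectivity of $i^*$ from Proposition~\ref{prop:torsori schemi normali}. Your write-up is somewhat more explicit about the functoriality and the identification $\sigma^*f^*=i^*$, but the underlying argument is the same.
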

\begin{proof}
By hypothesis $\Sp(K(X))\too X$ factors through $f:Y\too X$. If
$i:\Sp(K(X))\too X$,  we have
$$
i_*:\hxg{G}\on{g_*}{\too} \hxg{G'}\on{f^*}{\too} H^1(Y,G'_{Y})\too
H^1(\Sp(K(X)),G_{K(X)}).
$$
Therefore, by the previous proposition, it follows that
$$
\hxg{G}\too H^1(Y,G'_{Y})
$$
is injective.
\end{proof}
\begin{rem}The previous corollary can be applied, for instance, to the case
$f=\id_X$ or to the case $f:U\too X$ an open immersion and
$g=\id_G$. Roberts (\cite[p. 692]{Rob}) has proved the corollary
in the case $f=\id_X$, with $X=\Sp(A)$ and $A$ the  ring of
integers in a local number field.
\end{rem}

\section{Models of $\mu_{p^2,K}$}\label{sec:modelli di Z/p^2Z}

First of all we prove that any  model of $\mu_{p^2,K}$ is an
abelian extension of $G_{\mu,1}$ by $G_{\lb,1}$ for some
$\mu,\lb\in R\setminus\{0\}$.
\begin{lem}\label{lem:modelli di Z/p^2 Z sono estensioni}
Let $G$ be a finite and flat $R$-group scheme of order $p^2$ such
that $G_K$ is diagonalizable. Then $G$ is an extension of
$G_{\mu,1}$ by $G_{\lb,1}$ for some $\mu,\lb\in R\setminus\{0\}$.
Moreover the generic fiber of any extension $G$ of $G_{\mu,1}$ by
$G_{\lb,1}$ is  multiplicative. In particular $G$ is commutative.
\end{lem}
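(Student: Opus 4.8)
The plan is to treat the three assertions in turn: the first by a schematic-closure argument over $R$, and the other two by Cartier duality combined with the connected-�tale sequence over the field $K$.

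First I would produce the extension. Since $G_K$ is diagonalizable of order $p^2$, it is $D(M)$ for an abelian group $M$ of order $p^2$, so $M\simeq \Z/p^2\Z$ or $M\simeq \Z/p\Z\times\Z/p\Z$; in either case $M$ sits in a short exact sequence $0\to \Z/p\Z\to M\to \Z/p\Z\to 0$, and applying the contravariant exact diagonalizable functor $D$ yields $0\to \mu_{p,K}\to G_K\to \mu_{p,K}\to 0$. Let $H_K\simeq\mu_{p,K}$ be the resulting subgroup and let $H$ be its schematic closure in $G$. Over the d.v.r.\ $R$ this closure is $R$-flat, hence finite and flat over $R$, and it is a subgroup scheme: restricting the multiplication to the $R$-flat (thus $R$-torsion-free) scheme $H\times_S H$, the image of the schematically dense generic fibre lies in $H$, so all of $H\times_S H$ maps into $H$ (and similarly for the inverse). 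Since $G_K$ is commutative, $H_K$ is normal in $G_K$, and the same density argument applied to the conjugation morphism $G\times_S H\to G$ shows $H$ is normal in $G$. Thus the fppf quotient $G/H$ exists as a finite flat $R$-group scheme, its formation commutes with the flat base change $R\to K$, so $(G/H)_K\simeq\mu_{p,K}$. By \ref{teo:modelli di Z/pZ} we obtain $H\simeq G_{\lb,1}$ and $G/H\simeq G_{\mu,1}$ for suitable $\lb,\mu\in R\setminus\{0\}$, exhibiting $G$ as an extension of $G_{\mu,1}$ by $G_{\lb,1}$.

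For the second assertion, let $0\to G_{\lb,1}\to G\to G_{\mu,1}\to 0$ be any extension and pass to the generic fibre: since $(G_{\lb,1})_K\simeq\mu_{p,K}\simeq(G_{\mu,1})_K$, this gives $0\to \mu_{p,K}\to G_K\to\mu_{p,K}\to 0$. I would then apply Cartier duality, which is exact, contravariant, and sends $\mu_{p,K}$ to the �tale group $(\Z/p\Z)_K$, to get $0\to (\Z/p\Z)_K\to G_K^\vee\to(\Z/p\Z)_K\to 0$. An extension of �tale by �tale group schemes over a field is �tale: the connected component $(G_K^\vee)^0$ maps trivially to the �tale quotient $(\Z/p\Z)_K$, hence lands in the �tale sub $(\Z/p\Z)_K$, and being connected it must be trivial. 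Therefore $G_K^\vee$ is �tale, so $G_K=(G_K^\vee)^\vee$ is of multiplicative type.

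Finally, group schemes of multiplicative type are commutative, so $G_K$ is commutative; since $G$ is $R$-flat, hence $R$-torsion-free, the two morphisms $m,\,m^{\mathrm{op}}:G\times_S G\to G$ agree on the schematically dense generic fibre and therefore coincide, which gives commutativity of $G$. The main obstacle I anticipate is the bookkeeping of the first paragraph: turning the schematic closure into a \emph{normal} finite flat subgroup scheme and invoking existence of the quotient \emph{before} commutativity of $G$ is available. The device that makes all of this work uniformly (in both equal and unequal characteristic) is the schematic density of the generic fibre in an $R$-flat scheme, which lets one transport the subgroup property, normality, and the exactness relations from $G_K$ to $G$.
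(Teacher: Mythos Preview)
Your first paragraph is essentially the paper's argument, with extra care about normality and existence of the quotient before commutativity is known; this is fine and arguably more complete than the paper, which simply takes the schematic closure and quotient without comment.

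The second and third paragraphs contain a circularity. You apply Cartier duality to $G_K$ in order to conclude that $G_K$ is of multiplicative type, and then deduce commutativity of $G_K$ (and of $G$) from that. But Cartier duality is only defined for \emph{commutative} finite group schemes, and at this point in the argument $G$ is an arbitrary extension of $G_{\mu,1}$ by $G_{\lambda,1}$ in the category of group schemes, so $G_K$ is not yet known to be commutative. Indeed, the whole point of the ``In particular $G$ is commutative'' clause is to justify restricting to abelian extensions in the rest of the paper.

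The paper sidesteps this by arguing directly over $\overline K$: in characteristic~$0$, $G_{\overline K}$ is \'etale hence constant of order $p^2$, and any group of order $p^2$ is abelian (and then diagonalizable over an algebraically closed field); in characteristic~$p$ it cites \cite[III \S 6, 8.7]{DG}. Your approach can be repaired by inserting a preliminary step: the subgroup $\mu_{p,K}$ is automatically central in $G_K$ (conjugation gives a map $\mu_{p,K}\to\mathrm{Aut}(\mu_{p,K})\simeq(\Z/p\Z)^\times$, of order prime to $p$), so the commutator descends to a bilinear alternating pairing $\mu_{p,K}\times\mu_{p,K}\to\mu_{p,K}$, i.e.\ a homomorphism $\mu_{p,K}\to(\Z/p\Z)_K$; in characteristic $p$ this vanishes (connected to \'etale), and in characteristic $0$ one passes to $\overline K$ and uses that groups of order $p^2$ are abelian. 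Once $G_K$ is known to be commutative, your Cartier-duality argument goes through and is a pleasant alternative to the paper's citation.
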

\begin{proof}We have that $G_K$ is isomorphic to $\mu_{p^2,K}$ or to $\mu_{p,K}\times
\mu_{p,K}$. We consider the factorization
$$
0\too \mu_{p,K}\too G_K\too \mu_{p,K}\too 0.
$$
We take the schematic closure  $G_1$ of $\mu_{p,K}$ in $G$. Then
$G_1$ is a model of $\mu_{p,K}$. So by \ref{teo:modelli di Z/pZ}
it follows that $G_1\simeq G_{\lb,1}$ for some $\lb\in
R\setminus\{0\}$. Moreover $G/G_{\lb,1}$ is a model of
$\mu_{p,K}$, too. So, again by \ref{teo:modelli di Z/pZ}, we have
$G/G_{\lb,1}\simeq G_{\mu,1}$ for some $\mu\in R\setminus\{0\}$.
The first assertion is proved.

Now let $G$ be an extension of $G_{\mu,1}$ by $G_{\lb,1}$. We
recall that $G_K$ is of multiplicative type if and only if
$G_{\overline{K}}$ is diagonalizable. If $char(R)=0$ then
$G_{\overline{K}}$ is a constant group of order $p^2$. Since
$\overline{K}$ is algebraically closed,  it is diagonalizable. If
$char(R)=p$ then $G_{\overline{K}}$  is isomorphic to
$\mu_{p,\overline{K}}\times \mu_{p,\overline{K}}$ or
$\mu_{p^2,\overline{K}}$ by \cite[III \S 6, 8.7]{DG}.
\end{proof}

So it seems natural to study  the group
$\Ext^1_S(G_{\mu,1},G_{\lb,1})$. We observe that by the previous
lemma it follows that it is not restrictive to consider only
abelian extensions, as supposed at the beginning of
\S\ref{par:review of results}. Before beginning the study of
$\Ext^1_S(G_{\mu,1},G_{\lb,1})$ we prove the following lemma. 

\begin{lem}\label{lem:v(mu)>= v(lb)} Let $\clE$ be an extension of $G_{\mu,1}$ by $G_{\lb,1}$ with $\mu,\lb \in R\setminus\{0\}$. If $\clE$ is a
model of $\mu_{p^2,K}$ then necessarily $v(\mu)\ge v(\lb)$.
\end{lem}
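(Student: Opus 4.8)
The plan is to extract from the hypothesis a nonzero homomorphism $G_{\mu,1}\too G_{\lb,1}$ of $R$-group schemes and then read off the inequality from the computation \eqref{eq:Hom(glbn,glmn)}, which gives $\Hom_{S-gr}(G_{\mu,1},G_{\lb,1})\simeq\Z/p\Z$ when $v(\mu)\ge v(\lb)$ and $0$ otherwise. By \ref{lem:modelli di Z/p^2 Z sono estensioni} the group scheme $\clE$ is commutative, so multiplication by $p$ is an endomorphism $[p]\colon\clE\too\clE$ of $R$-group schemes. Here $\clE$ sits in $0\too G_{\lb,1}\too\clE\too G_{\mu,1}\too 0$ with $G_{\lb,1}$ closed in $\clE$ and equal to the schematic closure of its generic fibre $\mu_{p,K}\subset\mu_{p^2,K}=\clE_K$.

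The key point is to manufacture the homomorphism by factoring $[p]$. First I would note that $[p]$ annihilates $G_{\lb,1}$: on the generic fibre $[p]$ kills $\mu_{p,K}$, since the latter has exponent $p$, and as $G_{\lb,1}$ is $R$-flat its generic fibre is schematically dense, so the morphism $[p]|_{G_{\lb,1}}$, which agrees with the trivial homomorphism on a schematically dense subscheme and takes values in the separated $R$-scheme $\clE$, must itself be trivial. Hence $[p]$ factors through the quotient, yielding $\overline{[p]}\colon G_{\mu,1}\too\clE$. Next I would show $\overline{[p]}$ factors through the closed subgroup $G_{\lb,1}$: the generic fibre of $\overline{[p]}$ is the map $\mu_{p^2,K}/\mu_{p,K}\too\mu_{p^2,K}$ induced by $[p]=(\,\cdot\,)^p$, whose image is exactly the subgroup $\mu_{p,K}=(G_{\lb,1})_K$; since $G_{\mu,1}$ is $R$-flat, the preimage $\overline{[p]}{}^{-1}(G_{\lb,1})$ is a closed subscheme of $G_{\mu,1}$ whose generic fibre is everything, and by schematic density of the generic fibre it must be all of $G_{\mu,1}$. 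This produces $\phi\colon G_{\mu,1}\too G_{\lb,1}$ lying in $\Hom_{S-gr}(G_{\mu,1},G_{\lb,1})$.

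Finally, $\phi$ is nonzero: on the generic fibre it is the isomorphism $\mu_{p^2,K}/\mu_{p,K}\iso\mu_{p,K}$ induced by the $p$-th power map on $\mu_{p^2,K}$ (an isomorphism because both sides have order $p$ and the induced map is surjective onto the image $\mu_{p,K}$), which is certainly nonzero; being nonzero generically, $\phi$ is nonzero as a map of $R$-flat group schemes. Therefore $\Hom_{S-gr}(G_{\mu,1},G_{\lb,1})\neq 0$, and \eqref{eq:Hom(glbn,glmn)} forces $v(\mu)\ge v(\lb)$.

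I expect the only delicate steps to be the two descent arguments — that $[p]$ kills $G_{\lb,1}$ and that $\overline{[p]}$ lands in $G_{\lb,1}$ — both of which rest on the same principle that for an $R$-flat scheme the generic fibre is schematically dense, so equalities of morphisms and factorizations through closed subschemes may be checked on the generic fibre. Once these are in place the conclusion is immediate from the $\Hom$ table, the boundary case $v(\lb)=0$ being automatic since then $v(\mu)\ge 0=v(\lb)$ trivially.
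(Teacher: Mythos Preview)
Your argument is correct and follows essentially the same route as the paper: factor multiplication by $p$ on $\clE$ through the quotient $G_{\mu,1}$ and then through the subgroup $G_{\lb,1}$ to obtain a nonzero element of $\Hom_{S-gr}(G_{\mu,1},G_{\lb,1})$, and conclude via \eqref{eq:Hom(glbn,glmn)}. The only minor difference is that you justify ``$[p]$ kills $G_{\lb,1}$'' by schematic density of the generic fibre, whereas the paper simply observes it (it is immediate since $G_{\lb,1}$ has order $p$); your careful explanation of why the induced map $G_{\mu,1}\to\clE$ factors through the closed subgroup $G_{\lb,1}$ via schematic closure is a welcome unpacking of what the paper states in one line.
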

\begin{proof}
Let us consider the map $p:\clE\too \clE$.  We observe that $p$ is
trivial on $G_{\lb,1}$, therefore we have an induced map
$p:G_{\mu,1}\too \clE$. Since $\clE_K\simeq \mu_{p^2,K}$ we have,
on the generic fiber, the factorization
$$
p:(G_{\mu,1})_K\too (G_{\lb,1})_K\hookrightarrow \clE_K,
$$
where the first map is an isomorphism. So $p$ induces a model map
$G_{\mu,1}\too G_{\lb,1}$, hence from \eqref{eq:Hom(glbn,glmn)} it
follows $v(\mu)\ge v(\lb)$.
\end{proof}
\subsection{Two exact sequences}\label{sec:two exact sequences}
The main tools which we will use to 
calculate the extensions of $\gmx{1}$ by $G_{\lb,1}$ are two exact
sequences. We recall them in this subsection. See \eqref{eq:ker
phi} and \eqref{eq:ker alpha} below.



Applying  the functor $\Ext$ to the following exact sequence of
group schemes
$$
(\Lambda): \qquad 0\too
\glx{1}\on{i}{\too}\glb\on{\psi_{\lb,1}}{\too} \g^{(\lb^{p})}\too
0,
$$
we obtain
\begin{equation}
\begin{aligned}\label{eq:ker phi}
0\too\Hom_{S-gr}(G_{\mu,1},\glbp)\on{\delta'}{\too}
\Ext^1_S(G_{\mu,1},\glx{1})\on{i_*}\too& \\ \too
\Ext^1_S(G_{\mu,1},\glb)
&\on{\psi_{{\lb,1}_*}}{\too}\Ext^1_S(\gmx{1},\glbp).
\end{aligned}
\end{equation}
We remark that $\delta'$ is injective  by
\eqref{eq:Hom(glbn,glmn)}.
The map
\begin{equation}\label{eq:def delta'}
\delta':\Hom_{S-gr}(G_{\mu,1},\glbp)\too
\Ext^1_S(G_{\mu,1},\glx{1})
\end{equation}
is defined by
$$
\sigma_j\longmapsto (\sigma_j)^*(\Lambda),
$$
where $(\sigma_j)^*(\Lambda)$ is the extension class of the group
scheme
\begin{equation}\label{eq:E(mu,lb,1,j)}
\clE^{(\mu,\lb;1,j)}:=\Sp(R[T_1,T_2]/(\frac{(1+\mu
T_1)^p-1}{\mu^p},\frac{(1+\lb T_2)^p-(1+\mu T_1)^j}{\lb^p}),
\end{equation}
with the maps
\begin{align*}
\glx{1}&\too \clE^{(\mu,\lb;1,j)}\\
   T_1&\longmapsto 0\\
   T_2&\longmapsto T
\end{align*}
and
\begin{align*}
\clE^{(\mu,\lb;1,j)}&\too\gmx{1}\\
   T&\longmapsto T_1
\end{align*}
The structure of group scheme on $ \clE^{(\mu,\lb;1,j)}$ is
inherited by the obvious closed immersion in the two-dimensional
group
scheme $ \clE^{(\mu,\lb;1)}\simeq \gmu\times \glb$. We observe that, by \eqref{eq:Hom(glbn,glmn)}, $\delta'$ is nontrivial if and only if $v(\mu)\ge pv(\lb)$. 
Moreover, if $v(\mu)=v(\lb)=0$, $\clE^{(\mu,\lb;1,j)}$ is
isomorphic to the group scheme
$$
\clE_{j,S}:=\Sp(R[T_1,T_2]/(T_1^p-1,\frac{T_2^p}{T_1^j}-1)),
$$
which is the kernel of the morphism $\psi^j:(\gmS)^2\too (\gmS)^2$
given by $(T_1,T_2)\too (T_1^p,T_1^{-j}T_2^p)$. If $j\neq 0,p$
then $\clE_{j,S}\simeq \mu_{p^2,S}$, otherwise it is isomorphic to
$\mu_{p,S}\times \mu_{p,S}$.

We now suppose $\lb \in \piR$. From the exact sequence
\eqref{eq:succ esatta glb in gm} 
we obtain  the following long exact sequence
\begin{equation}\label{eq:ker alpha}
\begin{array}{ll}
0\too &\Hom_{S-gr}(\gmx{1},\g^{(\lb)})\too
\Hom_{S-gr}(\gmx{1},\gmS)\on{r_{\lb}}{\too}
\Hom_{S_{\lb}-gr}(\gmx{1},\mathbb{G}_{m,S_{\lb}})\on{\delta}{\too}\\
&\too \Ext^1_S(\gmx{1},\g^{(\lb)})\on{\alpha^{(\lb)}_{*}}{\too}
\Ext^1_S(\gmx{1},\gm)\too
\Ext^1_{S_{\lb}}({\gmx{1}},{\mathbb{G}_{m,S_{\lb}}}).
\end{array}
\end{equation}

 So we   have, using \eqref{eq:Hom(glbn,glmn)},
\begin{equation}\label{eq:ker alpha con r lambda}
 \ker{\alpha^{(\lb)}_{*}}\simeq
\Hom_{S_{\lb}-gr}(\gmx{1},\mathbb{G}_{m,S_{\lb}})/\left<1+\mu
T\right>.
\end{equation}

 In the following we give a more explicit description of the main
ingredients of the exact sequences \eqref{eq:ker phi} and \eqref{eq:ker alpha}.

\subsection{Explicit description of
$\Hom_{S_{\lb}-gr}({\gmx{1}},{\gmSl}$)}\label{subsec:explicit
description...}

Let $\lb \in \pi R\setminus\{0\}$.  First we consider the simplest
case: by \cite[II \S 1, 2.11]{DG}
\begin{equation}\label{eq:Hom(mup,gm)su Slb}
\Hom_{S_{\lb}-gr}({\mu_{p,S_\lb}},{\gmSl})=\{T^i\in (R/\lb
R)[T,1/T];i\in \Z/p\Z\}.
\end{equation}

Now we study $\Hom_{S_{\lb}-gr}({\gmx{1}},{\gmSl})$ in general.
\begin{prop}\label{lem:suriettività mappa tra hom}
Let    $\mu,\lb \in \pi R\setminus\{0\}$. We  suppose
 $ (p-1)v(\mu)\le v(p)$  if $char(R)=0$.
\begin{itemize}
\item[(i)]
 The map
$$
i^*:\Hom_{S_\lb-gr}(\gmu,{\gmSl})\too
\Hom_{S_\lb-gr}({G_{\mu,1}},\gmSl),
$$
induced by
$$
i:G_{\mu,1}\hookrightarrow \gmu,
$$
is surjective.
\end{itemize}
Moreover, in the case $char(R)=0$, we also  assume $(p-1)v(\lb)\le
v(p)$. 
\begin{itemize}
\item[(ii)]
  We have the following isomorphism of groups
$$
(\xi^0_{R/\lb R})_p:(R/\lb R)^{{\fr}-\mu^{p-1}}
\too\Hom_{S_\lb-gr}({\gmx{1}},{\gmSl})
$$
given by
$$
a\longmapsto E_p(a,\mu; T).$$ Moreover $ E_p(a,\mu; T)=
1+\sum_{i=1}^{p-1}\frac{\prod_{k=0}^{i-1}(a-k\mu)}{i!}T^i$.

 \item[(iii)] The restriction map
$$
i^*:\Hom_{S_\lb-gr}({\gmu},{\gmSl})\simeq\widehat{W}(R/\lb
R)^{{\fr}-[\mu^{p-1}]}\too
\Hom_{S_\lb-gr}({\gmx{1}},{\gmSl})\simeq (R/\lb
R)^{{\fr}-\mu^{p-1}}
$$
is given, in terms of Witt vectors,  by
\begin{align*}
\bbf{a}=(a_0,a_1,\dots,0,0,0,\dots)&\longmapsto a_0+
\sum_{j=1}^{\infty}(-1)^j(\prod_{r=0}^{j-1}(\frac{p}{\mu^{p-1}})^{p^r})a_j
&
\text{ if } char(R)=0,\\
\bbf{a}=(a_0,a_1,\dots,0,0,0,\dots)&\longmapsto a_0 & \text{ if }
char(R)=p.
\end{align*}
\end{itemize}
\end{prop}
\begin{rem} If $char(R)=0$ and $v(\lb)\le v(p)-(p-1)v(\mu)$ then $i^*(\bbf{a})=a_0$. 
\end{rem}
\begin{proof}

(i) An element  of $\Hom_{S_\lb-gr}({\gmx{1}},{\gmSl})$  could be
represented by a polynomial
$$F(T)=\sum_{i=0}^{p-1}a_i T^i\in (R/\lb R)[T]$$ such that
\begin{itemize}
    \item[(a)] $F(U)F(V)\equiv F(U+V+\mu UV)\mod (\frac{(1+\mu U)^p-1}{\mu^p}, \frac{(1+\mu V)^p-1}{\mu^p})$
    \item[(b)] $F(T)$ is invertible in $(R/\lb R)[T]/(\frac{(1+\mu
    T)^p-1}{\mu^p})$.
\end{itemize}
The condition (a) implies that $F(U)F(V)=F(U+V+\mu UV)$, since all
terms of $F(U)F(V)$ and $F(U+V+\mu UV)$ have degree at most $p-1$
in $U$ and $V$. Moreover the finite group scheme $(G_{\mu,1})_k$
is a closed subgroup scheme of $\mathbb{G}_{a,k}$ and therefore
the Cartier dual $(G_{\mu,1}^{\vee})_k$ is infinitesimal. In
particular, we have
$\Hom_{k-gr}((G_{\mu,1})_k,\mathbb{G}_{m,k})=(G_{\mu,1}^{\vee})_k(k)=\{0\}$.
Then $F(T)\equiv 1 \mod \pi$. This implies that $F(T)$ is
invertible in $(R/\lb R)[T]$, since $\pi$ is nilpotent in $R/\lb
R$. Therefore we have that $F(T)$ represents an element of
$\Hom_{S_\lb-gr}({\gmu},{\gmSl})$ and the map $i^*$ is surjective.

(ii) By the exact sequence
$$
  \qquad 0\too G_{\mu,1}\on{i}{\too}
\gmu\on{\psi_{\mu,1}}{\too} \gmup\too 0
$$
over $S_\lb$, we have the long exact sequence of cohomology
\begin{equation}\label{eq:succ. esatta per hom  verso gm}
\begin{aligned}
0\too \Hom_{S_\lb-gr}(\gmup,{\gmSl})\on{\psi_{\mu,1}^*}{\too}
\Hom_{S_\lb-gr}(\gmu,{\gmSl})\on{i^*}{\too}\\
\too \Hom_{S_\lb-gr}({G_{\mu,1}},{\gmSl})\on{\delta''}{\too}&
\Ext^1_{S_\lb}(\gmup,{\gmSl})\too \dots
\end{aligned}
\end{equation}

We study separately three cases.

\begin{tabular}{|c|}
  \hline
$char(R)=0$ and     $v(\lb)\le (p-1)v(\mu)$. \\
 \hline
\end{tabular}
 From \ref{lem:somma termine per termine} we have that the
restriction of the Teichm\"{u}ller map
$$
T:(R/\lb R)^{\fr}\too \widehat{W}(R/\lb R)^{\fr},
$$
given by
$$
a\longmapsto [a],
$$
is a morphism of groups.
Moreover, if we consider the isomorphism
$$\xi^0_{R/\lb R}:\widehat{W}(R/\lb R)^{\fr}\too
\Hom_{S_\lb-gr}({\gmu},{\gmSl}),$$  
we have
$$
i^*\circ \xi^0_{R/\lb R}\circ T=(\xi^0_{R/\lb R})_p.
$$
So $(\xi^0_{R/\lb R})_p$ is a morphism of groups. We now prove
that it is surjective.

As remarked in (i)  any element of
$\Hom_{S_\lb-gr}({G_{\mu,1}},{\gmSl})$ can be represented by a
polynomial  $F(T)$ of degree at most $p-1$ with coefficients in
$R/\lb R$. By \cite[3.5, 3.7]{SS7} it satisfies $F(U)F(V)=
F(U+V+\mu UV)$ if and only if $F(T)=
1+\sum_{i=1}^{p-1}\frac{\prod_{k=0}^{i-1}(a-k\mu)}{i!}T^i$ for
some $a\in R/\lb R$ such that ${\prod_{k=0}^{p-1}(a-k\mu)}=0$.
But, since $p$ and $\mu^{p-1}$ are zero in $R/\lb R$, the last
condition is equivalent to $a^p=0$.  On the other hand, since
$a^p=\mu^p= 0\in R/\lb R$, then $a^i\mu^{p-i}=0$ for any
$i=0,\dots,p$. Therefore by \eqref{eq:E_p(a,mu;T)}, $E_p(a,\mu;T)$
is a polynomial  of degree at most $p-1$. Moreover it satisfies
$E_p(a,\mu;U)E_p(a,\mu;V)=E_p(a,\mu;U+V+\mu
 UV)$ and the coefficient of $T$ in
$E_p(a,\mu;T)$ is $a$. Hence
$E_p(a,\mu;T)=1+\sum_{i=1}^{p-1}\frac{\prod_{k=0}^{i-1}(a-k\mu)}{i!}T^i=F(T)$.


We now prove that  $(\xi^0_{R/\lb R})_p$ is injective. By
\eqref{eq:phi*:W(R)too W(R)}, \eqref{rem: tilde V per p=2},
\eqref{eq:psi per p=2},  and \eqref{eq:succ. esatta per hom verso
gm} its kernel is
$$T((R/\lb R)^{\fr})\cap \bigg\{
[\frac{p}{\mu^{p-1}}]\textit{\textbf{b}}+ V(\textit{\textbf{b}})
;\bbf{b} \in {\widehat{W}(R/\lb R)}^{\fr}\bigg\}.$$ Let us now
suppose that there exist $\bbf{b}=(b_0,b_1,\dots)\in
{\widehat{W}(R/\lb R)}^{\fr}$ and $ a\in (R/\lb R)^{\fr}$ such
that $[\frac{p}{\mu^{p-1}}]\textit{\textbf{b}}+
V(\textit{\textbf{b}})=[a]$.
 It follows by the definition of Witt vector ring that
\begin{equation}\label{eq:p mu^(p-1) b=...b_0 I}
[\frac{p}{\mu^{p-1}}]\textit{\textbf{b}}=(\frac{p}{\mu^{p-1}}b_0,
\dots, (\frac{p}{\mu^{p-1}})^{p^j} b_j,\dots),\end{equation} and
\begin{equation}\label{eq:a-V(b)}
[a]-V(\textit{\textbf{b}})=(a,-b_0,-b_1,\dots).
\end{equation}


Since $\bbf{b} \in {\widehat{W}(R/\lb R)} $, there exists $r\ge 0$
such that $b_j=0$ for any $j\ge r$. Moreover, comparing
\eqref{eq:p mu^(p-1) b=...b_0 I} and \eqref{eq:a-V(b)}, it follows
\begin{align*}
&(\frac{p}{\mu^{p-1}})^{p^{j+1}}b_{j+1}=-b_j \quad\text{ for } j\ge 0\\
&\frac{p}{\mu^{p-1}}b_0= a.
\end{align*}
Hence $b_j=a=0$ for any $j\ge 0$. It follows that $(\xi^0_{R/\lb
R})_p$ is injective.

\begin{tabular}{|c|}
  \hline
$char(R)=p$,  or    $char(R)=0$,  $p>2$ \text{ and }$(p-1)v(\mu)< v(\lb) .$   \\
 \hline
\end{tabular}
Since $p=0\in R/\lb R$ then $\fr V=V\fr$. Hence, using the fact
that $V$ is injective,  it is straightforward to prove that
$${V^{-1}\big(\widehat{W}(R/\lb
R)}^{\fr-[{\mu^{p-1}}])}\big)={\widehat{W}(R/\lb
R)}^{\fr-[{\mu^{p(p-1)}}]}.$$
Moreover if $char(R)=0$ then 
$$v(p)-(p-1)v(\mu)>v(p)-v(\lb)\ge (p-2)v(\lb)\ge v(\lb).$$
So it follows, using also  \eqref{eq:phi*:W(R)too W(R)},
\eqref{eq:psi per p=2} and \eqref{eq:succ. esatta per hom verso
gm},   that
\begin{align*}
(R/\lb R)^{\fr-[\mu^{p-1}]}\too \widehat{W}(R/\lb
R)^{\fr-[\mu^{p-1}]}/\bigg\{ V(\textit{\textbf{b}});\bbf{b} \in
{\widehat{W}(R/\lb R)}^{\fr-[{\mu^{p(p-1)}}]}\bigg\}\too
\Hom_{S_\lb-gr}({\gmx{1}},{\gmSl})
\end{align*}
are isomorphisms, where the first morphism is induced by the
Teichm\"{u}ller map, and the second one by $\xi^0_{R/\lb R}$. The
composition is nothing else but $(\xi^0_{R/\lb R})_p$.

Now let us consider $E_p(a,\mu;T)\in
\Hom_{S_\lb-gr}({\gmx{1}},{\gmSl})$. We observe that {$$T^p=0\in
(R/\lb R[T])/(\frac{(1+\mu T)^p-1}{\mu^p}).$$} If $char(R)=p$ this
is clear. While if $char(R)=0$ then
$T^p=-\frac{p}{\mu^{p-1}}(T+\frac{p-1}{2}\mu
T^2+\dots+\mu^{p-2}T^{p-1})$. But, by hypothesis,
$v(\frac{p}{\mu^{p-1}})> v(\lb)$. Therefore $E_p(a,\mu;T)$ is a
polynomial of degree $p-1$. Reasoning as in the previous case we
have that
$E_p(a,\mu;T)=1+\sum_{i=1}^{p-1}\frac{\prod_{k=0}^{i-1}(a-k\mu)}{i!}T^i=F(T)$.

\begin{tabular}{|c|}
  \hline
    $char(R)=0$,  $p=2$ and $v(\mu)< v(\lb). $   \\
 \hline
\end{tabular}
 Since $ T^2=-\frac{2}{\mu} T\in
R[G_{\mu,1}]$, it follows that $\mu T^2 =0 \in R/\lb
R[G_{\mu,1}]$. So  $a^2=\mu a \in R/\lb R$ implies, by
\eqref{eq:E_p(a,mu;T)},
$$
E_p(a,\mu;T)=E_p(aT)=1+ a T\in \Hom_{S_\lb-gr}(G_{\mu,1},\gmSl)
$$
Thus  $(\xi^0_{R/\lb R})_p(a)= 1+ a T$. We now prove that
$(\xi^0_{R/\lb R})_p$ is a morphism of groups. Let $a,b\in (R/\lb
R)^{\fr-[\mu]}$. We observe that, since $v(\mu)<v(\lb)$, this
implies $\mu |a^2$ and $\mu| b^2$. Hence $\mu| ab$.  Then
$$
(1+a T)(1+ bT)=1+(a+b)T+ ab T^2=1+ (a+b- \frac{2}{\mu} ab) T= 1+
(a+b) T.
$$
It is easy to check that the above morphism is in fact an
isomorphism.

 (iii) First of all we remark that for any $\bbf{a}=(a_0, \dots,
a_j, \dots)\in \widehat{W}(R/\lb R)^{\fr-[\mu^{p-1}]}$ we have
$$
\bbf{a}=\sum_{j=0}^{\infty}V^j([a_j]).
$$
It is clear that for any $a\in R/\lb R$ we have $i^*([a])=a$.
While, by \eqref{eq:phi*:W(R)too W(R)}, \eqref{eq:psi per p=2}
 and
\eqref{eq:succ. esatta per hom  verso gm}, it follows that for any
$\bbf{b}\in \widehat{W}(R/\lb R)^{\fr-[\mu^{p-1}]}$

\begin{align*}
i^*{V(\bbf{b})}=-i^*([\frac{p}{\mu^{p-1}}]\bbf{b}) &\text{ if }
char(R)=0, \\
i^*{V(\bbf{b})}=0 & \text{ if } char(R)=p.
\end{align*}
Hence, if $char(R)=0$,
$i^*({V^j(\bbf{b})})=(-1)^ji^*([\prod_{r=0}^{j-1}(\frac{p}{\mu^{p-1}})^{p^r}]\bbf{b})$
for any $j\ge 1$ and then it follows that
\begin{align*}
i^*(\bbf{a})&=i^*(\sum_{j=0}^{\infty}V^j([a_j]))\\
            &=\sum_{j=0}^{\infty}i^*(V^j([a_j]))\\
            &=a_0+\sum_{j=1}^{\infty}(-1)^j(\prod_{r=0}^{j-1}(\frac{p}{\mu^{p-1}})^{p^r})a_j.
\end{align*}
While, if $char(R)=p$, we have
$$
i^*(\bbf{a})=i^*(\sum_{j=0}^{\infty}V^j([a_j]))=i^*([a_0])=a_0.
$$

\end{proof}
    \subsection{Explicit description of $\delta$}\label{par:delta}
Let $\mu,\lb \in \piR$ and, if $char(R)=0$, $(p-1)v(\mu)\le v(p)$.
The map
$$
\delta:\Hom_{S_\lb-gr}({\gmx{1}},{\gmSl}){\too}
\Ext^1_S(\gmx{1},\g^{(\lb)})
$$
can also be explicitly described. 
  We  have the following
commutative diagram
$$
\xymatrix{\Hom_{S_\lb-gr}(\gmu,{\gmSl})\ar[r]^{i^*}\ar[d]^{\alpha}&
\Hom_{S_\lb-gr}({G_{\mu,1}},{\gmSl})\ar[d]^{\delta}\ar[r]&0\\
\Ext^1_S(\gmu,\g^{(\lb)})\ar[r]^{i^*}&\Ext^1_S(\gmx{1},\g^{(\lb)})}
$$
where the first horizontal map is surjective by
\ref{lem:suriettività mappa tra hom}(i). So, given  $$F(T)\in
\Hom_{S_\lb-gr}({\gmx{1}},{\gmSl}),$$ we can choose a representant
in  $\Hom_{S_\lb-gr}({\gmu},{\gmSl})$ which we denote again by
$F(T)$ for simplicity.  Let $\widetilde{F}(T)\in R[S]$ any its
lifting. Then $\delta$ is defined by
$$
F(T)\longmapsto
[\widetilde{\clE}^{(\mu,\lb;\widetilde{F})}]:=i^*([\clE^{(\mu,\lb;\widetilde{F})}])=i^*(\alpha(F(T))).
$$
We observe that $\widetilde{\clE}^{(\mu,\lb;\widetilde{F})}$ is
the subgroup scheme of $\clE^{(\mu,\lb;\widetilde{F})}$, defined
as a scheme by
$$
\widetilde{\clE}^{(\mu,\lb;\widetilde{F})}=\Sp\big(R[T_1,T_2,(\widetilde{F}(T_1)+\lb
T_2)^{-1}]/\frac{(1+\mu T_1)^p-1}{\mu^p}\big).
$$

%
The above extension does not depend on the choice of the lifting
since the same is true for $[{\clE}^{(\mu,\lb;\widetilde{F})}]$.
We have therefore proved the following proposition.
\begin{prop}\label{prop:ker alpha=clE}Let  $\mu,\lb\in \pi R\setminus\{0\}$ and, if $char(R)=0$, $(p-1)v(\mu)\le v(p)$.
Then $\delta$ induces an isomorphism 
\begin{align*}
 \Hom_{S_\lb-gr}({\gmx{1}},{\gmSl})/r_{\lb}(\Hom_{S-gr}(\gmx{1},\gmS))&\too  \{[\widetilde{\clE}^{(\mu,\lb;\widetilde{F})}];F\in \Hom_{S_\lb-gr}({\gmx{1}},{\gmSl})\}\\
F(T)&\longmapsto [\widetilde{\clE}^{(\mu,\lb;\widetilde{F})}]
\end{align*}
where $\widetilde{F}(T)$ is a lifting of $F(T)$.
\end{prop}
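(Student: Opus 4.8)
The plan is to read off the isomorphism directly from the long exact sequence \eqref{eq:ker alpha}, the only substantive task being the explicit identification of the image of $\delta$. Indeed, by exactness of \eqref{eq:ker alpha} the kernel of $\delta$ is precisely $r_\lb(\Hom_{S-gr}(G_{\mu,1},\gmS))$, so $\delta$ automatically factors through an \emph{injection} of the quotient $\Hom_{S_\lb-gr}(G_{\mu,1},\gmSl)/r_\lb(\Hom_{S-gr}(G_{\mu,1},\gmS))$ onto its image. Everything then reduces to showing that $\delta(F)=[\widetilde{\clE}^{(\mu,\lb;\widetilde F)}]$, so that the image is exactly the set $\{[\widetilde{\clE}^{(\mu,\lb;\widetilde F)}]\}$.

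First I would compare the connecting map $\delta$ for $G_{\mu,1}$ with the connecting map $\alpha$ for $\gmu$ of Theorem~\ref{teo:ss ext1}. Both arise from the $\Hom$--$\Ext$ long exact sequence attached to \eqref{eq:succ esatta glb in gm}, applied to the first arguments $G_{\mu,1}$ and $\gmu$ respectively. Naturality of the connecting homomorphism with respect to the contravariant pullback along the inclusion $i\colon G_{\mu,1}\hookrightarrow\gmu$ then yields the identity
\[
\delta\circ i^*=i^*\circ\alpha ,
\]
where on the source $i^*\colon\Hom_{S_\lb-gr}(\gmu,\gmSl)\to\Hom_{S_\lb-gr}(G_{\mu,1},\gmSl)$ and on the target $i^*\colon\Ext^1_S(\gmu,\glb)\to\Ext^1_S(G_{\mu,1},\glb)$. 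Since the source $i^*$ is surjective (part (i) of the Proposition above), every $F$ admits a representative $\widehat F\in\Hom_{S_\lb-gr}(\gmu,\gmSl)$ with $i^*\widehat F=F$; hence, for any lifting $\widetilde F\in R[T]$ of $\widehat F$,
\[
\delta(F)=\delta(i^*\widehat F)=i^*\bigl(\alpha(\widehat F)\bigr)=i^*\bigl[\clE^{(\mu,\lb;\widetilde F)}\bigr].
\]

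Next I would compute this pulled-back class. Pulling back the extension $0\to\glb\to\clE^{(\mu,\lb;\widetilde F)}\to\gmu\to 0$ along $i$ amounts to forming the fibre product $\clE^{(\mu,\lb;\widetilde F)}\times_{\gmu}G_{\mu,1}$, i.e.\ to imposing $P_{\mu,1}(T_1)=\frac{(1+\mu T_1)^p-1}{\mu^p}=0$ on the Hopf algebra of $\clE^{(\mu,\lb;\widetilde F)}$. On this closed subscheme $1+\mu T_1$ is already invertible (with inverse $(1+\mu T_1)^{p-1}$), so the generator $(1+\mu T_1)^{-1}$ may be dropped, leaving exactly
\[
R[T_1,T_2,(\widetilde F(T_1)+\lb T_2)^{-1}]/(P_{\mu,1}(T_1)),
\]
which is the algebra of $\widetilde{\clE}^{(\mu,\lb;\widetilde F)}$. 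Thus $\delta(F)=[\widetilde{\clE}^{(\mu,\lb;\widetilde F)}]$, and this also shows independence of the chosen lift, since $[\clE^{(\mu,\lb;\widetilde F)}]$ is independent of it. Combining with the first paragraph gives the asserted isomorphism.

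The hard part will be the first step: one must make sure that the map $\alpha$ of Theorem~\ref{teo:ss ext1} genuinely \emph{is} the connecting homomorphism of \eqref{eq:succ esatta glb in gm} (so that naturality in the first variable applies) and that the contravariant functoriality of $\delta$ is compatible, on the nose, with the chosen geometric representatives $\clE^{(\mu,\lb;\widetilde F)}$ rather than only up to an unspecified sign. Once that commutative square is secured, the remaining ingredients are just the exactness of \eqref{eq:ker alpha} and the routine fibre-product computation above.
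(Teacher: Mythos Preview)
Your proposal is correct and follows essentially the same approach as the paper: the paper also uses the commutative square $\delta\circ i^*=i^*\circ\alpha$ (naturality of the connecting map along $i:G_{\mu,1}\hookrightarrow\gmu$), invokes the surjectivity of $i^*$ from \ref{lem:suriettivit� mappa tra hom}(i), and identifies $i^*[\clE^{(\mu,\lb;\widetilde F)}]$ with $[\widetilde{\clE}^{(\mu,\lb;\widetilde F)}]$ as the fibre product. Your write-up is in fact slightly more explicit than the paper's (you spell out the injectivity from the exactness of \eqref{eq:ker alpha} and the reason $(1+\mu T_1)^{-1}$ can be dropped), and your closing caveat about signs and the identification of $\alpha$ with the connecting map is a fair point, though the paper treats this as standard functoriality.
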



We finally remark that, by \eqref{eq:ker alpha}, we also have that
$\ker(\alpha^{(\lb)}_*)\In \Ext^1_S(\gmx{1},\glb)$ is nothing else
but the group $$ \{[\widetilde{\clE}^{(\mu,\lb;\widetilde{F})}]
;F\in \Hom_{S_\lb-gr}({\gmx{1}},{\gmSl})\}.$$




\subsection{Description of $\Ext^1_S(\gmx{1},\glx{1})$}\label{par:Ext1(gmu,glb)}

We finally have all the ingredients to  give a description of the
group $\Ext^1_S(\gmx{1},\glx{1})$. In the rest of the section we
will suppose, if $char(R)=0$, that $(p-1)v(\mu)\le v(p)$ and
$(p-1)v(\lb)\le v(p)$. We distinguish, for clarity, four cases
$$
\begin{array}{ll}
    \Ext^1_S(\mu_{p,S},\mu_{p,S}); \\
    \Ext^1_S(\gmx{1},\mu_{p,S}), & \hbox{with $\mu \in \pi R \setminus\{0\}$;} \\
    \Ext^1_S(\mu_{p,S},\glx{1}), & \hbox{with  $\lb\in \pi R\setminus\{0\}$;} \\
    \Ext^1_S(\gmx{1},\glx{1}), & \hbox{with $\mu,\lb \in \pi R \setminus\{0\}$.} \\
\end{array}%
$$


The first  three cases are easy. The first two cases have already
been treated in \cite{SS8} and the third one can be  obtained with
an argument identical to that one used for the proof of the first
case. We report here the proofs.
\begin{prop}\label{eq:ext1(mup,mup)}
We have the following exact sequences. 
\begin{itemize}
    \item[(i)] $0\too\Z/p\Z\too \Ext^1_{S}(\mu_{p,S},\mu_{p,S})\too
H^1(S,\Z/p\Z)\too 0$;
    \item[(ii)] if $v(\mu)>0$,  $0\too\Z/p\Z\too \Ext^1_{S}(\gmx{1},\mu_{p,S})\too
H^1(S,G_{\mu,1}^\vee)\too 0$;
    \item[(iii)] if $v(\lb)>0$, $0\too \Ext^1_{S}(\mu_{p,S},\glx{1})\too
    H^1(S,\Z/p\Z)\too H^1(S_{\lb},\Z/p\Z)$.
\end{itemize}
\end{prop}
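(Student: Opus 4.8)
The plan is to obtain all three sequences by feeding the two fundamental short exact sequences into the functors $\Ext^\bullet_S(-,-)$ and reading off the resulting long exact sequences; the recurring inputs are the identification $\Ext^1_S(G,\gmS)\simeq H^1(S,G^\vee)$ of \eqref{eq:ext1=H^1 per glbn}, the $\Hom$-computation \eqref{eq:Hom(glbn,glmn)}, and the fact that every group in sight is killed by $p$.

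For (i) and (ii) I would apply the covariant functor $\Ext^\bullet_S(G,-)$, with $G=\mup$ respectively $G=\gmx{1}$, to the Kummer sequence $0\to\mup\to\gmS\xrightarrow{p}\gmS\to0$. The resulting six-term sequence reads
$$\Hom_S(G,\gmS)\xrightarrow{p}\Hom_S(G,\gmS)\to\Ext^1_S(G,\mup)\to\Ext^1_S(G,\gmS)\xrightarrow{p}\Ext^1_S(G,\gmS).$$
Since each group is $p$-torsion, multiplication by $p$ acts as zero throughout, so both outer maps vanish and the middle term becomes an extension of $\Ext^1_S(G,\gmS)$ by $\Hom_S(G,\gmS)$. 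By \eqref{eq:ext1=H^1 per glbn} one has $\Ext^1_S(G,\gmS)\simeq H^1(S,G^\vee)$, which is $H^1(S,\Z/p\Z)$ for $G=\mup$ and $H^1(S,G_{\mu,1}^\vee)$ for $G=\gmx{1}$. Finally $\Hom_S(G,\gmS)=G^\vee(S)$; writing $\gmS=\g^{(\nu)}$ with $v(\nu)=0$, \eqref{eq:Hom(glbn,glmn)} gives $\Z/p\Z$ in both cases (as $v(\mu)\ge0$). This yields exactly (i) and (ii), which are also \cite{SS8}.

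For (iii) the situation is subtler because the second argument is the finite group $\glx{1}$ rather than $\gmS$, so I would combine two sequences. First, applying $\Ext^\bullet_S(\mup,-)$ to the defining sequence $(\Lambda)$ and using $\Hom_S(\mup,\glbp)=0$ from \eqref{eq:Hom(glbn,glmn)} (since $v(\lb^p)>0$), I get that $i_*$ embeds $\Ext^1_S(\mup,\glx{1})$ into $\Ext^1_S(\mup,\glb)$ with image $\ker\psi_{\lb,1*}$. Second, applying $\Ext^\bullet_S(\mup,-)$ to \eqref{eq:succ esatta glb in gm} and using $\Hom_S(\mup,\glb)=0$ together with the fact that restriction $\mup^\vee(S)\to\mup^\vee(S_\lb)$ is an isomorphism of constant groups, I find that $\alpha^{(\lb)}_*$ embeds $\Ext^1_S(\mup,\glb)$ into $\Ext^1_S(\mup,\gmS)\simeq H^1(S,\Z/p\Z)$ with image the kernel of the restriction $r\colon H^1(S,\Z/p\Z)\to H^1(S_\lb,\Z/p\Z)$, where I identify the contribution of the third term of \eqref{eq:succ esatta glb in gm} with $H^1(S_\lb,\Z/p\Z)$ via the adjunction for the closed immersion $i_\lb$.

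The crux is to show $\psi_{\lb,1*}=0$, equivalently that $i_*$ is onto. For this I would invoke the commutative square relating $\alpha^{(\lb)}$, $\alpha^{(\lb^p)}$, $\psi_{\lb,1}$ and multiplication by $p$ on $\gmS$: it gives $\alpha^{(\lb^p)}_*\circ\psi_{\lb,1*}=p\circ\alpha^{(\lb)}_*$, and since $\Ext^1_S(\mup,\gmS)$ is $p$-torsion the right-hand side vanishes; as $\alpha^{(\lb^p)}_*$ is injective (same argument as for $\alpha^{(\lb)}_*$, using $v(\lb^p)>0$) this forces $\psi_{\lb,1*}=0$. Consequently $i_*$ is an isomorphism and $\alpha^{(\lb)}_*\circ i_*$ identifies $\Ext^1_S(\mup,\glx{1})$ with $\ker r$, which is precisely the exact sequence (iii). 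I expect this last step---pinning down $\psi_{\lb,1*}$ through the $p$-torsion vanishing---to be the main obstacle, the remainder being bookkeeping with the long exact sequences.
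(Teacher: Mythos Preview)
Your argument is correct and follows essentially the same route as the paper: parts (i)--(ii) are derived from the Kummer sequence together with the identification \eqref{eq:ext1=H^1 per glbn} and the $p$-torsion vanishing, while part (iii) combines the sequences \eqref{eq:ker phi} and \eqref{eq:ker alpha} (for $G=\mu_{p,S}$), using surjectivity of $r_\lb$ to get injectivity of $\alpha^{(\lb)}_*$ and the commutative square with $p_*$ to kill $\psi_{\lb,1*}$. The only cosmetic difference is the order in which you invoke the two long exact sequences in (iii); the paper first establishes $\Ext^1_S(\mu_{p,S},\glb)\simeq\ker r$ and then shows $\psi_{\lb,1*}=0$ to identify this with $\Ext^1_S(\mu_{p,S},\glx{1})$, whereas you first embed via $(\Lambda)$ and then compute the target.
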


%
\begin{proof}
We set $G=G_{\mu,1}$, with $v(\mu)>0$, or $G=\mu_{p,S}$. The
Kummer sequence
$$
1\too \mu_{p,S}\too \gmS\on{p}{\too} \gmS\too 1
$$
yields, using the isomorphism \eqref{eq:ext1=H^1 per glbn},  an
exact sequence
$$
0 {\too} \Hom_{S-gr}(G,\gmS)\too \Ext^1_S(G,\mu_{p,S})\too
H^1(S,G^\vee)\on{p}{\too} H^1(S,G^\vee).
$$
This is a particular case of \eqref{eq:ker phi}. Since $G$ is
annihilated by $p$ then we have the exact sequence
\begin{equation}\label{eq:succ esatta particolare} 0{\too} \Hom_{S-gr}(G,\gmS)\too \Ext^1_S(G,\mu_{p,S})\too
H^1(S,G^\vee)\too 0.
\end{equation}
  The proofs of (i) and (ii) follow,
since, by \eqref{eq:Hom(glbn,glmn)}, we have
$$
\Hom_{S-gr}(G_{\mu,1},\gmS)\simeq
\Hom_{S-gr}(\mu_{p,S},\gmS)\simeq \Z/p\Z.
$$
We now prove (iii).
   Combining
the isomorphism \eqref{eq:ext1=H^1 per glbn} and the exact
sequence \eqref{eq:ker alpha} we obtain the exact sequence
\begin{align*}
 \Hom_{S-gr}(G_{\mu,1},\gmS)\on{r_{\lb}}{\too}
 \Hom_{S_\lb-gr}(G_{\mu,1},\gmSl)&\on{\delta}{\too}\\
\too \Ext^1_S(G_{\mu,1},\glb)\too& H^1(S,G_{\mu,1}^{\vee}) \too
H^1({S_\lb},G_{\mu,1}^{\vee}).
\end{align*}
If $\mu$ is invertible in $R$  then  $G_{\mu,1}\simeq \mu_{p,S}$
and we obtain
$$
 \Ext^1_S(\mu_{p,S},\glb)\simeq \ker ( H^1(S,\Z/p\Z)
\too H^1({S_\lb},\Z/p\Z))
$$
since the reduction map $r_\lb:
\Hom_{S-gr}(\mu_{p,S},\gmS)\on{}{\too}
 \Hom_{S_\lb-gr}(\mu_{p,S_\lb},\gmSl)$ is surjective.  On the other hand let us  consider the
 commutative diagram
\begin{equation*}
\xymatrix{
\Ext^1_S(\mu_{p,S},\glb)\ar^{\alpha_*^{(\lb)}}[d]\ar^{\psi_{{\lb,1}_*}}[r]&
\Ext^1_S(\mu_{p,S},\glbp)\ar^{\alpha_*^{(\lb^p)}}[d]\\
     \Ext^1_S(\mu_{p,S},\gmS)\ar^{p_*}[r]&
\Ext^1_S(\mu_{p,S},\gmS)  }    \end{equation*} The  morphism $p_*$
is trivial since $\Ext^1$ is a bi-additive functor and $\mu_{p,S}$
has order $p$.
 From \eqref{eq:ker
alpha con r lambda} it follows that
$\alpha^{(\lb^p)}_*:\Ext^1_S(\mu_{p,S},\glbp)\too
\Ext^1_S(\mu_{p,S},\gmS)$ is injective, therefore
$\psi_{{\lb,1}_*}$ is the zero map. Now, by
\eqref{eq:Hom(glbn,glmn)} we have $\Hom_{S-gr}(\mu_{p,S},\glb)=0$;
 hence, from  the exact sequence \eqref{eq:ker alpha} it follows
$$\Ext^1_S(\mu_{p,S},\glb)\simeq \Ext^1_S(\mu_{p,S},\glx{1}).$$
So we are done.
\end{proof}
\begin{rem}\label{rem:Ext(mup,mup) ogni base} The proof of (i)  works over any base, not necessarily
a d.v.r.
\end{rem}

\begin{rem}\label{rem: clE sono lo Z/p} 
By \S\ref{sec:two exact sequences} the group $\Z/p\Z$ which
appears in the exact sequence (i) corresponds to the group of
extensions $\{[\clE_{j,S}]; j=0,\dots,p-1\}$ and that one of
 (ii) corresponds to
$\{[\clE^{(\mu,\lb;1, j)}]; j=0,\dots,p-1\}$, with $v(\mu)>0$ and
$v(\lb)=0$.

\end{rem}



\begin{lemdefn}\label{def:clE(mu,lb,F,i)}
Let $\mu,\lb\in R\setminus\{0\}$. Let us assume $F(T)\in (R/\lb
R)[T]$, $j\in \Z$ and
\begin{itemize}
    \item [(a)]   $F(0)=1$,
    \item [(b)]   $F(U)F(V)= F(U+V+\mu UV)$,
    \item [(c)]   $F(T)^p\equiv (1+\mu T)^j \mod (\lb^p, \frac{(1+\mu
    T)^p-1}{\mu^p})$.
\end{itemize}
Let $\widetilde{F}(T)\in R[T]$ be any its lifting.  Then
$$
\clE^{(\mu,\lb;\widetilde{F},j)}:=\Sp\bigg(R[T_1,T_2]/\big(\frac{(1+\mu
T_1)^p-1}{\mu^p},\frac{({\widetilde{F}}(T_1)+\lb T_2)^p(1+\mu
T_1)^{-j}-1}{\lb^p}\big)\bigg),
$$
is a closed subgroup scheme of $\clE^{(\mu,\lb;\widetilde{F})}$.
\end{lemdefn}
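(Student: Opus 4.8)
The plan is to realize $\clE^{(\mu,\lb;\widetilde{F},j)}$ as the scheme-theoretic kernel of a homomorphism out of $\clE^{(\mu,\lb;\widetilde{F})}$, so that being a closed subgroup scheme is automatic and the work reduces to two points: that the two defining relations are \emph{integral} (well defined over $R$), and that they really cut out a kernel. Write $Y_1:=1+\mu T_1$ and $Z:=\widetilde{F}(T_1)+\lb T_2$ for the two natural coordinates on the Hopf algebra $A$ of $\clE^{(\mu,\lb;\widetilde{F})}$, so that the relations defining $\clE^{(\mu,\lb;\widetilde{F},j)}$ read $P_1:=\frac{Y_1^p-1}{\mu^p}=0$ and $P_2:=\frac{Z^pY_1^{-j}-1}{\lb^p}=0$. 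First I would check directly from the comultiplication, counit and coinverse of $\clE^{(\mu,\lb;\widetilde{F})}$ that $Y_1$ and $Z$ are grouplike: a short formal computation gives $\Delta(Y_1)=Y_1\pt Y_1$ and $\Delta(Z)=Z\pt Z$, while $\varepsilon(Y_1)=\varepsilon(Z)=1$ and the coinverse sends $Y_1\mto Y_1^{-1}$, $Z\mto Z^{-1}$. Here conditions (a) and (b) are exactly what guarantee that $A$ is a genuine $R$-Hopf algebra in the first place: (a) makes the counit value $\frac{1-\widetilde{F}(0)}{\lb}$ integral (since $\widetilde F(0)\equiv 1\bmod\lb$), and (b) makes the $\lb^{-1}$-term in the comultiplication of $T_2$ integral, as in the Sekiguchi--Suwa construction. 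Consequently $Y_1^p$ and $W:=Z^pY_1^{-j}$ are again grouplike.

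Next I would use these grouplikes to present the subscheme as a kernel. The projection $\clE^{(\mu,\lb;\widetilde{F})}\too\gmu$, $T\mto T_1$, composed with $\psi_{\mu,1}$, is a homomorphism that pulls the coordinate of $\gmup$ back to exactly $P_1$; its kernel $\clE'$ is the closed subgroup scheme $\Sp(A/(P_1))$, the division by $\mu^p$ being automatic because the coordinate of $\gmup$ is $1+\mu^pT'$. Over $\clE'$ one has $Y_1^p=1$, so $Y_1$, and hence $W$, is a unit; provided $W\equiv 1\bmod\lb^p$ in $A/(P_1)$, the grouplike unit $W$ defines a homomorphism $g:\clE'\too\glbp$ with $g^*(1+\lb^pT'')=W$, i.e. $g^*(T'')=P_2$. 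Then $\clE^{(\mu,\lb;\widetilde{F},j)}=\ker(g)=\Sp(A/(P_1,P_2))$ is a closed subgroup scheme of $\clE'$, hence of $\clE^{(\mu,\lb;\widetilde{F})}$, and the identification $A/(P_1,P_2)\simeq R[T_1,T_2]/(P_1,P_2)$ (the localizations at $Y_1,Z$ become superfluous once $Y_1^p=1$ and $Z^p=Y_1^j$) matches the coordinate ring in the statement.

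The main obstacle is precisely the integrality $W-1\in\lb^p\,A/(P_1)$, which is where the hypotheses are consumed. Expanding $Z^p=\sum_{k=0}^p\binom{p}{k}\widetilde{F}(T_1)^{p-k}\lb^kT_2^k$, the term $k=p$ is divisible by $\lb^p$, and for $1\le k\le p-1$ one has $v\big(\binom{p}{k}\lb^k\big)\ge v(p)+kv(\lb)\ge(p-1+k)v(\lb)\ge pv(\lb)$ by the standing hypothesis $(p-1)v(\lb)\le v(p)$ (and trivially in characteristic $p$, where $p=0$), so these cross terms vanish modulo $\lb^p$ as well. Hence $Z^p\equiv\widetilde{F}(T_1)^p\bmod\lb^p$, and condition (c) gives $\widetilde{F}(T_1)^p\equiv(1+\mu T_1)^j=Y_1^j\bmod(\lb^p,P_1)$; multiplying by the unit $Y_1^{-j}$ yields $W\equiv 1\bmod(\lb^p,P_1)$, which is exactly the required integrality and makes both $g$ and $P_2$ well defined. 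One could equally phrase the whole argument as checking that $(P_1,P_2)$ is a Hopf ideal of $A$, using $\Delta(g_i-1)=g_i\pt(g_i-1)+(g_i-1)\pt 1$ and antipode $(g_i-1)\mto -g_i^{-1}(g_i-1)$ for the grouplikes $g_1=Y_1^p$, $g_2=W$; the grouplike computation and this same integrality input are the only nontrivial ingredients.
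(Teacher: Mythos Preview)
Your argument is correct and supplies precisely the details the paper leaves implicit: the paper's own proof reads in its entirety ``The proof is straightforward.'' Your route via the grouplike elements $Y_1=1+\mu T_1$ and $Z=\widetilde F(T_1)+\lb T_2$, realizing $\clE^{(\mu,\lb;\widetilde F,j)}$ as the kernel of a homomorphism to $\gmup\times\glbp$, is the natural unpacking of that word and matches the diagrams the paper draws immediately afterwards. Your use of the standing hypothesis $(p-1)v(\lb)\le v(p)$ in the integrality step is legitimate: the Lemma--Definition sits inside the subsection that opens by imposing exactly this hypothesis for the remainder of the section, and the hypothesis is genuinely needed (for $char(R)=0$ and $k=1$ the cross term $p\lb$ need not lie in $\lb^p R$ without it).
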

\begin{proof}The proof is straightforward.
\end{proof}
\begin{rem}If $F(T)=1$ the definition coincides with
\eqref{eq:E(mu,lb,1,j)}.
\end{rem}
\begin{rem}In the above definition the integer $j$ is uniquely
determined by $F(T)$ if and only if $\lb^p\nmid \mu$.
\end{rem}
\begin{rem}\label{rem:F in Hom}
Let $\lb \in \piR$. We stress that  (a) and (b) means that
$F(T)\in \Hom_{S_\lb-gr}(\gmu,{\gmSl})$.
\end{rem}

\vspace{.5cm}

The closed immersion
$\clE^{(\mu,\lb;\widetilde{F},j)}\hookrightarrow
\clE^{(\mu,\lb;\widetilde{F})}$ induces the following commutative
diagrams of exact rows:
\begin{equation*}
\begin{aligned}
\xymatrix@1{0\ar[r]&{G_{\lb,1}}\ar[r]\ar[d]&\clE^{(\mu,\lb,\widetilde{F},j)}\ar[d]\ar[r]&G_{\mu,1}\ar[r]\ar[d]&0\\
                     0\ar[r]&{\glb}\ar[r]&\clE^{(\mu,\lb,\widetilde{F})}\ar[r]&\gmu \ar[r]&0}
\end{aligned}
\end{equation*}
and
\begin{equation*}
\begin{aligned}
\xymatrix@1{0\ar[r]&{G_{\lb,1}}\ar[r]\ar^{\alpha^{(\lb)}}[d]&\clE^{(\mu,\lb;\widetilde{F},j)}\ar^{\alpha^{(\mu,\lb,\widetilde{F})}}[d]\ar[r]&G_{\mu,1}\ar[r]\ar^{\alpha^{(\mu)}}[d]&0\\
                     1\ar[r]&{\muS}\ar[r]&\clE_{j,S}\ar[r]&\muS \ar[r]&1}
\end{aligned}
\end{equation*}
where $\alpha^{(\mu,\lb,\widetilde{F})}$ is a model map. Since the
 extension class does not depend on the choice of the lifting $\tilde{F}$, in the
following we will sometimes denote it  simply with
$[\clE^{(\mu,\lb;{F},j)}]$, omitting to specify the lifting. We
will use the same convention also for the extensions
$[\widetilde{\clE}^{(\mu,\lb;{F})}]$.

\begin{lem} Let $\mu,\lb \in R\setminus\{0\}$ and let $F(T)\in R/\lb R[T]$ be as in the lemma-defintion.
The group scheme $\clE^{(\mu,\lb;\widetilde{F},j)}$  is the kernel
of the isogeny
\begin{align*}
\psi^j_{\mu,\lb,\widetilde{F},\widetilde{G}}:\clE^{(\mu,\lb;\widetilde{F})}&\too \clE^{(\mu^p,\lb^p;\widetilde{G})}\\
          T_1&\longmapsto \frac{(1+\mu T_1)^p-1}{\mu^p}\\
          T_2&\longmapsto \frac{(\widetilde{F}(T_1)+\lb
T_2)^p(1+\mu T_1)^{-j}-\widetilde{G}(\frac{(1+\mu
T_1)^p-1}{\mu^p})}{\lb^p}
\end{align*}
for some $G(T)\in R/\lb^{p} R[T]$ which satisfies $(a),(b),(c)$ of
lemma-definition \ref{def:clE(mu,lb,F,i)}; and
$\widetilde{F}(T),\widetilde{G}(T)\in R[T]$ are liftings of $F(T)$
and $G(T)$ respectively.
\end{lem}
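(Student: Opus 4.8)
The plan is to build the target group scheme $\clE^{(\mu^p,\lb^p;\widetilde{G})}$ (equivalently, to produce $G$), then to check that the given formulas define a homomorphism of group schemes, and finally to identify its kernel. Write $A=R[T_1,T_2,\frac{1}{1+\mu T_1},\frac{1}{\widetilde{F}(T_1)+\lb T_2}]$ for the coordinate ring of $\clE^{(\mu,\lb;\widetilde{F})}$, put $f=1+\mu T_1$ and $g=\widetilde{F}(T_1)+\lb T_2$, and denote by $S_1,S_2$ the coordinates of the target. I may assume $\mu,\lb\in\piR$, since the cases $v(\mu)=0$ or $v(\lb)=0$ (where $G_{\mu,1}$ or $G_{\lb,1}$ is $\mu_{p,S}$, and $R/\lb^p R$ may vanish) are simpler. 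By the standing conventions, in unequal characteristic $(p-1)v(\mu)\le v(p)$ and $(p-1)v(\lb)\le v(p)$.

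Constructing $G$ is the heart of the matter. First I would record the congruence fact that, under $(p-1)v(\lb)\le v(p)$ (automatic if $\operatorname{char}R=p$), $x\equiv y\bmod\lb$ forces $x^{p}\equiv y^{p}\bmod\lb^{p}$: writing $x=y+\lb z$, each middle term $\binom{p}{i}y^{p-i}(\lb z)^{i}$ with $1\le i\le p-1$ has valuation at least $v(p)+iv(\lb)\ge pv(\lb)$. Applied to $x=g$, $y=\widetilde{F}(T_1)$ this gives $g^{p}\equiv\widetilde{F}(T_1)^{p}\bmod\lb^{p}A$, so that modulo $\lb^{p}$ the unit $g^{p}f^{-j}$ depends only on $T_1$. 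Next set $\bar H(T):=\widetilde{F}(T)^{p}(1+\mu T)^{-j}\bmod\lb^{p}$. Using that $F$ satisfies (a),(b) (Remark \ref{rem:F in Hom}), the identity $(1+\mu U)(1+\mu V)=1+\mu(U+V+\mu UV)$, and the same congruence fact, one checks $\bar H(0)=1$ and $\bar H(U)\bar H(V)=\bar H(U+V+\mu UV)$ in $R/\lb^{p}R$, so $\bar H\in\Hom_{S_{\lb^p}-gr}(\gmu,\gmSlp)$. Condition (c) says exactly that $\bar H\equiv 1\bmod(\lb^{p},P_{\mu,1}(T))$, i.e. $i^{*}(\bar H)=1$ for $i:G_{\mu,1}\hookrightarrow\gmu$. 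By the analogue of the exact sequence \eqref{eq:succ. esatta per hom verso gm} over $S_{\lb^p}$ one has $\ker i^{*}=\operatorname{im}\psi_{\mu,1}^{*}$, so there is $G\in\Hom_{S_{\lb^p}-gr}(\gmup,\gmSlp)$, i.e. $G\in R/\lb^{p}R[T]$ with $G(0)=1$ and $G(U)G(V)=G(U+V+\mu^{p}UV)$ (conditions (a),(b)), such that $G(P_{\mu,1}(T))=\bar H(T)$; condition (c) for $G$ is then a direct check.

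With this $G$ and any lift $\widetilde{G}$ I would verify that the formulas define a morphism of schemes: $\psi^{j*}(1+\mu^{p}S_1)=f^{p}$ and $\psi^{j*}(\widetilde{G}(S_1)+\lb^{p}S_2)=g^{p}f^{-j}$ are units of $A$, so the two localizations are respected, while the numerator of $\psi^{j*}(S_2)$ is $g^{p}f^{-j}-\widetilde{G}(P_{\mu,1}(T_1))\equiv\widetilde{F}(T_1)^{p}f^{-j}-\bar H(T_1)=0\bmod\lb^{p}A$, whence $\psi^{j*}(S_2)\in A$. That $\psi^{j}$ is a homomorphism can then be checked after inverting $\pi$: both source and target are $R$-flat, so their coordinate rings and all tensor powers embed into the generic fibre, and under the generic identifications with $\gmK^{2}$ given by $X_1=1+\mu T_1$, $X_2=g$ (resp. $Y_1=1+\mu^{p}S_1$, $Y_2=\widetilde{G}(S_1)+\lb^{p}S_2$), the map becomes $(X_1,X_2)\mapsto(X_1^{p},X_1^{-j}X_2^{p})$, i.e. the isogeny of \S\ref{sec:two exact sequences} defining $\clE_{j,S}$; comparing the two morphisms that express the homomorphism axiom after $\otimes_R K$ forces their equality already over $R$.

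Finally I would identify $\ker\psi^{j}$, whose coordinate ring is $A/J$ with $J=(\,\psi^{j*}(S_1),\ \psi^{j*}(S_2)-\tfrac{1-\widetilde{G}(0)}{\lb^{p}}\,)$, where $\psi^{j*}(S_1)=P_{\mu,1}(T_1)=\frac{(1+\mu T_1)^{p}-1}{\mu^{p}}$. Passing to $\bar A:=A/(P_{\mu,1}(T_1))$, which is $R$-free since $R[T_1]/(P_{\mu,1}(T_1))$ is free of rank $p$, the equality $\widetilde{G}(P_{\mu,1}(T_1))=\widetilde{G}(0)$ holds and $\lb^{p}$ is a nonzerodivisor, so the image of the second generator is the well-defined element $\frac{g^{p}f^{-j}-1}{\lb^{p}}$; by condition (c) this is exactly the relation appearing in Lemma-Definition \ref{def:clE(mu,lb,F,i)}. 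Since modulo it $f$ and $g$ become units (from $f^{p}=1$ and $g^{p}f^{-j}=1$), the localizations drop out and $A/J\cong R[T_1,T_2]/\big(\frac{(1+\mu T_1)^{p}-1}{\mu^{p}},\frac{(\widetilde{F}(T_1)+\lb T_2)^{p}(1+\mu T_1)^{-j}-1}{\lb^{p}}\big)=\clE^{(\mu,\lb;\widetilde{F},j)}$. The main obstacle is the construction of $G$: the passage from the $T_2$-dependent expression to a one-variable homomorphism, for which the hypothesis $(p-1)v(\lb)\le v(p)$ (killing the middle binomial terms modulo $\lb^{p}$) together with the exactness of \eqref{eq:succ. esatta per hom verso gm} and condition (c) are indispensable.
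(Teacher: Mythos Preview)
Your proposal is correct and follows essentially the same route as the paper. The key step---producing $G$---is identical: both use that condition (c) says $\bar H(T)=F(T)^p(1+\mu T)^{-j}$ lies in $\ker i^{*}$ on $\Hom_{S_{\lb^p}-gr}(\gmu,\gmSlp)$, and invoke exactness of the sequence obtained from $0\to G_{\mu,1}\to\gmu\to\gmup\to 0$ (the paper's \eqref{eq:ker i_*}) to write $\bar H=G\circ\psi_{\mu,1}$ with $G\in\Hom_{S_{\lb^p}-gr}(\gmup,\gmSlp)$. You supply considerably more detail than the paper, which after finding $G$ simply declares ``This implies the thesis'': you explicitly justify that $\bar H$ lands in $\Hom_{S_{\lb^p}-gr}(\gmu,\gmSlp)$ via the congruence $x\equiv y\bmod\lb\Rightarrow x^p\equiv y^p\bmod\lb^p$, verify that $\psi^{j*}(S_2)\in A$, check the homomorphism property by flatness and reduction to the generic fibre, and compute the kernel directly.
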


 \begin{proof}
If $v(\lb)=0$ we can assume $\tilde{F}(T)=1$. Therefore if we take
$\tilde{G}=1$ we are done. Let us assume $\lb \in \piR$. By
\ref{rem:F in Hom} we have  $F(T)\in
\Hom_{S_\lb-gr}(\gmu,{\gmSl})$. If we restrict this morphism to
$G_{\mu,1}$ we obtain an element of
$\Hom_{S_\lb-gr}({G_{\mu,1}},{\gmSl})$, which we denote again
$F(T)$ for simplicity. Now the condition (c) means $F(T)^p(1+\mu
T)^{-j}=1\in
\Hom_{S_{\lb^p}-gr}({\gmx{1}},{\mathbb{G}_{m,S_{{\lb^p}}}})$. Let
us consider the exact sequence over $S_{\lb^p}$
$$ 0\too G_{\mu,1}\on{i}{\too}
\gmu\on{\psi_{\mu,1}}{\too}\gmup\too 0.
$$
Applying to this exact sequence the functor
$\Hom_{S_{\lb^p}-gr}(\cdot,{\gmSlp})$ we obtain
\begin{equation}\label{eq:ker i_*}
\ker \bigg(i^*:\Hom_{S_{\lb^p}-gr}(\gmu,{\gmSlp})\too
\Hom_{S_{\lb^p}-gr}({G_{\mu,1}},{\gmSlp})\bigg)={\psi_{\mu,1}}_*\Hom_{S_{\lb^p}-gr}(\gmup,{\gmSlp}).
\end{equation}

Therefore condition (c) is equivalent to saying that there
exists $G(T)\in\Hom_{S_{\lb^p}-gr}({\gmup},{\gmSlp})$ with the
property that $F(T)^p(1+\mu T)^{-j}=G(\frac{(1+\mu
T)^p-1}{\mu^p})\in \Hom_{S_{\lb^p}-gr}({\gmu},{\gmSlp})$. This
implies the thesis. 

\end{proof}

We observe that we have the following commutative diagram of exact
rows
\begin{equation*}
\begin{aligned}
\xymatrix@1{
0\ar[r]&\clE^{(\mu,\lb,\widetilde{F},j)}\ar[d]\ar[r]&\clE^{(\mu,\lb,\widetilde{F})}\ar_{\alpha^{(\mu,\lb,\widetilde{F})}}[d]\ar^{\psi^j_{\mu,\lb,\widetilde{F},\widetilde{G}}}[r]&\ar[r]\ar^{\alpha^{(\mu,\lb,\widetilde{G})}}[d]\clE^{(\mu^p,\lb^p,\widetilde{G})}&0\\
                     0\ar[r]&\clE_{j,S}\ar[r]&(\gmS)^2\ar^{\psi^j}[r]&(\gmS)^2
                     \ar[r]&0}
\end{aligned}
\end{equation*}
 and the
following commutative diagram of exact rows and columns
\begin{equation*}
\begin{aligned}
\xymatrix@1{ & 0\ar[d]& 0\ar[d]& 0\ar[d]\\
0\ar[r]&{G_{\lb,1}}\ar[r]\ar[d]&\clE^{(\mu,\lb,\widetilde{F},j)}\ar[d]\ar[r]&G_{\mu,1}\ar[r]\ar[d]&0\\
                     0\ar[r]&{\glb}\ar[r]\ar^{\psi_{\lb,1}}[d]&\clE^{(\mu,\lb,\widetilde{F})}\ar[r]\ar^{\psi^j_{\mu,\lb,\widetilde{F},\widetilde{G}}}[d]&\gmu
                     \ar[r]\ar^{\psi_{\mu,1}}[d]&0\\
0\ar[r]&{\glbp}\ar[r]\ar[d]&\clE^{(\mu^p,\lb^p,\widetilde{G})}\ar[r]\ar[d]&\gmup
                     \ar[r]\ar[d]&0\\
                     &0 &0 &0}
\end{aligned}
\end{equation*}


 \begin{ex}\label{ex:Z/p^2Z SS}
 Let us suppose that $R$ is a d.v.r of unequal characteristic which contains a primitive $p^2$-th root of unity $\zeta_{2}$.  We define
$$
\eta=\sum_{k=1}^{p-1}\frac{(-1)^{k-1}}{k}\lb_{(2)}^k.
$$
We remark that $v(\eta)=v(\lb_{(2)})$. We put
$$
\widetilde{F}(T)=\sum_{k=1}^{p-1}\frac{(\eta T)^k}{k!}
$$
We have  $\widetilde{F}(T)\equiv E_p(\eta T)\mod \lb_{(1)}$. It
has been shown in \cite[\S 5]{SS4} that, using our notation,
$$
\Z/p^2\Z\simeq \clE^{(\lb_{(1)},\lb_{(1)};\tilde{F}(T),1)}.
$$
A similar description of $\Z/p^2\Z$ was independently found by
Green and Matignon (\cite{GM1}). This  gives the explicit
Kummer-Artin-Schreier-Witt theory for cyclic covers of order
$p^2$.
\end{ex}

\begin{ex}\label{ex:Gmun}
Assume $\mu,\lb \in R\setminus \{0\}$ and  $j\not\equiv 0 \mod p$.
Then there exists a group scheme  of type
$\clE^{(\mu,\lb;\tilde{F},j)}$,
with $\tilde{F}(T)\equiv 1\mod \lb$, if and only if 
 $v(\mu)\ge pv(\lb)$. Indeed 
the condition  $\tilde{F}(T)^p\equiv (1+\mu T)^j\mod (\lb^p,
\frac{(1+\mu T)^p-1}{\mu^p})$  reads  as $1 \equiv (1+\mu T)^j\mod
(\lb^p,
\frac{(1+\mu T)^p-1}{\mu^p})$. This is the case if and only if $\mu \equiv 0 \mod \lb^p$. 

In the limit case $v(\mu)=pv(\lb)$  we have an isomorphism
$$\clE^{(\lb^p,\lb;1,1)}\too
G_{\lb,2}:=\Sp(R[T]/(\frac{(1+\lb T)^{p^2}-1}{\lb^{p^2}})\In \glb,
$$
given, on the level of Hopf algebras, by
$$
T\mTo T_2
$$
and the inverse by
$$
T_1\mTo \frac{(1+\lb T)^p-1}{\lb^{p}},  \qquad T_2\mTo T.
$$
We observe that, if $char(R)=0$, since $(p-1)v(\mu)\le v(p)$, then
$v(\mu)=pv(\lb)$ implies $p(p-1)v(\lb)\le v(p)$. The last one is
in fact the  condition which  ensures that the flat $R$-group
scheme $G_{\lb,2}$ is finite over $R$.
\end{ex}

\begin{defn}
Let $\mu \in R\setminus\{0\}$ and $\lb\in \pi R\setminus \{0\}$.
We define
\begin{align*}
\rad&:=\bigg\{(F(T),j)\in \Hom_{S_\lb-gr}({\gmx{1}},{\gmSl})\times
\Z/p\Z \text{
such that }\\
& F(T)^p(1+\mu T)^{-j}=1\in
\Hom_{S_{\lb^p}-gr}({\gmx{1}},{\mathbb{G}_{m,S_{\lb^p}}})\bigg\}/
\left<(1+\mu T,0) \right>.
\end{align*}
It is a subgroup of $(\Hom_{S_\lb-gr}({\gmx{1}},{\gmSl})\times
\Z/p\Z)/\left<(1+\mu T,0) \right>$. If $v(\lb)=0$ we set
$\rad:=\{1\}\times \Z/p\Z$.

\end{defn}
\begin{rem}\label{rem: rad=0}
If $v(\mu)=0$ and $v(\lb)>0$ then the  group is trivial by
\eqref{eq:Hom(mup,gm)su Slb}.
\end{rem}
\vspace{.5cm}

Let $\mu,\lb \in R\setminus\{0\}$. We define $$ \beta: \rad
\on{}{\too} \Ext^1_S(G_{\mu,1},\glx{1})$$ by
$$ (F(
T),j)\longmapsto [\clE^{(\mu,\lb;{F}(T),j)}].
$$
 It is easy to
see that it is well defined.
\begin{lem}
$\beta$ is a morphism of groups. In particular the set
$\{[\clE^{(\mu,\lb;F,j)}]; (F,j)\in \rad\}$ is a subgroup of
$\Ext^1_S(G_{\mu,1},\glx{1})$.
\end{lem}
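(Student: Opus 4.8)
The plan is to leverage the two exact sequences \eqref{eq:ker phi} and \eqref{eq:ker alpha} together with the fact that the maps $i_*$, $\delta$ and $\delta'$ occurring in them are already known to be morphisms of groups (a pushforward on $\Ext^1$ and two connecting homomorphisms). Recall that the group law on $\rad$ is $(F_1,j_1)\cdot(F_2,j_2)=(F_1F_2,j_1+j_2)$, the first coordinate being multiplied in the multiplicatively written group $\Hom_{S_\lb-gr}(\gmx{1},\gmSl)$. First I would compute $i_*\circ\beta$. The first of the two commutative diagrams following Lemma-Definition \ref{def:clE(mu,lb,F,i)} identifies the pushforward of $[\clE^{(\mu,\lb;F,j)}]$ along $G_{\lb,1}\hookrightarrow\glb$ with the pullback of $[\clE^{(\mu,\lb;\widetilde F)}]=\alpha(F)$ along $G_{\mu,1}\hookrightarrow\gmu$; by \ref{prop:ker alpha=clE} this pullback is exactly $\delta(F)$. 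Hence
\begin{equation*}
i_*\bigl(\beta(F,j)\bigr)=\delta(F),
\end{equation*}
independently of $j$. Since $\delta$ is a morphism of groups which kills $1+\mu T$ (the latter lies in $\ker\delta=\operatorname{im}r_\lb$ by exactness of \eqref{eq:ker alpha}), and the first projection $\rad\to\Hom_{S_\lb-gr}(\gmx{1},\gmSl)/\left<1+\mu T\right>$ is a morphism of groups, it follows that $i_*\circ\beta$ is one as well.

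Next I would control the difference. Put $D:=\beta\bigl((F_1,j_1)\cdot(F_2,j_2)\bigr)-\beta(F_1,j_1)-\beta(F_2,j_2)\in\Ext^1_S(\gmx{1},\glx{1})$. By the previous paragraph $i_*D=0$, so by exactness of \eqref{eq:ker phi} we have $D\in\operatorname{im}\delta'=\{[\clE^{(\mu,\lb;1,j)}];\,j\in\Z/p\Z\}$. If $v(\mu)<pv(\lb)$ then $\Hom_{S-gr}(\gmx{1},\glbp)=0$ by \eqref{eq:Hom(glbn,glmn)}, so $\delta'=0$, the map $i_*$ is injective, and $D=0$ already. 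Otherwise $\delta'$ is injective with source $\cong\Z/p\Z$ via $\sigma_j\leftrightarrow j$, so there is a unique $j_0$ with $D=\delta'(\sigma_{j_0})=\beta(1,j_0)$, and it remains to prove $j_0=0$.

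To pin down $j_0$ I would detect the second coordinate on the generic fibre. The second commutative diagram following Lemma-Definition \ref{def:clE(mu,lb,F,i)}, whose vertical arrows $\alpha^{(\lb)}$, $\alpha^{(\mu,\lb,\widetilde F)}$, $\alpha^{(\mu)}$ are all model maps, shows that the generic fibre of $\clE^{(\mu,\lb;F,j)}$ is isomorphic, as an extension of $\mu_{p,K}$ by $\mu_{p,K}$, to the generic fibre of $\clE_{j,S}$, whose class in the subgroup $\Z/p\Z\subseteq\Ext^1_K(\mu_{p,K},\mu_{p,K})$ singled out in \ref{eq:ext1(mup,mup)}(i) and \ref{rem: clE sono lo Z/p} equals $j$. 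Thus restriction to the generic fibre $\rho:\Ext^1_S(\gmx{1},\glx{1})\too\Ext^1_K(\mu_{p,K},\mu_{p,K})$ is a morphism of groups with $\rho(\beta(F,j))=j$; in particular $\rho$ is injective on $\operatorname{im}\delta'$. Applying $\rho$ to $D$ yields $0=\rho(D)=\rho(\beta(1,j_0))=j_0$, whence $D=0$ and $\beta$ is a morphism of groups. The last assertion of the lemma is then immediate, the image of a group homomorphism being a subgroup. The hard part is really the identity $i_*\beta(F,j)=\delta(F)$: it is what cleanly separates the two coordinates of $\rad$, forcing $i_*$ to see only $F$ and the generic fibre to see only $j$, and once this separation is secured the rest of the argument is purely formal.
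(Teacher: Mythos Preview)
Your argument is correct and follows essentially the same route as the paper: one shows $i_*\circ\beta(F,j)=\delta(F)$, so $i_*$ of the defect $D$ vanishes, whence $D\in\operatorname{im}\delta'$, and then restriction to the generic fibre (where $\beta(F,j)$ goes to $[\clE_{j,K}]$) forces $D=0$. The only minor differences are that the paper explicitly dispatches the trivial boundary cases $v(\lb)=0$ and $v(\mu)=0$, and that your case split on $v(\mu)<pv(\lb)$ is unnecessary since $\delta'$ is injective in all cases and one may work directly with $(\sigma_{j_0})^*\Lambda$ rather than rewriting it as $\beta(1,j_0)$.
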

\begin{proof} If $v(\mu)=0$ and $v(\lb)>0$ the statement is trivial. If $v(\lb)=0$ then $\beta$ is
the morphism $\Z/p\Z\too \Ext^1_S(G_{\mu,1},\mu_{p,S})$ of
\ref{eq:ext1(mup,mup)}(i),(ii) (see also \ref{rem: clE sono lo
Z/p}).

Now let us assume $\mu,\lb \in \piR$. Let $i:G_{\lb,1}\too \glb$.
We remark that $$ i_*(\beta(F,j))=i_*(
[\clE^{(\mu,\lb;F,j)}])=[\widetilde{\clE}^{(\mu,\lb;F)}]=\delta(F)$$
for any $(F,j)\in \rad$. Moreover  by construction
$$
[(\clE^{(\mu,\lb;\widetilde{F},j)})_K]=[(\cal{E}_{j})_K]\in
\Ext^1_S({\mu_p}_{,K},{\mu_p}_{,K}).
$$

 Let $(F_1,j_1),(F_2,j_2)\in \rad$. Then
\begin{equation}\label{eq:i_*=0}
i_*(\beta(F_1,j_1)+\beta(F_2,j_2)-\beta(F_1+F_2,j_1+j_2))=\delta(F_1)+\delta(F_2)-\delta(F_1+F_2)=0,
\end{equation}
since $i_*$ and $\delta$ are morphisms of groups. For an extension
$[\clE]\in \Ext^1_S(G_{\mu,1},G_{\lb,1})$ we  define
$[\clE]_K:=[\clE_K]\in \Ext^1_K(\mu_{p,K}, \mu_{p,K})$. Then
\begin{equation}\label{eq:restrizione su K e' morfismo}
(\beta(F_1,j_1)+\beta(F_2,j_2)-\beta(F_1+F_2,j_1+j_2))_K=[\clE_{j_1,K}]+[\clE_{j_2,K}]-[\clE_{j_1+j_2,K}]=0.
\end{equation}
The last equality follows from \ref{eq:ext1(mup,mup)}(i) and
\ref{rem: clE sono lo Z/p}. By \eqref{eq:i_*=0} we have
$$
\beta(F_1,j_1)+\beta(F_2,j_2)-\beta(F_1+F_2,j_1+j_2)\in\ker i_*.
$$
and therefore,  by \eqref{eq:ker phi} and \eqref{eq:def delta'},
$$
\beta(F_1,j_1)+\beta(F_2,j_2)-\beta(F_1+F_2,j_1+j_2)=(\sigma_j)^*\Lambda.
$$
for some $j\in \Z/p\Z$. By \eqref{eq:restrizione su K e' morfismo}
it follows that
$$
((\sigma_j)^*\Lambda)_K=[\clE_{j,K}]=0,
$$
therefore $j=0$. So $\beta$ is a morphism of groups.
The last assertion is clear.
\end{proof}
 We  now give a description of $\Ext^{1}_S(G_{\mu,1},\glx{1})$ with $\mu,\lb \in \pi
 R\setminus\{0\}$. We recall that, till the end of the section, we are assuming that
$(p-1)v(\mu),(p-1)v(\lb)\le v(p)$ if
 $char(R)=0$.
\begin{thm}\label{teo:ext1(glx,gmx)}
Let $\mu,\lb \in \piR$.
 The 
following sequence
\begin{equation*}
\begin{array}{ll}
0\too \rad \on{\beta}{\too}&
\Ext^1_S(G_{\mu,1},\glx{1})\on{\alpha^{(\lb)}_*\circ i_*}\too\\
&\too \ker \bigg(H^1(S,G_{\mu,1}^\vee)\too
H^1(S_\lb,G_{\mu,1}^\vee)\bigg)
\end{array}
\end{equation*}
is exact. In particular $\beta$ induces an isomorphism
$$\rad\simeq \{[\clE^{(\mu,\lb;F,j)}];
(F,j)\in \rad\}.$$
\end{thm}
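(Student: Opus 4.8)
The plan is to splice together the two long exact sequences \eqref{eq:ker phi} and \eqref{eq:ker alpha}, using the identification \eqref{eq:ext1=H^1 per glbn} that turns $\Ext^1_S(G_{\mu,1},\gmS)$ and $\Ext^1_{S_\lb}(G_{\mu,1},\gmSl)$ into $H^1(S,G_{\mu,1}^\vee)$ and $H^1(S_\lb,G_{\mu,1}^\vee)$. Under this identification the target $\ker(H^1(S,G_{\mu,1}^\vee)\too H^1(S_\lb,G_{\mu,1}^\vee))$ becomes $\ker(\Ext^1_S(G_{\mu,1},\gmS)\too\Ext^1_{S_\lb}(G_{\mu,1},\gmSl))$, and by exactness of \eqref{eq:ker alpha} this is exactly the image of $\alpha^{(\lb)}_*$; hence $\alpha^{(\lb)}_*\circ i_*$ really does land there and the sequence is well posed. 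For the inclusion $\im\beta\subseteq\ker(\alpha^{(\lb)}_*\circ i_*)$ I would invoke the previous lemma, which gives $i_*(\beta(F,j))=\delta(F)$, and then exactness of \eqref{eq:ker alpha} (the composite $\alpha^{(\lb)}_*\circ\delta$ is zero) to conclude $\alpha^{(\lb)}_*\circ i_*\circ\beta=0$.

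The heart of the argument is the reverse inclusion. Given $[\clE]$ with $\alpha^{(\lb)}_*(i_*[\clE])=0$, first I would use exactness of \eqref{eq:ker alpha} together with \ref{prop:ker alpha=clE} to write $i_*[\clE]\in\ker\alpha^{(\lb)}_*=\im\delta$, say $i_*[\clE]=\delta(F)$ for some $F\in\Hom_{S_\lb-gr}(G_{\mu,1},\gmSl)$. On the other hand, \eqref{eq:ker phi} gives $i_*[\clE]\in\im i_*=\ker(\psi_{\lb,1})_*$, so $(\psi_{\lb,1})_*(\delta(F))=0$. I would then establish the compatibility $(\psi_{\lb,1})_*\circ\delta=\delta_{\lb^p}\circ(F\mapsto F^p)$, where $\delta_{\lb^p}$ is the connecting map of the $\lb^p$-analogue of \eqref{eq:ker alpha}: this follows from the factorization $\delta(F)=i^*(\alpha(F))$ recorded before \ref{prop:ker alpha=clE}, the Sekiguchi--Suwa identity $(\psi_{\lb,1})_*\circ\alpha=\alpha\circ p$ of \S\ref{sec:Sekiguchi-Suwa theory}, and the fact that pullback along $i:G_{\mu,1}\hookrightarrow\gmu$ commutes with pushout along $\psi_{\lb,1}$. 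Thus $\delta_{\lb^p}(F^p)=0$, and by exactness of the $\lb^p$-version of \eqref{eq:ker alpha} together with \eqref{eq:ker alpha con r lambda} this forces $F^p\in\langle 1+\mu T\rangle$ in $\Hom_{S_{\lb^p}-gr}(G_{\mu,1},\gmSlp)$, i.e. $F^p=(1+\mu T)^j$ for some $j$. In other words $(F,j)\in\rad$.

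Now $i_*(\beta(F,j))=\delta(F)=i_*[\clE]$, so $[\clE]-\beta(F,j)\in\ker i_*=\im\delta'$ by \eqref{eq:ker phi}. By \eqref{eq:def delta'}--\eqref{eq:E(mu,lb,1,j)} the image of $\delta'$ is $\{\delta'(\sigma_k)\}=\{[\clE^{(\mu,\lb;1,k)}]\}$; I would match these with $\beta$ by noting that $\beta(1,k)=[\clE^{(\mu,\lb;1,k)}]$ whenever $(1,k)\in\rad$. When $v(\mu)\ge pv(\lb)$ one has $1+\mu T=1$ over $S_{\lb^p}$, so every $(1,k)$ lies in $\rad$ and $\im\delta'=\{\beta(1,k)\}$; when $v(\mu)<pv(\lb)$ the group $\Hom_{S-gr}(G_{\mu,1},\glbp)$ vanishes by \eqref{eq:Hom(glbn,glmn)}, so $\im\delta'=0$. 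Either way $[\clE]-\beta(F,j)=\beta(1,k)$ for a suitable $k$, and since $\beta$ is a homomorphism and $(F,j)+(1,k)=(F,j+k)\in\rad$, we get $[\clE]=\beta(F,j+k)\in\im\beta$. This proves exactness at $\Ext^1_S(G_{\mu,1},\glx{1})$.

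Finally, for exactness at $\rad$ (injectivity of $\beta$), suppose $\beta(F,j)=0$. Applying $i_*$ gives $\delta(F)=0$, so by \eqref{eq:ker alpha} and \eqref{eq:ker alpha con r lambda} we have $F\in\langle 1+\mu T\rangle$; modulo $\langle(1+\mu T,0)\rangle$ this lets me replace $(F,j)$ by $(1,j)$. Then $\beta(1,j)=\delta'(\sigma_j)=0$ with $\delta'$ injective forces $\sigma_j=0$, whence $j\equiv 0$ if $v(\mu)\ge pv(\lb)$ (using $\Hom_{S-gr}(G_{\mu,1},\glbp)\simeq\Z/p\Z$); and if $v(\mu)<pv(\lb)$ the mere membership $(1,j)\in\rad$ forces $j\equiv 0$, since $1+\mu T$ has order $p$ over $S_{\lb^p}$. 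In either case $(F,j)=0$ in $\rad$, so $\beta$ is injective and therefore restricts to an isomorphism onto its image $\{[\clE^{(\mu,\lb;F,j)}];(F,j)\in\rad\}$, giving the final assertion. The step I expect to be the main obstacle is the compatibility $(\psi_{\lb,1})_*\circ\delta=\delta_{\lb^p}\circ(F\mapsto F^p)$ and, inside it, the verification that $F\mapsto F^p$ is well defined from $\Hom$ over $S_\lb$ to $\Hom$ over $S_{\lb^p}$ --- this is where the hypothesis $(p-1)v(\lb)\le v(p)$ enters, ensuring the cross terms $\binom{p}{i}\lb^i$ vanish modulo $\lb^p$.
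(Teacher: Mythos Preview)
Your proposal is correct and follows essentially the same route as the paper. Both arguments splice the two long exact sequences \eqref{eq:ker phi} and \eqref{eq:ker alpha}, identify $\im(\alpha^{(\lb)}_*)$ with $\ker\big(H^1(S,G_{\mu,1}^\vee)\to H^1(S_\lb,G_{\mu,1}^\vee)\big)$ via \eqref{eq:ext1=H^1 per glbn}, and use the compatibility $(\psi_{\lb,1})_*\circ\delta=\delta_{\lb^p}\circ(F\mapsto F^p)$; the paper packages this last step as the map $\widetilde{\psi_{\lb,1}}_*$ in the commutative diagram \eqref{eq:succ esatta per ext1} and then applies the snake lemma, whereas you carry out the resulting diagram chase by hand. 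The only organizational difference is that the paper first sets up the explicit isomorphism $\ker(\widetilde{\psi_{\lb,1}}_*)\times\Hom_{S-gr}(G_{\mu,1},\glbp)\simeq\rad$ (your case split $v(\mu)\ge pv(\lb)$ versus $v(\mu)<pv(\lb)$ is exactly this) and writes $\beta=\rho+\delta'$, then proves injectivity and computes the image; you instead treat exactness at the middle and injectivity directly, handling the two cases inline. Your identification of the ``main obstacle'' is accurate: the well-definedness of $F\mapsto F^p$ from $\Hom$ over $S_\lb$ to $\Hom$ over $S_{\lb^p}$ is precisely where $(p-1)v(\lb)\le v(p)$ enters, and the paper uses this (implicitly via $\psi_{\lb,1}{}_*\circ\alpha=\alpha\circ p$ from \S\ref{sec:Sekiguchi-Suwa theory}) in the same way.
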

\begin{proof}
Using \eqref{eq:ker alpha} and  \ref{prop:ker alpha=clE}, we consider the following commutative diagram 
\begin{equation}\label{eq:succ esatta per ext1} \xymatrix{0\ar[r]
&\{\widetilde{\clE}^{(\mu,\lb;F)}\}\ar[r]\ar^{\widetilde{\psi_{{\lb,1}_*}}}[d]&
\Ext^1_S(\gmx{1},\glb)\ar^{\alpha_*^{(\lb)}}[r]\ar^{\psi_{{\lb,1}_*}}[d]&\Ext^1_S(\gmx{1},\gmS)\ar^{p_*}[d]\ar[r]&  \Ext^1_{S_{\lb}}({G_{\mu,1}},{\gmSl})\ar^{p_*}[d]\\
 0\ar[r]  &\{\widetilde{\clE}^{(\mu,\lb^p;G)}\}\ar[r]&
\Ext^1_S(\gmx{1},\g^{(\lb^p)})\ar^{{\alpha_*^{(\lb^p)}}}[r]
&\Ext^1_S(\gmx{1},\gmS)\ar[r]      &
\Ext^1_{S_{\lb}}({G_{\mu,1}},{\gmSl})}
\end{equation}
 The map
$\widetilde{\psi_{{\lb,1}_*}}$, 
induced by ${\psi_{\lb,1}}_*:\Ext^1_S(\gmx{1},\glb)\too
\Ext^1_S(\gmx{1},\glbp)$, is given by
$$[\widetilde{\clE}^{(\mu,\lb;\widetilde{F})}]\longmapsto
[\widetilde{\clE}^{(\mu,\lb^p;\widetilde{F}^p)}].$$ 
The functor $\Ext^1$ is bi-additive and $\gmx{1}$ has order $p$,
then \mbox{$p_*:\Ext^1_S(\gmx{1},\gmS)\to
\Ext^1_S(\gmx{1},\gmS)$} 
is the zero map.
Moreover, by \eqref{eq:spectral sequence per ext} and the
isomorphism \eqref{eq:ext1=H^1 per glbn}, we have the following
situation
$$
\xymatrix@1{0\ar[r]&H^1(S,G_{\mu,1}^\vee)\ar[r] \ar[d]& \Ext^1_S(\gmx{1},\gm) \ar[d]^{}\ar[r]&0\\
0\ar[r]&H^1(S_\lb,G_{\mu,1}^\vee)\ar[r]&\Ext^1_{S_{\lb}}({\gmx{1}},{\gmSl})}
$$
which
 implies that
$ \im(\alpha^{(\lb)}_*)\simeq \ker
(H^1(S,G_{\mu,1}^\vee)\too H^1(S_\lb,G_{\mu,1}^\vee))$. 
%
So applying the 
snake lemma to  \eqref{eq:succ esatta per ext1} we obtain 
\begin{equation}\label{eq:snake}
\begin{array}{l}
0\too \ker(\widetilde{\psi_{{\lb,1}_*}})
\on{\widetilde{\delta}}{\too}
\ker(\psi_{{\lb,1}_*})\on{\alpha^{(\lb)}_*} \too \ker
\bigg(H^1(S,G_{\mu,1}^\vee)\too
H^1(S_\lb,G_{\mu,1}^\vee)\bigg). 
\end{array}
\end{equation}
We now divide the proof in some steps.

\begin{tabular}{|c|}
  \hline
  Connection between $\ker(\widetilde{{\psi_{\lb,1}}_*})$ and $\rad$.\\
  \hline
\end{tabular}
We are going to give the connection in the form of the isomorphism
\eqref{eq:isomorfismo} below. We recall that, by \eqref{eq:ker
phi}, $i:\glx{1}\too \glb$ induces an isomorphism
\begin{equation}\label{eq:ker phi bis}
i_*:\Ext^1_S(G_{\mu,1},\glx{1})/\delta'(\Hom_{S-gr}
(G_{\mu,1},\glbp) )\too \ker({\psi_{\lb,1}}_*);
\end{equation}
for the definition of $\delta'$ see \eqref{eq:def delta'}.
By  \ref{prop:ker alpha=clE} we have an isomorphism 
\begin{align*}
    \delta:\Hom_{S_\lb-gr}({\gmx{1}},{\gmSl})/r_{\lb}(\Hom_{S-gr}(\gmx{1},\gmS))\too \{[\widetilde{\clE}^{(\mu,\lb;F)}];
F\in \Hom_{S_\lb-gr}({\gmx{1}},{\gmSl})\}.
\end{align*}
Through this identification we can identify 
$\ker(\widetilde{{\psi_{\lb,1}}_*})$ with

$$\left\{F(T)\in
\Hom_{S_{\lambda}-{{gr}}}(G_{\mu,1},\mathbb{G}_{m,S_{\lambda}})\;;
\begin{array}{c}
\text{there exists $ i\in r_{\lb^p}(\Hom_{S-gr}(\gmx{1},\gmS))$ such that}\\[1mm]
F(T)^p(1+\mu T)^{-i}=1\in
\Hom_{S_{\lambda^p}-{\mathrm{gr}}}(G_{\mu,1},\
G_{m,S_{\lambda^p}})
\end{array}\right\}\Bigl/\langle 1+\mu T \rangle$$
and 
\begin{equation*}\label{eq:inj}
\widetilde{\delta}:\ker(\widetilde{{\psi_{\lb,1}}_*})\hookrightarrow
\Ext^1_S(G_{\mu,1},\glx{1})/\delta'(\Hom_{S-gr} (G_{\mu,1},\glbp)
)\In \Ext^1_S(\gmx{1},\g^{(\lb)})
\end{equation*}
 is defined by 
$\widetilde{\delta}(F)=\delta(F)=[\widetilde{\clE}^{(\mu,\lb;{F})}]$. 
We now define  a morphism of groups
$$\iota:\ker(\widetilde{{\psi_{\lb,1}}_*})\too
r_{\lb^p}(\Hom_{S-gr}(\gmx{1},\gmS))$$ as follows: for  any
$F(T)\in \ker(\widetilde{{\psi_{\lb,1}}_*})$,  $\iota(F)=i_F$  is
the unique $i\in r_{\lb^p}(\Hom_{S-gr}(\gmx{1},\gmS))$ such that
$F(T)^p(1+\mu T)^{-i}=1\in
\Hom_{S_{\lb^p}-gr}({\gmx{1}},{\mathbb{G}_{m,S_{{\lb^p}}}})$. The
morphism of groups
\begin{align}\label{eq:isomorfismo}
\ker(\widetilde{\psi_{{\lb,1}_*}})\times \Hom_{S-gr}
(G_{\mu,1},\glbp)&\too \rad\\
(F,j)&\longmapsto (F,i_F+j)\nonumber
\end{align}
is  an isomorphism. We prove only the surjectivity  since the
injectivity is clear. Now, if $\lb^p\nmid \mu$ then
$\Hom_{S-gr}(\gmx{1},\glbp)=0$ and
$r_{\lb^p}(\Hom_{S-gr}(\gmx{1},\gmS))=\Z/p\Z$. So, if $(F,j)\in
\rad$, then $j\in r_{\lb^p}(\Hom_{S-gr}(\gmx{1},\gmS))$. So
$i_F=j$. Hence $(F,0)\mapsto (F,i_F)=(F,j)$. While if $\lb^p\mid
\mu$ then $\Hom_{S-gr}(\gmx{1},\glbp)=\Z/p\Z$ and
$r_{\lb^p}(\Hom_{S-gr}(\gmx{1},\gmS))=0$. Hence
$$\ker(\widetilde{{\psi_{\lb,1}}}_*)=\bigg\{F(T)\in
\Hom_{S_\lb-gr}({\gmx{1}},{\gmSl});
 F(T)^p=1\in
\Hom_{S_{\lb^p}-gr}({\gmx{1}},{\mathbb{G}_{m,S_{{\lb^p}}}})\bigg\}.$$
Let us now take $(F,j)\in \rad.$ This means that
$$F(T)^p=(1+\mu T)^j=1\in \Hom_{S_{\lb^p}-gr}({\gmx{1}},{\mathbb{G}_{m,S_{{\lb^p}}}}).$$
Therefore $F(T)\in \ker(\widetilde{\psi_{{\lb,1}_*}})$ and
$i_F=0$. So
$$
(F,j)\longmapsto (F,i_F+j)=(F,j).
$$

\begin{tabular}{|c|}
  \hline
  Interpretation of $\beta$.\\
  \hline
\end{tabular}
We now define the morphism of groups
\begin{align*}
\varrho:\ker(\widetilde{{\psi_{\lb,1}}_*})&\too
\Ext^1_S(G_{\mu,1},\glx{1})\\
        F&\longmapsto\beta(F,i_F)= [\clE^{(\mu,\lb;F,i_F)}].
\end{align*}
We recall the definition of $\delta'$ given in \eqref{eq:def
delta'}: $$\delta':\Hom_{S-gr} (G_{\mu,1},\glbp)\too
\Ext^1_S(G_{\mu,1},\glx{1})$$ is defined by
$\delta'(\sigma_i)=\sigma_i^*(\Lambda)$.
 Then, under the
isomorphism \eqref{eq:isomorfismo}, we have
$$
\beta=\rho+\delta':\ker(\widetilde{\psi_{{\lb,1}_*}})\times
\Hom_{S-gr} (G_{\mu,1},\glbp) {\too} \Ext^1_S(G_{\mu,1},\glx{1}).
$$
%

%
\begin{tabular}{|c|}
  \hline
   Injectivity of $\beta$.\\
  \hline
\end{tabular}
First of all we observe that $\widetilde{\delta}$ factors through
$\rho$, i.e.
\begin{equation*}
\widetilde{\delta}=i_*\circ
\rho:\ker(\widetilde{{\psi_{\lb,1}}_*})\on{\rho}{\too}
\Ext^1_S(G_{\mu,1},\glx{1})\on{i_*}{\too}
\ker({\psi_{{\lb,1}_*}}).
\end{equation*}
Indeed 
$$
i_*\circ
\rho(F)=i_*([\clE^{(\mu,\lb;F,i_F)})=[\widetilde{\clE}^{(\mu,\lb,F)}]=\widetilde{\delta}(F).
$$ 
In particular, since $\widetilde{\delta}$ is injective, $\rho$ is
injective, too.

We now prove that $\beta=\rho+\delta'$ is injective, too. By
\eqref{eq:ker
phi bis}, 
$$i_*\circ \delta'=0.$$ Now, if
$(\rho+\delta')(F,\sigma_j)=0$, then $\rho(F)=-\delta'(\sigma_j)$.
So
$$
\widetilde{\delta}(F)=i_*(\rho(F))=i_*(-\delta'(\sigma_j))=i_*(\delta'(\sigma_{-j}))=0.
$$
But $\widetilde{\delta}$ is injective, so $F=1$. Hence
$\delta'(\sigma_j)=0$. But by \eqref{eq:ker phi}, also $\delta'$
is injective. Then $\sigma_j=0$.

\begin{tabular}{|c|}
  \hline
 Calculation of  $\im \beta$.\\
  \hline
\end{tabular}
 We finally prove  $\im(\rho+\delta')=  \ker({\alpha^{(\lb)}_*}\circ
i_*)$. Since $\widetilde{\delta}=i_*\circ \rho$,
$\alpha^{(\lb)}_*\circ\widetilde{\delta}=0$ and
$i_*\circ\delta'=0$ then $$ \alpha^{(\lb)}_*\circ
i_*\circ(\rho+\delta')=0.$$ Then  $\im(\rho+\delta')\In
\ker({\alpha^{(\lb)}_*}\circ i_*)$. On the other hand, if
$[\clE]\in \Ext^1_S(G_{\mu,1},\glx{1})$ is such that
${\alpha^{(\lb)}_*}\circ i_*([\clE])=0$, then, by
\eqref{eq:snake}, there exists $F\in
\ker(\widetilde{\psi_{{\lb,1}_*}})$ such that
$i_*([\clE])=\widetilde{\delta}(F)=i_*(\rho(F))$. Hence, by the
isomorphism \eqref{eq:ker phi bis}, $[\clE]-\rho(F)\in
\im(\delta')$. Therefore $\im(\rho+\delta')=
\ker({\alpha^{(\lb)}_*}\circ i^*)$. Moreover since
$i_*:\Ext^1_S(G_{\mu,1},\glx{1})\too \ker({\psi_{\lb,1}}_*)$ is
surjective then $\im(\alpha^{(\lb)}_*)=\im(\alpha^{(\lb)}_*\circ
i_*)$. Thus we have
 proved, using also \eqref{eq:snake}, that the following sequence
\begin{align*}
0\too \ker(\widetilde{\psi_{{\lb,1}_*}})\times \Hom_{S-gr}
(G_{\mu,1},\glbp) & \on{\rho+\delta'}{\too}
\Ext^1_S(G_{\mu,1},\glx{1})\on{\alpha^{(\lb)}_*\circ i_*}\too\\
\too &\ker \bigg(H^1(S,G_{\mu,1}^\vee)\too
H^1(S_\lb,G_{\mu,1}^\vee)\bigg)
\end{align*}
is exact.
Finally, by definitions, it follows that
$$
\beta(\rad)=\{[\clE^{(\mu,\lb;F,j)}]; (F,j)\in \rad\}.
$$
\end{proof}
\begin{cor} \label{cor:modelli di Z/p^2Z} 
Let $\mu,\lb \in R\setminus\{0\}$. The inverse image of
$\{[\clE_{j,K}]; j\in \Z/p\Z\}$ by the canonical map
$\Ext^1_S(\gmx{1},\glx{1})\too \Ext^1_K(\mu_{p,K},\mu_{p,K})$ is
given by $\{[\clE^{(\mu,\lb;F,j)}]; (F,j)\in \rad\}$. Moreover any
extension $[\clE]\in \ext^1(\gmx{1},\glx{1})$ is of type
$[\clE^{(\mu,\lb;F,j)}]$, up to an extension of $R$.
\end{cor}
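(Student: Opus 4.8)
The plan is to read the corollary off Theorem~\ref{teo:ext1(glx,gmx)}, which already identifies $\{[\clE^{(\mu,\lb;F,j)}];(F,j)\in\rad\}$ with $\im(\beta)=\ker(\alpha^{(\lb)}_*\circ i_*)$, by comparing the map $\alpha^{(\lb)}_*\circ i_*$ with its restriction to the generic fibre. I would first treat the main case $\mu,\lb\in\piR$, and then record that the boundary cases $v(\mu)=0$ or $v(\lb)=0$ follow by the identical argument from Proposition~\ref{eq:ext1(mup,mup)} together with Remark~\ref{rem: clE sono lo Z/p}.

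\textbf{The key commutative square.} The composite $G_{\lb,1}\stackrel{i}{\too}\glb\stackrel{\alpha^{(\lb)}}{\too}\gmS$ restricts on the generic fibre to the inclusion $\mu_{p,K}\hookrightarrow\gmK$, while $(\gmx{1})_K\simeq\mu_{p,K}$. Hence, using the isomorphism \eqref{eq:ext1=H^1 per glbn} and Proposition~\ref{eq:ext1(mup,mup)}(i) over $K$ (legitimate by Remark~\ref{rem:Ext(mup,mup) ogni base}), there is a commutative square
\[
\xymatrix{
\Ext^1_S(\gmx{1},\glx{1})\ar[d]\ar[rr]^{\alpha^{(\lb)}_*\circ\, i_*}&&H^1(S,G_{\mu,1}^\vee)\ar[d]\\
\Ext^1_K(\mu_{p,K},\mu_{p,K})\ar[rr]^{(i_{\mu_p})_*}&&H^1(K,\Z/p\Z)
}
\]
whose vertical arrows are restriction to $K$. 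Here the lower map $(i_{\mu_p})_*$ is the pushforward along $\mu_{p,K}\hookrightarrow\gmK$, and its kernel is exactly $\{[\clE_{j,K}];j\in\Z/p\Z\}$ (the image of $\Hom_K(\mu_{p,K},\gmK)$), by the generic fibre of Proposition~\ref{eq:ext1(mup,mup)}(i) and Remark~\ref{rem: clE sono lo Z/p}. The right vertical arrow is injective by Proposition~\ref{prop:torsori schemi normali} applied to the normal integral scheme $S$ and the finite flat group scheme $G_{\mu,1}^\vee$.

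\textbf{The inverse image.} By Theorem~\ref{teo:ext1(glx,gmx)}, an element $[\clE]$ lies in $\{[\clE^{(\mu,\lb;F,j)}]\}=\im(\beta)$ if and only if $\alpha^{(\lb)}_*\circ i_*([\clE])=0$. By injectivity of the right vertical arrow this vanishes if and only if its image in $H^1(K,\Z/p\Z)$ vanishes, which by commutativity equals $(i_{\mu_p})_*([\clE_K])$; and this is zero if and only if $[\clE_K]\in\{[\clE_{j,K}]\}$. Chaining the equivalences gives the first assertion. For the second, given an arbitrary $[\clE]$ its generic fibre $[\clE_K]$ has image $c:=(i_{\mu_p})_*([\clE_K])\in H^1(K,\Z/p\Z)$, a $\Z/p\Z$-torsor split by a finite separable extension $K'/K$. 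Choosing a d.v.r.\ $R'$ with fraction field $K'$ dominating $R$, the hypotheses of Theorem~\ref{teo:ext1(glx,gmx)} are preserved (the valuations of $\mu$, $\lb$ and $p$ all scale by the ramification index), and by functoriality $c$ restricts to $c|_{K'}=0$; so the first assertion applied over $R'$ shows that $[\clE_{R'}]$ is of type $\clE^{(\mu,\lb;F,j)}$.

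\textbf{Main obstacle.} The substantive point is the verification underlying the square: that the generic fibre of $\alpha^{(\lb)}_*\circ i_*$ really is the Kummer pushforward $(i_{\mu_p})_*$ whose kernel is spanned by the classes $[\clE_{j,K}]$. Once this is matched with the codomain of Theorem~\ref{teo:ext1(glx,gmx)}, the injectivity over the normal base $S$ (Proposition~\ref{prop:torsori schemi normali}) is exactly what converts the \emph{geometric} condition ``$[\clE_K]$ equals some $[\clE_{j,K}]$'' into the \emph{cohomological} vanishing $\alpha^{(\lb)}_*\circ i_*([\clE])=0$ controlled by the theorem; keeping track of the boundary cases $v(\mu)=0$ and $v(\lb)=0$ via Proposition~\ref{eq:ext1(mup,mup)} is the only bookkeeping that needs separate attention.
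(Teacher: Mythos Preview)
Your proof is correct and follows essentially the same approach as the paper: the same commutative square comparing $\alpha^{(\lb)}_*\circ i_*$ with its generic fibre, the same appeal to Proposition~\ref{prop:torsori schemi normali} for injectivity of $H^1(S,G_{\mu,1}^\vee)\to H^1(K,\Z/p\Z)$, and the same diagram chase for the first assertion. For the second assertion the paper trivializes the $G_{\mu,1}^\vee$-torsor over $S$ by passing to the integral closure of $R$ in the function field of $S'_K$ and localizing, whereas you trivialize the underlying $\Z/p\Z$-torsor over $K$ and then invoke the first assertion over $R'$; since $(G_{\mu,1}^\vee)_K\simeq(\Z/p\Z)_K$ these amount to the same construction, with your phrasing slightly more streamlined.
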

\begin{rem}\label{rem:essenzialmente tutti i gruppi di
ordine p2}If $char(R)=0$, any $R$-group scheme  of order $p^2$ is
of type $ \clE^{(\mu,\lb,F,j)}$, possibly after an extension of
$R$. Indeed, up to an extension of $R$, the generic fiber of any
$R$-group scheme is isomorphic to $\clE_{j,K}$. Then the result
follows from \ref{lem:modelli di Z/p^2 Z sono estensioni} and the
previous corollary.
\end{rem}
\begin{proof}

Let us first suppose $\mu,\lb \in \piR$. Let us consider the
following commutative diagram with exact rows
$$
\xymatrix{&\rad\ar[r]&\Ext^1_S(G_{\mu,1},\glx{1})\ar^{\alpha^{(\lb)}_*\circ
i_*}[r]\ar[d]& H^1(S,G_{\mu,1}^\vee)\ar[d]\\
0\ar[r]&\Z/p\Z\simeq [\clE_{i,K}]\ar[r]&
\ext^1_{K}(\mu_{p,K},\mu_{p,K})\ar^{\alpha^{(\lb)}_*\circ i_*}[r]&
 H^1(\Sp(K),\Z/p\Z)\ar[r]& 0}
$$
where the vertical maps are the restrictions to the generic fiber.
The first statement follows, remarking that the second vertical
map is injective by \ref{prop:torsori schemi normali}.

We now prove the second statement. Let $[\clE]\in
\ext^1{(\gmx{1},\glx{1})}$. Suppose that
$\alpha^{(\lb)}_*(i_*[\clE])=[S']$, with $S'\too S$ a
$G_{\mu,1}^\vee$-torsor.  If $S'\too S$ is trivial by the first
exact row of the above diagram we are done. Otherwise $S'_K$ is
integral
and we consider the integral closure  $S''=\Sp(R'')$ of $S$ in
$S'_K$. Since $S''_K\simeq S'_K$, then  $S'_K\times_K S''_K$ is a
trivial $(G_{\mu,1}^\vee)_K$-torsor over $S''_K$. By
\ref{prop:torsori schemi normali} we have that  $S'\times_S S''$
is a trivial $G_{\mu,1}^\vee$-torsor  over
$S''$. 
Let  $\widetilde{R}$ be the  localization of $R''$ at a closed point over $(\pi)$. 
 Then
$\widetilde{R}$ is a noetherian local integrally closed ring of
dimension $1$, i.e. a d.v.r. (see \cite[9.2]{mac}).
Let us call $\widetilde{S}=\Sp(\widetilde{R})$. So, if we make the
base change $f:\widetilde{S}\to S$, then
$\alpha^{(\lb)}_*(i_*([\clE\times_S {\widetilde{S}}]))=0$. Again
by the above diagram, this implies that
 $[\clE\times_S \widetilde{S}]$ is of type
$ [\clE^{(\mu,\lb,F,j)}]. $

The cases $v(\mu)=0$ or $v(\lb)=0$ are similar, even  simpler,
using \ref{eq:ext1(mup,mup)} and \ref{rem: clE sono lo Z/p}.

\end{proof}

\subsection{$\Ext^1_S(G_{\mu,1},\glx{1})$ and the Sekiguchi-Suwa
theory}

 We now give a  description of  the group $\{[\clE^{(\mu,\lb;F,j)}]; (F,j)\in \rad\}$
through the Sekiguchi-Suwa theory. 
We begin with an object which will play a key role in  such a
description.
\begin{lemdefn}
Let $\mu,\lb \in R\setminus\{0\}$. We  define
\begin{align*}
\Phi_{\mu,\lb}:=\bigg\{(a,j)\in (R/\lb
R)^{{\fr}-\mu^{p-1}}\times\Z/p\Z& \text{ such that }
pa-j\mu=\frac{p}{\mu^{p-1}}a^p\in R/\lb^p
R\bigg\}\bigg/\left<(\mu,0)\right>
\end{align*}
if $char(R)=0$, and
$$
\Phi_{\mu,\lb}:=\bigg\{(a,j)\in (R/\lb
R)^{{\fr}-\mu^{p-1}}\times\Z/p\Z \text{ such that } j\mu\equiv 0
\mod \lb^p \bigg\}\bigg/\left<(\mu,0)\right>
$$
if $char(R)=p$. It is a subgroup of
$((R/\lb R)^{{\fr}-\mu^{p-1}}\times\Z/p\Z)/\left<(\mu,0)\right>$. 
\end{lemdefn}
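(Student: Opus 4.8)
The statement to prove is that $\Phi_{\mu,\lb}$ is a subgroup of the quotient $\bigl((R/\lb R)^{\fr-\mu^{p-1}}\times\Z/p\Z\bigr)/\langle(\mu,0)\rangle$. The plan is to exhibit the defining set as the kernel of a group homomorphism and to check that $(\mu,0)$ lies in it. First I would recall that $(R/\lb R)^{\fr-\mu^{p-1}}$ is an abelian group under ordinary addition, via the isomorphism $(\xi^0_{R/\lb R})_p\colon (R/\lb R)^{\fr-\mu^{p-1}}\simeq\Hom_{S_\lb-gr}(\gmx{1},\gmSl)$, $a\mapsto E_p(a,\mu;T)$, computed above; thus $(R/\lb R)^{\fr-\mu^{p-1}}\times\Z/p\Z$ is an abelian group. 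Since $\mu^p=\mu^{p-1}\mu$ we have $\mu\in(R/\lb R)^{\fr-\mu^{p-1}}$, so $(\mu,0)$ generates a subgroup and the quotient is well defined. It therefore suffices to show that the set $\Psi$ cut out by the displayed relation is a subgroup of the product containing $(\mu,0)$.

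When $char(R)=p$ the relation is $j\mu\equiv0\pmod{\lb^p}$, which depends only on $j$ and is $\Z$-linear; hence $\Psi$ is the kernel of the homomorphism $(a,j)\mapsto j\mu\in R/\lb^p R$ and visibly contains $(\mu,0)$. So assume $char(R)=0$, so that the standing hypotheses $(p-1)v(\mu),(p-1)v(\lb)\le v(p)$ hold and $p/\mu^{p-1}\in R$. Consider $\theta(a,j)=pa-j\mu-\tfrac{p}{\mu^{p-1}}a^p\in R/\lb^p R$, where $a$ is read through a lift $\tilde a\in R$. One first checks $\theta$ is independent of the lift: replacing $\tilde a$ by $\tilde a+\lb c$ alters the value by $p\lb c-\tfrac{p}{\mu^{p-1}}\bigl((\tilde a+\lb c)^p-\tilde a^p\bigr)$, and every term has valuation $\ge v(p)+v(\lb)\ge p\,v(\lb)$ by $(p-1)v(\lb)\le v(p)$, hence vanishes in $R/\lb^p R$. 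By construction $\Psi=\ker\theta$ and $\theta(\mu,0)=p\mu-\tfrac{p}{\mu^{p-1}}\mu^p=0$, so once $\theta$ is shown to be a homomorphism we are done.

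The crux is that $\theta$ is additive, equivalently that for all $a,b\in(R/\lb R)^{\fr-\mu^{p-1}}$ the discrepancy $\tfrac{p}{\mu^{p-1}}\bigl((a+b)^p-a^p-b^p\bigr)$ vanishes mod $\lb^p$. Writing $(a+b)^p-a^p-b^p=\sum_{k=1}^{p-1}\binom{p}{k}\tilde a^k\tilde b^{p-k}$ and using $p\mid\binom{p}{k}$ factors out $p^2/\mu^{p-1}$, whose valuation is $\ge v(p)$ since $(p-1)v(\mu)\le v(p)$. It then remains to bound $\min_{1\le k\le p-1}\bigl(kv(\tilde a)+(p-k)v(\tilde b)\bigr)$, which for $v(\tilde a)\le v(\tilde b)$ equals $(p-1)v(\tilde a)+v(\tilde b)$. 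This is where membership in $(R/\lb R)^{\fr-\mu^{p-1}}$ is used: the relation $\tilde a^p\equiv\mu^{p-1}\tilde a\pmod\lb$ forces $v(\tilde a)\ge\min\{v(\mu),v(\lb)/p\}$, and likewise for $\tilde b$. A short case split on whether this minimum is $v(\mu)$ or $v(\lb)/p$ gives total valuation $\ge p\,v(\lb)$: in the first case the prefactor together with $p\,v(\mu)$ yields $\ge 2v(p)\ge 2(p-1)v(\lb)\ge p\,v(\lb)$, and in the second the prefactor $2v(p)-(p-1)v(\mu)\ge v(p)\ge(p-1)v(\lb)$ together with the remaining $v(\lb)$ yields $\ge p\,v(\lb)$. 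I expect this valuation estimate to be the main obstacle: neither the divisibility by $p$ nor the mod-$\lb$ vanishing of $(a+b)^p-a^p-b^p$ suffices alone, and one genuinely needs the lower bounds on $v(\tilde a),v(\tilde b)$ coming from the Frobenius-type condition together with both standing hypotheses.

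Granting this, $\theta$ is a homomorphism, $\Psi=\ker\theta$ is a subgroup containing $(\mu,0)$, and $\Phi_{\mu,\lb}=\Psi/\langle(\mu,0)\rangle$ is a subgroup of the quotient, as claimed. As an alternative to the direct estimate, one could transport the group structure from $\rad$: under $a\mapsto E_p(a,\mu;T)$ the relation defining $\Psi$ corresponds to $F(T)^p(1+\mu T)^{-j}=1$, which is precisely the translation carried out in Proposition~\ref{prop:rad_p}, and $\rad$ is a subgroup by construction. Since that correspondence is the content of the next proposition, however, I would keep the present argument self-contained via the kernel description.
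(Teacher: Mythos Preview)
Your argument is correct and supplies exactly the details that the paper omits: the paper's own proof reads in its entirety ``One should prove that it is a subgroup\dots. In particular the quotient is well defined. The proof is very easy.'' Your route---realising the defining set as $\ker\theta$ for the map $\theta(a,j)=pa-j\mu-\tfrac{p}{\mu^{p-1}}a^p\in R/\lb^p R$ and checking $(\mu,0)\in\ker\theta$---is the natural direct verification, and your valuation estimates are sound.

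One small imprecision: in the well-definedness check you assert that every term in $p\lb c-\tfrac{p}{\mu^{p-1}}\bigl((\tilde a+\lb c)^p-\tilde a^p\bigr)$ has valuation $\ge v(p)+v(\lb)$. For the top binomial term $\tfrac{p}{\mu^{p-1}}(\lb c)^p$ this need not hold when $v(\mu)>v(\lb)$; however that term already has valuation $\ge v(p)-(p-1)v(\mu)+pv(\lb)\ge pv(\lb)$, so the conclusion you draw is unaffected. The rest of the additivity argument, including the bound $v(\tilde a)\ge\min\{v(\mu),v(\lb)/p\}$ and the subsequent case split, goes through as written.
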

\begin{proof} One should prove that it is a subgroup $((R/\lb
R)^{{\fr}-\mu^{p-1}}\times\Z/p\Z)/\left<(\mu,0)\right>$. In
particular the quotient is well defined. The proof is very easy.
\end{proof}
\begin{rem}\label{rem:v(mu)<v(lb) allora j=0}
If $v(\mu)<v(\lb)$ then $(a,j)\in \Phi_{\mu,\lb}$ implies $j=0$.
If $char(R)=p$ this is trivial. If $char(R)=0$, by
$a^p=\mu^{p-1}a\in R/\lb R$ we obtain
$$pa-j\mu- \frac{p}{\mu^{p-1}}a^p\equiv  -j\mu\mod \lb,$$
which implies $j=0$.
\end{rem}
\begin{prop}\label{prop:rad_p} 
 Let $\mu,\lb \in R\setminus\{0\}$. 
Then 
the map 
\begin{align*}
\Phi_{\mu,\lb}&\too \{[\clE^{(\mu,\lb;F,j)}]; (F,j)\in \rad \} \\
 (a,j)&\longmapsto
[\clE^{(\mu,\lb;E_p(a,\mu,T),j)}]
\end{align*}
is an isomorphism of groups.
\end{prop}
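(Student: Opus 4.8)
The plan is to deduce the statement from Theorem \ref{teo:ext1(glx,gmx)}, which already provides the isomorphism $\beta\colon\rad\to\{[\clE^{(\mu,\lb;F,j)}];(F,j)\in\rad\}$. It therefore suffices to construct a group isomorphism $\gamma\colon\Phi_{\mu,\lb}\to\rad$ with $\beta\circ\gamma$ equal to the asserted map, and the obvious candidate is $\gamma(a,j)=(E_p(a,\mu;T),j)$. By \ref{lem:suriettivit� mappa tra hom}(ii) the assignment $a\mapsto E_p(a,\mu;T)$ is an isomorphism of groups $(R/\lb R)^{\fr-\mu^{p-1}}\to\Hom_{S_\lb-gr}(\gmx{1},\gmSl)$, so $(a,j)\mapsto(E_p(a,\mu;T),j)$ is an isomorphism of the ambient products; since $E_p(\mu,\mu;T)=1+\mu T$ by \eqref{eq:E_p(a,mu;T)}, it carries $\langle(\mu,0)\rangle$ onto $\langle(1+\mu T,0)\rangle$ and hence descends to the quotients. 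Thus everything reduces to checking that, before quotienting, the defining relation of $\Phi_{\mu,\lb}$ corresponds exactly to that of $\rad$. The degenerate cases $v(\mu)=0$ or $v(\lb)=0$ follow at once from \ref{eq:ext1(mup,mup)} and \ref{rem: clE sono lo Z/p}, so I assume $\mu,\lb\in\piR$.

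For such $\mu,\lb$ the element $E_p(a,\mu;T)$ already satisfies conditions (a),(b) of \ref{def:clE(mu,lb,F,i)}, so $(E_p(a,\mu;T),j)\in\rad$ is exactly condition (c), namely
\[
E_p(a,\mu;T)^p\equiv(1+\mu T)^j\pmod{\left(\tfrac{(1+\mu T)^p-1}{\mu^p},\ \lb^p\right)}.
\]
Both sides represent homomorphisms $\gmx{1}\to\gmSlp$, and, as in the proof of \ref{lem:suriettivit� mappa tra hom}(ii), such a homomorphism is determined by its coefficient of $T$ via the recursion of \cite[3.5,3.7]{SS7}. Hence (c) is equivalent to the equality, in $R/\lb^p R$, of the $T$-coefficients of $E_p(a,\mu;T)^p$ (reduced modulo the relation) and of $(1+\mu T)^j$, the latter being $j\mu$.

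It remains to compute the $T$-coefficient of $E_p(a,\mu;T)^p$ modulo $\tfrac{(1+\mu T)^p-1}{\mu^p}$, and this is the technical heart. When $char(R)=p$ the relation is simply $T^p=0$, whence $E_p(a,\mu;T)^p=1$ and the condition collapses to $j\mu\equiv0\bmod\lb^p$, the defining relation of $\Phi_{\mu,\lb}$. When $char(R)=0$ and $p=2$ one has $E_p(a,\mu;T)=1+aT$, and using $T^2=-\tfrac{2}{\mu}T$ one gets $E_p(a,\mu;T)^2=1+(2a-\tfrac{2}{\mu}a^2)T$, so the condition reads $2a-j\mu=\tfrac{2}{\mu}a^2$, exactly as in $\Phi_{\mu,\lb}$. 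For $char(R)=0$ and $p>2$ one must reduce $E_p(a,\mu;T)^p$ (of degree $p(p-1)$) modulo the relation using $T^p\equiv-\tfrac{p}{\mu^{p-1}}T+(\text{higher order in }T)$; equivalently, by \eqref{eq:p a} and the restriction formula \ref{lem:suriettivit� mappa tra hom}(iii) this $T$-coefficient is $i^*(p[\tilde a])$. The main obstacle is the valuation bookkeeping showing that all contributions beyond the two leading ones lie in $\lb^p R$, so that the coefficient equals $pa-\tfrac{p}{\mu^{p-1}}a^p$ modulo $\lb^p$; this rests on $(p-1)v(\mu)\le v(p)$ (whence $v(p/\mu^{p-1})\ge0$ and $v(p)\ge v(\lb)$) and on the congruence $a^p\equiv\mu^{p-1}a\bmod\lb$, carried out component by component in the spirit of \ref{lem:somma termine per termine} (the case $p=2$ being the direct computation above, so that the modified Verschiebung $\widetilde V$ of \eqref{eq:psi per p=2} never needs to be invoked separately). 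This yields $pa-j\mu=\tfrac{p}{\mu^{p-1}}a^p$, completing the condition-matching; consequently $\gamma$ is an isomorphism and $\beta\circ\gamma$ is the map of the statement.
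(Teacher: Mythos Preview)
Your approach is essentially the same as the paper's: reduce to proving that $(a,j)\mapsto(E_p(a,\mu;T),j)$ induces an isomorphism $\Phi_{\mu,\lb}\to\rad$, then translate the $\rad$-condition into a scalar equation via the Sekiguchi--Suwa Witt-vector description. The paper phrases this as the existence of $\mathbf{b}\in\widehat{W}(R/\lb^pR)^{\fr-[\mu^{p(p-1)}]}$ with $p[a]-j[\mu]=[\tfrac{p}{\mu^{p-1}}]\mathbf{b}+V(\mathbf{b})$ (this is \eqref{eq:ker i_*} combined with \eqref{eq:phi*:W(R)too W(R)}), and then solves component by component to find $\mathbf{b}=[a^p]$ and recover the scalar condition. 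Your reformulation via the $T$-coefficient and the restriction formula \ref{lem:suriettivit� mappa tra hom}(iii) is an equivalent packaging of the same computation.

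Two points deserve attention. First, you invoke (iii) at level $\lb^p$ rather than $\lb$; the statement of (ii)--(iii) assumes $(p-1)v(\lb)\le v(p)$, which becomes $p(p-1)v(\lb)\le v(p)$ after substitution and is not given. The proof of (iii), however, only uses \eqref{eq:phi*:W(R)too W(R)}, whose hypothesis $p^2\equiv 0\bmod\lb^p$ does follow from $(p-1)v(\lb)\le v(p)$, so the formula is still valid---but this should be said. Second, the ``valuation bookkeeping'' you flag as the main obstacle is exactly where the paper does the work: it uses the identity $p[a]\equiv(pa,a^p,0,\dots)\bmod p^2$ from \cite[5.10]{SS4} (which you do not mention), together with \ref{lem:somma termine per termine}, and splits into the subcases $(p-1)v(\mu)\ge v(\lb)$ and $(p-1)v(\mu)<v(\lb)$, in the latter first reducing to $j=0$ via \ref{lem:v(mu)>= v(lb)} and \ref{rem:v(mu)<v(lb) allora j=0}. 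Your $T$-coefficient formulation can in fact bypass this case split (one checks directly that all terms beyond $pa-\tfrac{p}{\mu^{p-1}}a^p$ have valuation $\ge 2v(p)\ge pv(\lb)$), which is a mild streamlining; but the argument is not complete without the $p[a]$ identity and those estimates made explicit.
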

\begin{rem}\label{rem:0 in Phi}
It is clear that if $(0,j)\in \Phi_{\mu,\lb}$, with $j\neq 0$,
then $\mu\equiv 0 \mod \lb^p$.
\end{rem}
%
\begin{proof}
If $v(\lb)=0$ clearly $\Phi_{\mu,\lb}$ is the group $\{0\}\times
    \Z/p\Z$, while if $v(\mu)=0$ and $v(\lb)>0$ it is the trivial
    group. So $\Phi_{\mu,\lb}\simeq \rad \simeq \{[\clE^{(\mu,\lb;F,j)}]; (F,j)\in \rad \}$ in these two cases (see \ref{rem: clE sono lo Z/p}).

 We now suppose $\mu,\lb \in \piR$.
By \ref{teo:ext1(glx,gmx)} we have an isomorphism $\rad
\too\{[\clE^{(\mu,\lb;F,j)}]; (F,j)\in \rad \}$ given by
$(F,j)\mTo [\clE^{(\mu,\lb;F,j)}]$. We now prove that the map
 $\Phi_{\mu,\lb}\too \rad$, given by $(a,j)\too (E_p(a,\mu,T),j)$ is an isomorphism. We distinguish some cases.

\begin{tabular}{|c|}
  \hline
  $char(R)=p$.\\
  \hline
\end{tabular}
By \ref{lem:suriettività mappa tra hom}, \eqref{eq:phi*:W(R)too
W(R)}  and \eqref{eq:ker i_*} it follows that $\rad$ is isomorphic
to
\begin{equation*}
\begin{aligned}\label{eq:rad p} \bigg\{(a,j)\in
(R/\lb R)^{{\fr}-\mu^{p-1}}\times \Z/p\Z|\exists \bbf{b} \in
{\widehat{W}(R/\lb^p R)}^{\fr-[\mu^{p{(p-1)}}]}
\\ \text{ such} \text{ that }
p[a]-j[\mu]=&V(\textit{\textbf{b}})\in {\widehat{W}(R/\lb^p R)}
\bigg\}/\left<(\mu,0)\right>.
\end{aligned}
\end{equation*}
This group is in fact $\Phi_{\mu,\lb}$. Indeed, since $\fr
V=V\fr=p$, then
$j[\mu]=p[a]-V(\textit{\textbf{b}})=V([a^p]-\bbf{b})$ if and only
if $j[\mu]=[j\mu]=0$ and $\textit{\textbf{b}}=[a^p]$.

\begin{tabular}{|c|}
  \hline
  $char(R)=0$ and $p>2$.\\
  \hline
\end{tabular}
 By \ref{lem:suriettività mappa tra hom},  \eqref{eq:phi*:W(R)too W(R)}, \eqref{eq:p a} and
\eqref{eq:ker i_*} it follows that $\rad$ is isomorphic to

\begin{equation*}
\begin{aligned} \bigg\{(a,j)\in
(R/\lb R)^{{\fr}-\mu^{p-1}}\times \Z/p\Z|\exists \bbf{b} \in
{\widehat{W}(R/\lb^p R)}^{\fr-[\mu^{p{(p-1)}}]}
\\ \text{ such} \text{ that }
p[a]-j[\mu]=[\frac{p}{\mu^{p-1}}]&\textit{\textbf{b}}+V(\textit{\textbf{b}})\in
{\widehat{W}(R/\lb^p R)} \bigg\}/\left<(\mu,0)\right>.
\end{aligned}
\end{equation*}
We prove that this group is in fact $\Phi_{\mu,\lb}$.
 Let $a,j$ and
$\textit{\textbf{b}}=(b_0,b_1,\dots)$ be as above. First of all we
observe that if $v(\mu)<v(\lb)$ then,  by \ref{lem:v(mu)>= v(lb)},
$(\clE^{(\mu,\lb;E_p(a,\mu,T),j)})_K=\clE_{j,K}\simeq
\mu_{p,K}\times \mu_{p,K}$. This implies, together to
\ref{rem:v(mu)<v(lb) allora j=0}, that if $v(\mu)<v(\lb)$ we can assume $j=0$. 

Moreover we remark that, by \cite[5.10]{SS4}, we have
\begin{equation*}
p[a]\equiv (pa,a^p,0,\dots)\mod p^2.
\end{equation*}

We first assume  $(p-1)v(\mu)\geq v(\lb)$.  We observe that
$j[\mu]\in \widehat{W}(R/\lb^p R)^{\fr}$. Indeed if $v(\mu)\ge
v(\lb)$ this is clear; while if  $v(\mu)<v(\lb)$ we recall that
$j=0$.
    Since also $p[a], \bbf{b}\in
\widehat{W}(R/\lb^p R)^{\fr}$, then, $V(\bbf{b})=
p[a]-j[\mu]-[\frac{p}{\mu^{p-1}}]\textit{\textbf{b}}\in
\widehat{W}(R/\lb^p R)^{\fr}$; hence by \ref{lem:somma termine per
termine},
\begin{equation}\label{eq:pa-kmu}
p[a]-j[\mu]=(pa-j\mu,a^p,0,0,\dots,0,\dots)\in
\widehat{W}(R/\lb^pR).
\end{equation}
and
$$
[\frac{p}{\mu^{p-1}}]\textit{\textbf{b}}+V(\textit{\textbf{b}})=
(\frac{p}{\mu^{p-1}} b_0, (\frac{p}{\mu^{p-1}})^pb_1+
b_0,\dots,(\frac{p}{\mu^{p-1}})^{p^{i+1}}b_{i+1}+b_i,\dots).
$$
 Comparing the above equations it follows
\begin{align*}
&(\frac{p}{\mu^{p-1}})^{p^{i+1}}b_{i+1}+b_i= 0 \quad\text{ for } i\ge 1\\
&(\frac{p}{\mu^{p-1}})^pb_1+ b_0= a^p     \\
&\frac{p}{\mu^{p-1}}b_0= pa-j\mu.
\end{align*}
Since  there exists $r\ge 0$ such that $b_i=0$ for any $i\ge r$
then $b_i= 0$ if $i\ge 1$,  $b_0=a^p$ and
$pa-j\mu=\frac{p}{\mu^{p-1}}a^p$.

While if $v(\mu)<(p-1) v(\lb)$ then $v(p)-(p-1)v(\mu)>v(\lb)$.
Therefore $(\frac{p}{\mu^{p-1}})^{p^i}=0\in R/\lb^p R$ for $i>0$.
Thus we have
\begin{equation}\label{eq:pmup-1+V(b)}
[\frac{p}{\mu^{p-1}}]\textit{\textbf{b}}+V(\textit{\textbf{b}})=(\frac{p}{\mu^{p-1}}b_0,0,\dots)+
V(\textit{\textbf{b}})=(\frac{p}{\mu^{p-1}}b_0,b_0,b_1,\dots).
\end{equation}
Since $j=0$  we have
$$
(pa, a^p,0,\dots)=(\frac{p}{\mu^{p-1}}b_0,b_0,b_1,\dots).
$$
This gives $b_i=0$ if $i\ge 0$, $b_0=a^p$ and
$$
pa-j\mu =pa=\frac{p}{\mu^{p-1}}a^p
$$
as required.

\begin{tabular}{|c|}
  \hline
  $char(R)=0$ and  $p=2$ \\
  \hline
\end{tabular}
Let us consider the isomorphism
$$
(\xi^0_{R/\lb R})_p\times \id: (R/\lb R)^{\fr-[\mu]}\times
\Z/2\Z\too \Hom_{S_\lb-gr}(G_{\mu,1},\gmSl)\times \Z/2\Z.
$$
We have just to prove that $((\xi^0_{R/\lb R})_p\times
\id)^{-1}(\rad)=\Phi_{\mu,\lb}$.

Let $(F,j)\in \Hom_{S_\lb-gr}(G_{\mu,1},\gmSl)\times \Z/2\Z$. Then
$F(T)=1+a T$. Moreover the class of $ (F,j) \mod \left< (1+\mu
T,0)\right >$ belongs to $\rad $ if and only if
\begin{equation}\label{eq:(F,j) in rad p=2}
(1+a T)^2 =(1+ \mu T)^j=(1+j \mu T) \in
\Hom_{S_\lb-gr}(G_{\mu,1},\gmSl)
\end{equation}
Recall that $T^2 =-\frac{2}{\mu} T\in R[G_{\mu,1}]$. Therefore
\eqref{eq:(F,j) in rad p=2} is satisfied if and only if
$$
2a  -\frac{2}{\mu}a^2= j \mu \in R/\lb^2 R.
$$
This is equivalent to say the class of $(a,j)\mod
\left<(\mu,0)\right> $ belongs to  $\Phi_{\mu,\lb}$.

\end{proof}


We now  find explicitly all the solutions $(a,j)\in (R/\lb
R)^{{\fr}-\mu^{p-1}}\times \Z/p\Z$ of the equation $pa-j\mu=a^p\in
R/\lb^p R$. By \ref{prop:rad_p} this means  finding explicitly all
the extensions of type $\clE^{(\mu,\lb;F,j)}$. Let us consider the
restriction map
$$
 r: \{[\clE^{(\mu,\lb;F,j)}]; (F,j)\in \rad \} \too H_0^2(\mu_{p,K},\mu_{p,K})\simeq \Z/p\Z.
$$
We remark that it coincides with the projection on the second
component
\begin{align*}
p_2:\Phi_{\mu,\lb}\too \Z/p\Z.
\end{align*}

 The proof of the following lemma is trivial.
\begin{lem}\label{lem:modelli esistono ss p_2 surj}
 There is an extension of
$\gmx{1}$ by $\glx{1}$ which is a model
 of $\mu_{p^2,K}$ if and only if $p_2$ is surjective.
\end{lem}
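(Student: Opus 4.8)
The plan is to reduce everything to the identification $r=p_2$ recorded just above the statement, so that ``being a model of $\mu_{p^2,K}$'' becomes the purely numerical condition $j\not\equiv 0\bmod p$. First I would recall from the commutative diagram preceding \ref{def:clE(mu,lb,F,i)} that the generic fibre of $\clE^{(\mu,\lb;\widetilde{F},j)}$ is $\clE_{j,K}$, and from the computation of $\clE_{j,S}$ in \S\ref{sec:two exact sequences} that $\clE_{j,K}\simeq\mu_{p^2,K}$ precisely when $j\not\equiv 0\bmod p$, whereas $\clE_{0,K}\simeq \mu_{p,K}\times\mu_{p,K}$. Thus, transporting through the isomorphism $\Phi_{\mu,\lb}\simeq\{[\clE^{(\mu,\lb;F,j)}];(F,j)\in\rad\}$ of \ref{prop:rad_p}, the elements $(a,j)\in\Phi_{\mu,\lb}$ whose associated group scheme is a model of $\mu_{p^2,K}$ are exactly those with $p_2(a,j)=j\neq 0$.

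For the backward implication I would argue directly: if $p_2$ is surjective, pick $(a,j)\in\Phi_{\mu,\lb}$ with $j\neq 0$; the corresponding extension $\clE^{(\mu,\lb;E_p(a,\mu,T),j)}$ then has generic fibre $\clE_{j,K}\simeq\mu_{p^2,K}$ and is the required model. For the forward implication, suppose $\clE$ is any extension of $\gmx{1}$ by $\glx{1}$ that is a model of $\mu_{p^2,K}$. Since $\clE_K\simeq\mu_{p^2,K}$ has a unique subgroup scheme of order $p$, the prescribed sub and quotient are forced to be the canonical ones, so as an extension $\clE_K$ is one of the Kummer extensions $\clE_{j,K}$ with $j\neq 0$; in particular its generic class lies in the subgroup $\{[\clE_{j,K}];j\in\Z/p\Z\}$. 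By \ref{cor:modelli di Z/p^2Z} this forces $[\clE]\in\{[\clE^{(\mu,\lb;F,j)}];(F,j)\in\rad\}$ with the same nonzero $j$, and \ref{prop:rad_p} then yields $(a,j)\in\Phi_{\mu,\lb}$ with $p_2(a,j)=j\neq 0$. As $\Z/p\Z$ is simple, a single nonzero value of $p_2$ already makes $p_2$ surjective.

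The only step that is more than bookkeeping is the forward direction, and there the delicate point is precisely to rule out that a model of $\mu_{p^2,K}$ has generic class outside the subgroup $\{[\clE_{j,K}]\}$ (for instance in a twisted class measured by $H^1(\Sp(K),\Z/p\Z)$). This is exactly what the uniqueness of the order-$p$ subgroup of $\mu_{p^2,K}$ guarantees, combined with \ref{cor:modelli di Z/p^2Z} identifying the preimage of $\{[\clE_{j,K}]\}$ with the extensions of type $\clE^{(\mu,\lb;F,j)}$. Beyond this observation I expect no genuine obstacle, in agreement with the claim that the proof is trivial.
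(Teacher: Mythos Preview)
Your argument is correct and is precisely the unpacking of what the paper declares ``trivial'' without proof. The backward direction is immediate, and for the forward direction you correctly invoke \ref{cor:modelli di Z/p^2Z} together with the observation that the generic extension class of any model of $\mu_{p^2,K}$ must lie in $\{[\clE_{j,K}];\,j\neq 0\}$; your justification via the uniqueness of the order-$p$ subgroup (so that the extension structure is determined up to automorphisms of $\mu_{p,K}$, which only rescale the index $j$ by a unit in $(\Z/p\Z)^*$) is exactly the right reason. The paper gives no proof at all, so there is nothing further to compare.
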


We now describe  the kernel of the above map.
\begin{lem}\label{lem:ker p2}
Under the assumptions of \ref{prop:rad_p} we have
\begin{align*}
\ker p_2= \bigg\{(a,0)\in R/\lb R\times&\Z/p\Z \text{ s. t., for
any
  lifting }\widetilde{a}\in R,\\ &
v(\widetilde{a}^p-\mu^{p-1}\widetilde{a})\ge\max\{pv(\lb)+(p-1)v(\mu)-v(p),v(\lb)\}\bigg\}\bigg/\left<{(\mu,0)}\right>
\end{align*}
if $char(R)=0$, and
$$
\ker p_2=\bigg( ( R/\lb
R)^{\fr-\mu^{p-1}}\times\{0\}\bigg)\bigg/\left<{(\mu,0)}\right>
$$
if $char(R)=p$.
\end{lem}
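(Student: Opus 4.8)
The plan is to unwind the definition of $\Phi_{\mu,\lb}$ directly, since the map $p_2\colon \Phi_{\mu,\lb}\too\Z/p\Z$, $(a,j)\mapsto j$, is well defined on the quotient by $\left<(\mu,0)\right>$ (the generator has trivial second coordinate) and its kernel consists exactly of the classes represented by pairs $(a,0)$ lying in $\Phi_{\mu,\lb}$. First I would dispose of the case $char(R)=p$: there the defining constraint of $\Phi_{\mu,\lb}$ is $j\mu\equiv 0\bmod\lb^p$, which is vacuous once $j=0$, so $\ker p_2$ is the whole image of $(R/\lb R)^{\fr-\mu^{p-1}}\times\{0\}$ in the quotient, which is the asserted formula.

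For $char(R)=0$ I would translate both defining conditions of $(a,0)\in\Phi_{\mu,\lb}$ into valuation inequalities for a lift $\widetilde a\in R$. Membership $a\in (R/\lb R)^{\fr-\mu^{p-1}}$ says $\widetilde a^p-\mu^{p-1}\widetilde a\equiv 0\bmod\lb$, i.e. $v(\widetilde a^p-\mu^{p-1}\widetilde a)\ge v(\lb)$. The remaining equation, with $j=0$, is $p\widetilde a=\frac{p}{\mu^{p-1}}\widetilde a^p$ in $R/\lb^p R$; since $(p-1)v(\mu)\le v(p)$ the element $\frac{p}{\mu^{p-1}}$ lies in $R$ with valuation $v(p)-(p-1)v(\mu)$, and factoring it out gives $\frac{p}{\mu^{p-1}}(\widetilde a^p-\mu^{p-1}\widetilde a)\equiv 0\bmod\lb^p$, i.e. $v(\widetilde a^p-\mu^{p-1}\widetilde a)\ge pv(\lb)+(p-1)v(\mu)-v(p)$. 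Taking both together yields exactly the bound $v(\widetilde a^p-\mu^{p-1}\widetilde a)\ge\max\{pv(\lb)+(p-1)v(\mu)-v(p),\,v(\lb)\}$, and the description of $\ker p_2$ follows after passing to the quotient by $\left<(\mu,0)\right>$.

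The one point that needs genuine checking, and which I expect to be the main (if modest) obstacle, is the phrase ``for any lifting'': I must show the combined valuation condition is independent of the chosen lift $\widetilde a$. Writing a second lift as $\widetilde a+\lb s$, the difference $(\widetilde a+\lb s)^p-\mu^{p-1}(\widetilde a+\lb s)-(\widetilde a^p-\mu^{p-1}\widetilde a)$ is a sum of the terms $\binom{p}{i}\widetilde a^{p-i}(\lb s)^i$ for $1\le i\le p$ together with $-\mu^{p-1}\lb s$. Using the standing hypotheses $(p-1)v(\lb)\le v(p)$ and $(p-1)v(\mu)\le v(p)$, I would check term by term that each of these has valuation at least $pv(\lb)+(p-1)v(\mu)-v(p)$: the term $-\mu^{p-1}\lb s$ and the term $(\lb s)^p$ are the two binding cases, handled respectively by $v(p)\ge(p-1)v(\lb)$ and $v(p)\ge(p-1)v(\mu)$, while the mixed terms ($1\le i\le p-1$) carry a spare factor of $p$ and so absorb a second copy of $v(p)$. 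Hence the inequality defining $\ker p_2$ does not depend on the lift, completing the proof; this is essentially the same well-definedness computation underlying the lemma-definition of $\Phi_{\mu,\lb}$.
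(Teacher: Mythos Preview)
Your argument is correct and follows the same route as the paper: in characteristic $p$ the defining condition becomes vacuous for $j=0$, and in characteristic $0$ you rewrite $p\widetilde a-\frac{p}{\mu^{p-1}}\widetilde a^p=-\frac{p}{\mu^{p-1}}(\widetilde a^p-\mu^{p-1}\widetilde a)$ and read off the two valuation bounds. The only addition is your explicit check that the combined bound is lift-independent; the paper omits this, having implicitly absorbed it into the well-definedness of $\Phi_{\mu,\lb}$ in its lemma-definition.
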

\begin{proof}
The case $char(R)=p$ is trivial. Let us now suppose $char(R)=0$.
Let $(a,0)\in \ker p_2\cap R/\lb R\times \Z/p\Z $. By definition
we have that
$$
pa=\frac{p}{\mu^{p-1}}a^p\in R/\lb^p R \quad \text{ and } \quad
a^p-\mu^{p-1}a=0\in R/\lb R.
$$
Let $\widetilde{a}\in R$ be a lifting of $a$. Since
$a^p=\mu^{p-1}a\in R/\lb R$ then
$\widetilde{a}^p-\mu^{p-1}\widetilde{a}=b$ with $v(b)\ge v(\lb)$. 
Hence we have $pa=\frac{p}{\mu^{p-1}}a^p\in R/\lb^p R$ if and only
if $\frac{p}{\mu^{p-1}}b=0 \in R/\lb^p R$. This happens if and
only if $v(\widetilde{a}^p-\mu^{p-1}\widetilde{a})\ge
pv(\lb)+(p-1)v(\mu)-v(p)$.

%
\end{proof}
\begin{ex}\label{ex:ker p_2}
Let $char(R)=0$. If $v(\mu)\ge v(\lb)$, which is the case of
models of $\mu_{p^2,K}$ by \ref{lem:v(mu)>= v(lb)}, then
\begin{align*}\ker p_2= \bigg\{(a,0)\in R/\lb R\times &\Z/p\Z
\text{ s. t., for any
  lifting }\widetilde{a}\in R,\\ &
pv(\widetilde{a})\ge\max\{pv(\lb)+(p-1)v(\mu)-v(p),v(\lb)\}\bigg\}
\end{align*}
and  it is easy to show that
 $p_2$ is injective if and only if $v(\lb)\le 1$ or
$v(p)-(p-1)v(\mu)<p$.
\end{ex}
 We now compute $\Phi_{\mu,\lb}$.
\begin{prop}\label{cor:p2 surjective}
Let $\mu,\lb\in \pi R\setminus\{0\}$. In the case $char(R)=0$ we
will suppose that $R$ contains a distinguished primitive $p^2$-th
root of unity $\zeta_2$ (and
$(p-1)v(\mu),(p-1)v(\lb)\le v(p)$ as usual).

Let $char(R)=0$.
\begin{itemize}
\item[(i)]  If $v(\mu)<v(\lb)$, or   $v(\lb)\le
v(\mu)<pv(\lb)$ and
$pv(\mu)-v(\lb)< v(p)$ then $p_2$ is trivial. 
\item[(ii)]If    $v(\lb)\le v(\mu)<pv(\lb)$ and
$pv(\mu)-v(\lb)\ge v(p)$ then $p_2$ is surjective and $
\Phi_{\mu,\lb}$ is isomorphic to  the group
$$
\{(j \eta \frac{\mu}{\lb_{(1)}}+ \alpha,j); (\alpha,0)\in
\ker(p_2) \text{ and } j\in \Z/p\Z\}
$$
For the definitions of $\eta$  see \ref{ex:Z/p^2Z SS}.
\item[(iii)] If $v(\mu)\ge pv(\lb)$, then $p_2$ is surjective and
$ \Phi_{\mu,\lb}$ is isomorphic to the group $$ \{(\alpha,j);
(\alpha,0)\in \ker(p_2) \text{ and } j\in \Z/p\Z\}.
$$
\end{itemize}
Let $char(R)=p$.
\begin{itemize}
\item[(iv)]$p_2$ is surjective if and only if $v(\mu)\ge pv(\lb)$.
If $p_2$ is surjective then $ \Phi_{\mu,\lb}$ is isomorphic to the
group
$$ \{(\alpha,j); (\alpha,0)\in \ker(p_2) \text{ and } j\in
\Z/p\Z\}
$$
\end{itemize}
\end{prop}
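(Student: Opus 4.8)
The plan is to reduce the whole statement to two questions about the projection $p_2\colon \Phi_{\mu,\lb}\to\Z/p\Z$: first whether $p_2$ is surjective, and second, when it is, to exhibit a single explicit $a_0$ with $(a_0,1)\in\Phi_{\mu,\lb}$. Once such an $a_0$ is available, the asserted structure is automatic: the map $((\alpha,0),j)\mapsto (\alpha,0)+j\cdot(a_0,1)=(\alpha+ja_0,j)$ is a homomorphism $\ker p_2\times\Z/p\Z\to\Phi_{\mu,\lb}$, visibly injective, and surjective because for $(a,j)\in\Phi_{\mu,\lb}$ the class $(a-ja_0,0)=(a,j)-j(a_0,1)$ lies in $\ker p_2$. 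This is exactly the identification in (ii), (iii), (iv), with $a_0=\eta\mu/\lb_{(1)}$ in (ii) and $a_0=0$ in (iii) and (iv). So the content is entirely in deciding surjectivity and, for (ii), in checking that this particular $a_0$ works; the group $\ker p_2$ itself is already described in \ref{lem:ker p2}.

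Cases (iii) and (iv) I would dispatch at once. In both, $v(\mu)\ge pv(\lb)$ gives $\mu\equiv 0\bmod\lb^p$, so the term $j\mu$ vanishes in $R/\lb^p R$ for every $j$; hence $(0,1)\in\Phi_{\mu,\lb}$ (surjectivity), and the defining relation of $\Phi_{\mu,\lb}$ for a pair $(a,j)$ collapses to the relation defining $\ker p_2$ for $(a,0)$, yielding $\Phi_{\mu,\lb}=\{(\alpha,j)\colon (\alpha,0)\in\ker p_2\}$ directly. For (iv) one notes in addition that, $j$ being a unit modulo $p$ in characteristic $p$, the condition $j\mu\equiv 0\bmod\lb^p$ with $j\ne 0$ holds if and only if $v(\mu)\ge pv(\lb)$, which gives the stated equivalence. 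For case (i) I would show $p_2=0$: when $v(\mu)<v(\lb)$ this is \ref{rem:v(mu)<v(lb) allora j=0}; otherwise, with $v(\lb)\le v(\mu)<pv(\lb)$ and $pv(\mu)-v(\lb)<v(p)$, take $(a,j)\in\Phi_{\mu,\lb}$ with a lifting satisfying $\widetilde a^{\,p}-\mu^{p-1}\widetilde a=b$, $v(b)\ge v(\lb)$, so that the defining relation becomes $j\mu+\tfrac{p}{\mu^{p-1}}b\equiv 0\bmod\lb^p$. Since $v(j\mu)=v(\mu)<pv(\lb)$ this term must be cancelled, forcing $v\!\left(\tfrac{p}{\mu^{p-1}}b\right)=v(\mu)$, i.e. $v(b)=pv(\mu)-v(p)$; but $pv(\mu)-v(\lb)<v(p)$ reads $pv(\mu)-v(p)<v(\lb)$, contradicting $v(b)\ge v(\lb)$. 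Hence $j=0$ always.

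The heart of the argument is case (ii), where I would prove that $a_0:=\eta\mu/\lb_{(1)}$ represents an element $(a_0,1)\in\Phi_{\mu,\lb}$. Note first $a_0\in R$, since $pv(\mu)\ge v(\lb)+v(p)\ge v(p)=p(p-1)v(\lb_{(2)})$ gives $v(\mu)\ge (p-1)v(\lb_{(2)})$, whence $v(a_0)=v(\mu)-(p-1)v(\lb_{(2)})\ge 0$. The essential input is the identity coming from $\Z/p^2\Z\simeq \clE^{(\lb_{(1)},\lb_{(1)};E_p(\eta,\lb_{(1)};T),1)}$ of \ref{ex:Z/p^2Z SS}: through \ref{prop:rad_p} this says $(\eta,1)\in\Phi_{\lb_{(1)},\lb_{(1)}}$, i.e. $\tfrac{p}{\lb_{(1)}^{p-1}}\bigl(\eta^p-\lb_{(1)}^{p-1}\eta\bigr)\equiv -\lb_{(1)}\bmod\lb_{(1)}^p$, and in particular $d:=\eta^p-\lb_{(1)}^{p-1}\eta$ has $v(d)=v(\lb_{(1)})$. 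Substituting, I get $a_0^{\,p}-\mu^{p-1}a_0=\mu^p d/\lb_{(1)}^p$, of valuation $pv(\mu)-v(p)\ge v(\lb)$ by hypothesis, so $a_0\in (R/\lb R)^{\fr-\mu^{p-1}}$; and $pa_0-\mu-\tfrac{p}{\mu^{p-1}}a_0^{\,p}=-\mu\bigl(1+\tfrac{pd}{\lb_{(1)}^p}\bigr)$, which by the displayed identity divided by $\lb_{(1)}$ is $\equiv 0\bmod \mu\lb_{(1)}^{p-1}$, and $v(\mu\lb_{(1)}^{p-1})=v(\mu)+v(p)\ge pv(\lb)$ because $pv(\lb)-v(\mu)\le (p-1)v(\lb)\le v(p)$. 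Both defining congruences of $\Phi_{\mu,\lb}$ thus hold, so $(a_0,1)\in\Phi_{\mu,\lb}$, $p_2$ is surjective, and the splitting above gives the stated description. The main obstacle is precisely this simultaneous valuation bookkeeping: one congruence is taken modulo $\lb$ and the other modulo $\lb^p$, and one must verify that the conjunction of the case hypotheses $v(\lb)\le v(\mu)$, $pv(\mu)-v(\lb)\ge v(p)$ together with the standing bounds $(p-1)v(\mu),(p-1)v(\lb)\le v(p)$ is exactly what makes both congruences go through.
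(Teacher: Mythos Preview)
Your proof is correct and follows essentially the same approach as the paper's: both arguments reduce to finding an explicit $(a_0,1)\in\Phi_{\mu,\lb}$ (namely $a_0=\eta\mu/\lb_{(1)}$ in (ii) via the identity from \ref{ex:Z/p^2Z SS}, and $a_0=0$ in (iii),(iv)), and both handle (i) by a valuation comparison showing the defining congruence cannot hold with $j\ne 0$. Your presentation is somewhat more explicit about the splitting $\ker p_2\times\Z/p\Z\simeq\Phi_{\mu,\lb}$ and in (i) you organize the valuation count through $b=\widetilde a^{\,p}-\mu^{p-1}\widetilde a$ rather than comparing $v(p\widetilde a)$ and $v(\tfrac{p}{\mu^{p-1}}\widetilde a^{\,p})$ directly, but these are cosmetic differences.
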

\begin{rem}\label{rem:valutazione di elementi di Phi1}Let us suppose $v(\lb)\le v(\mu)<pv(\lb)$. Let $(b,j)\in \Phi_{\mu,\lb}$ with $j\neq 0$. By \ref{rem:0 in Phi}, then $b\neq 0$. Let $\widetilde{b}\in
R$ be any of its lifting. Then
$v(\widetilde{b})=v(\eta\frac{\mu}{\lb_{(1)}})=v(\mu)-\frac{v(p)}{p}$.
Indeed, by the proposition, we have
$\widetilde{b}=j\eta\frac{\mu}{\lb_{(1)}}+\alpha$ for some $\alpha
\in R/\lb R$ with
$v(\widetilde{\alpha})>v(\eta\frac{\mu}{\lb_{(1)}})=v(\mu)-\frac{v(p)}{p}$,
where $\widetilde{\alpha}\in R$ is any lifting of $\alpha$.
\end{rem}
\begin{rem}We need the hypothesis $\zeta_{2}\in R$   just in the proof of (ii). If we  knew  an explicit solution $a$ (if there exists) of the
equation $pa-j \mu=\frac{p}{\mu^{p-1}}a \in R/\lb^p R$, also in
the case $(p-1)v(\mu)=(p-1)v(\lb)=v(p)$ but $\zeta_{2}\not \in
R$,    then a statement as above holds. This solution would play
the role of $\eta \frac{\mu}{\lb_{(1)}}$.
\end{rem}
\begin{proof}
\begin{itemize}
\item[i)] If $v(\mu)<v(\lb)$ the result follows from
\ref{lem:v(mu)>= v(lb)}. Let us now suppose $v(\lb)\le
v(\mu)<pv(\lb)$ and $pv(\mu)-v(\lb)< v(p)$. Since the target of
$p_2$ is $\Z/p\Z$ then $p_2$ is either surjective or trivial. Let
us suppose it is surjective. This is equivalent to saying that
\begin{equation}\label{eq:soluzione pa-jmu...}
pa-j\mu=\frac{p}{\mu^{p-1}}a^p\in R/\lb^p R
\end{equation}
has a solution $a\in (R/\lb R)^{\fr}$ with $j\neq 0$. Since
$v(\mu)<pv(\lb)$ then $a\neq 0$. Let $\widetilde{a}\in R$ be a
lifting of $a$. Since $v(\mu)<v(p)$ and
$v(\frac{p}{\mu^{p-1}}\widetilde{a}^p) < v(p \widetilde{a})$, then
\begin{equation}\label{eq:soluzione pa=... segue che...}
v(\mu)=v(p)-(p-1)v(\mu)+pv(\widetilde{a}),
\end{equation}
 From $a\in (R/\lb
R)^{{\fr}}$ it follows $pv(\widetilde{a})\ge v(\lb)$. Hence, by
\eqref{eq:soluzione pa=... segue che...}, $pv(\mu)- v(\lb)\ge
v(p)$, against the hypothesis.

\item[ii)]
We know by \ref{ex:Z/p^2Z SS} and \ref{prop:rad_p} that
$$
p\eta-\lb_{(1)}=\frac{p}{\lb_{(1)}^{p-1}}\eta^p\in R/ \lb_{(1)}^p
R.
$$
We recall that $v(\eta)=v(\lb_{(2)})$.  Since $p
v(\lb_{(1)})-v(\lb_{(1)})+v(\mu)\ge pv(\mu)\ge pv(\lb)$, if we
divide the above equation by $\frac{\lb_{(1)}}{\mu}$, we obtain
\begin{equation*}
p\eta \frac{\mu}{\lb_{(1)}}-\mu=
\frac{p}{\mu^{(p-1)}}(\frac{\mu}{\lb_{(1)}}\eta)^p\in R/\lb^p R.
\end{equation*}
We remark that $\eta \frac{\mu}{\lb_{(1)}}\in (R/\lb R)^{{\fr}}$,
since, by hypothesis, $v(\big(\eta
\frac{\mu}{\lb_{(1)}}\big)^p)=pv(\mu)-v(p)\ge v(\lb)$. Clearly
$j\eta \frac{\mu}{\lb_{(1)}}$ is a solution of \eqref{eq:soluzione
pa-jmu...} for any $j\in \Z/p\Z$.

Therefore  $p_2$ is surjective and  $\Phi_{\mu,\lb}$ is isomorphic
to the group
$$
\{(j \eta \frac{\mu}{\lb_{(1)}}+ \alpha,j); (\alpha,0)\in
\ker(p_2) \text{ and } j\in\ \Z/p\Z\}
$$
\item[iii)]
If $v(\mu)\ge pv(\lb)$ then we have that $\mu=0\in R/\lb^p R$.  We
remark that $(0,j)\in \Phi_{\mu,\lb}$ for any $j\in \Z/p\Z$. This
implies that $p_2$ is surjective and, moreover, that
$(\alpha,j)\in (R/\lb R\times \Z/p\Z) \cap \Phi_{\mu,\lb}$ if and
only if $(\alpha,0)\in \ker(p_2)$.

\item[(iv)] 
The result immediately follows from the explicit description of
the map $p_2$ in the case $char(R)=p$.
\end{itemize}
\end{proof}
\begin{ex} Assume $char(R)=0$, $\zeta_{2}\in R$ and $v(\mu)=v(\lb)>0$. Then $p_2$ is surjective 
if and only if  $v(\mu)=v(\lb)=v(\lb_{(1)})$.
\end{ex}
\begin{ex}\label{ex:Ext 1 con mu=lb_(1)}
Let us suppose $char(R)=0$, $\zeta_{2}\in R$ and  
$v(\mu)=v(\lb_{(1)})\ge v(\lb)$. We have  $G_{\mu,1}\simeq
\Z/p\Z$. Without loss of generality we can suppose
$\mu=\lb_{(1)}$. Then $p_2$ is an isomorphism. Indeed in this case
$\ker(p_2)=0$ by \ref{ex:ker p_2} and it is surjective by
\ref{cor:p2 surjective}(ii). However it is possible to prove this
fact in a more direct way. We have the following commutative
diagram of exact rows
$$
\xymatrix{0\ar[r] & H_0^2(\Z/p\Z,\glx{1})\ar[d]\ar[r]&
\Ext^1_S(\Z/p\Z,\glx{1})\ar[r]\ar[d]&H^1(S,\glb)\ar[r]\ar[d]&0\\
 0\ar[r] & H_0^2(\Z/p\Z,\Z/p\Z)\ar[r]&
\Ext^1_K(\Z/p\Z,\Z/p\Z)\ar[r]&H^1(K,\glb)\ar[r]&0}
$$
where the exact rows come from \cite[III \S 6, 2.5]{DG}. Moreover
since $\Z/p\Z$ is constant and cyclic, using\cite[III \S 6,
4.2]{DG} and \cite[VIII \S4]{ser}   we have
$$
H_0^2(\Z/p\Z,\glx{1})\simeq G_{\lb,1}(S), \qquad \text{ and
}\qquad H_0^2(\Z/p\Z,(\glx{1})_K)\simeq G_{\lb,1}(K).
$$
Therefore the first vertical map is an isomorphism. It is in fact
$p_2$. This finally implies that there is an
 isomorphism
$$ H_0^2(\Z/p\Z,\glx{1})\simeq \{[\clE^{(\lb_{(1)},\lb;F_j,j)}] ;
j=0,\dots,p-1\},$$ where $F_j(T)=\sum_{k=0}^{p-1}\frac{(j
\eta)^k}{k!} T^k$.
\end{ex}

\subsection{Classification of models of $\mu_{p^2,K}$}
By the previous paragraphs we have a classification of extensions
of $G_{\mu,1}$ by $G_{\lb,1}$ whose generic fibre is isomorphic,
as group scheme, to $\mu_{p^2,K}$. But this classification is, a
priori, too fine for our tasks. We want here to forget the
structure of extension. We are only interested in the group scheme
structure. We observe that it could happen that two non isomorphic
extensions are isomorphic as group schemes. We here study when it
happens.

%
%
We now recall that by \ref{lem:modelli di Z/p^2 Z sono
estensioni},  \ref{cor:modelli di Z/p^2Z} and \ref{prop:rad_p} any
model of $\mu_{p^2,K}$ is of the form
$\clE^{(\mu,\lb;\tilde{F},j)}$ such that $j\neq 0$,
$\tilde{F}(T)\equiv \sum_{i=0}^{p-1}\frac{a^i}{i!}T^i\mod \lb$
with $(a,j)\in \Phi_{\mu,\lb}$. Moreover if $char(R)=0$ then
$v(p)\ge (p-1)v(\mu)\ge (p-1)v(\lb)$ (see  \ref{lem:v(mu)>= v(lb)}
for $v(\mu)\ge v(\lb)$), while if $char(R)=p$ then $v(\mu)\ge
pv(\lb)$ (see \ref{cor:p2 surjective}(iv)).  In these cases we
have
$$
\Phi_{\mu,\lb}=\bigg\{(a,j)\in (R/\lb R)^{{\fr}}\times\Z/p\Z
\text{ such that } pa-j\mu=\frac{p}{\mu^{p-1}}a^p\in R/\lb^p
R\bigg\}
$$
if $char(R)=0$  and
$$
\Phi_{\mu,\lb}=\bigg\{(a,j)\in (R/\lb R)^{{\fr}}\times\Z/p\Z
\text{ such that } j\mu\equiv 0\mod \lb^p \bigg\}
$$
if $char(R)=p$.
For $i=1,2$ let us consider $\clE^{(\mu_i,\lb_i;F_i,j_i)}$, models
of $\mu_{p^2,K}$. First of all we remark that there is an
injection
$$
r_K:\Hom_{S-gr}(\clE^{(\mu_1,\lb_{1};F_1,j_1)},
\clE^{(\mu_2,\lb_{2};F_2,j_2)})\too
\Hom_{K-gr}(\clE_{j_1,K},\clE_{j_2,K})
$$
given by
$$
f\longmapsto (\alpha^{(\mu_2,{\lb_2},\tilde{G})})_K\circ f_K \circ
{(\alpha^{(\mu_1,\lb_1,\tilde{F})})}^{-1}_K.
$$
  We recall that
$$
\Hom_{S-gr}({\clE_{j_1}},{\clE_{j_2}})\simeq
\Hom_{K-gr}(\clE_{j_1,K},\clE_{j_2,K})
$$
and the  elements are the morphisms
$$
\psi_{r,s}:{\clE_{j_1}}\too \clE_{j_2},
$$
which, on the level of Hopf algebras, are given by
\begin{equation}\label{eq:morfismi tra cle j}
\begin{aligned}
T_1&\longmapsto T_1^{\frac{rj_1}{j_2}}\\
T_2&\longmapsto T_1^sT_2^r,
\end{aligned}
\end{equation}
for some $r,s=0,\dots,p-1$.  Moreover the map
\begin{align*}
\Hom_{S-gr}({\clE_{j_1}},{\clE_{j_2}})&\too \Z/p^2\Z\\
\psi_{r,s}&\longmapsto r+\frac{p}{j_1}s
\end{align*}
is an isomorphism. So $\Hom_{S-gr}(\clE^{(\mu_1,\lb_{1};F_1,j_1)},
\clE^{(\mu_2,\lb_{2};F_2,j_2)})$ is a subgroup of $\Z/p^2\Z$
through the map $r_K$. We remark that the unique nontrivial
subgroup of $\Hom({\clE_{j_1}},{\clE_{j_2}})$ is
$\{\psi_{0,s};s=0,\dots,p-1\}$. Finally we have that any morphism
$\clE^{(\mu_1,\lb_{1},F_1,j_1)}\too
\clE^{(\mu_2,\lb_{2},F_2,j_2)})$ is given by
\begin{equation}\label{eq:def f}
\begin{aligned}
T_1&\too \frac{(1+\mu_1 T_1)^{\frac{rj_1}{j_2}}-1}{{\mu_2}} \\
T_2&\too \frac{(F_1(T_1)+\lb_{1} T_2)^{r}(1+\mu_1
T_1)^s-F_2(\frac{(1+\mu_1
T_1)^{\frac{rj_1}{j_2}}-1}{{\mu_2}})}{{\lb_{2}}},
\end{aligned}
\end{equation}
for some $r,s\in \Z/p\Z$. With abuse of notation we call it
$\psi_{r,s}$. We remark that the morphisms
$\psi_{r,s}:\clE^{(\mu_1,\lb_1;F_1,j_1)}\too
\clE^{(\mu_2,\lb_2;F_2,j_2)}$ which are model maps correspond, by
\eqref{eq:morfismi tra cle j}, to $r\neq 0$. In such a case
$\psi_{r,s}$ is a morphism of extensions, i.e. there exist
morphisms $\psi_1:G_{\lb_1,1}\too G_{\lb_2,1}$ and
$\psi_2:G_{\mu_1,1}\too G_{\mu_2,1}$ such that
\begin{equation}\label{eq:commutative diagram}
\begin{aligned}
\xymatrix@1{0\ar[r]&{G_{\lb_1,1}}\ar[r]^{}\ar[d]^{\psi_1}&{\clE^{(\mu_1,\lb_1;F_1,j_1)}}\ar^{\psi_{r,s}}[d]\ar[r]&{G_{\mu_1,1}}\ar[r]\ar[d]^{\psi_2}&0\\
                     0\ar[r]&{G_{\lb_2,1}}\ar[r]^{}&\clE^{(\mu_2,\lb_2;F_2,j_2)}\ar[r]^{}&{G_{\mu_2,1}}\ar[r]&0}
\end{aligned}
\end{equation}
commutes. More precisely $\psi_1$ is given by $T\mapsto
\frac{(1+\lb_1 T)^{r}-1}{{\lb_2}}$ and $\psi_2$ by $T\mapsto
\frac{(1+\mu T)^{\frac{rj_1}{j_2}}-1}{{\mu_2}}$. 

\begin{prop}
 For $i=1,2$, if $F_i(T)=\sum_{k=0}^{p-1}\frac{a_i^k}{k!}T^i$ and
${\scE}_i=\clE^{(\mu_i,\lb_{i};F_i,j_i)}$ are models of
$\mu_{p^2,K}$ we have
$$
\Hom_{S-gr}(\scE_1,\scE_2)=
\left\{%
\begin{array}{ll}
    0, & \hbox{\text{if $v(\mu_1)<v(\lb_2)$;}} \\
    \{\psi_{r,s}\}\simeq \Z/p^2\Z, & \hbox{\text{if $v(\mu_2)\le v(\mu_1)$, $v(\lb_2)\le v(\lb_1)$}}\\
    &  \text{and $a_1\equiv \frac{j_1}{j_2}\frac{\mu_1}{\mu_2}a_2\mod{\lb_2}$;}\\
\{\psi_{0,s}\}\simeq \Z/p\Z, & \hbox{\text{otherwise}}.
\end{array}%
\right.
$$

\end{prop}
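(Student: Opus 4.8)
The plan is to exploit the injection $r_K$ established in the discussion preceding the statement, which realises $\Hom_{S-gr}(\scE_1,\scE_2)$ as a subgroup of $\Hom_{K-gr}(\clE_{j_1,K},\clE_{j_2,K})\simeq\Z/p^2\Z$, in which $\{\psi_{0,s}\}$ is the unique subgroup of order $p$ and every $S$-homomorphism has the shape \eqref{eq:def f} for some $(r,s)$, the model maps being those with $r\neq 0$. Thus $\Hom_{S-gr}(\scE_1,\scE_2)$ is $0$, $\Z/p\Z$ or $\Z/p^2\Z$; it contains $\{\psi_{0,s}\}$ exactly when $\psi_{0,1}$ is integral, and it equals all of $\Z/p^2\Z$ exactly when some $\psi_{r,s}$ with $r\neq 0$ is integral (such a $\psi_{r,s}$ maps to a generator of $\Z/p^2\Z$, forcing the full group). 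I would therefore settle these two integrality questions separately.

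First the subgroup $\{\psi_{0,s}\}$. Setting $r=0$ in \eqref{eq:def f} and using $F_2(0)=1$, the map $\psi_{0,s}$ factors as $\scE_1\to G_{\mu_1,1}\xrightarrow{\sigma_s}G_{\lb_2,1}\hookrightarrow\scE_2$, so it is integral iff $\sigma_s\in\Hom_{S-gr}(G_{\mu_1,1},G_{\lb_2,1})$, which by \eqref{eq:Hom(glbn,glmn)} holds iff $v(\mu_1)\ge v(\lb_2)$. If $v(\mu_1)<v(\lb_2)$ then no nonzero $\psi_{0,s}$ is integral, so $\Hom_{S-gr}(\scE_1,\scE_2)$ meets the unique subgroup of order $p$ trivially; since every nonzero subgroup of $\Z/p^2\Z$ contains that subgroup, $\Hom_{S-gr}(\scE_1,\scE_2)=0$, the first case. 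From now on assume $v(\mu_1)\ge v(\lb_2)$, so that $\{\psi_{0,s}\}\simeq\Z/p\Z$ already sits inside $\Hom$.

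Next the model maps. By \eqref{eq:commutative diagram} a model map $\psi_{r,s}$ (with $r\neq 0$) is a morphism of extensions inducing $\psi_1=\sigma_r\colon G_{\lb_1,1}\to G_{\lb_2,1}$ and $\psi_2=\sigma_{rj_1/j_2}\colon G_{\mu_1,1}\to G_{\mu_2,1}$; by \eqref{eq:Hom(glbn,glmn)} these demand $v(\lb_1)\ge v(\lb_2)$ and $v(\mu_1)\ge v(\mu_2)$, which are therefore necessary. Granting them, I would read off integrality directly from \eqref{eq:def f}: the $T_1$-entry is just $\psi_2$, hence integral, and on expanding $(F_1(T_1)+\lb_1T_2)^r$ in the $T_2$-entry every monomial containing $T_2^k$ with $k\ge 1$ carries a factor $\lb_1^k/\lb_2\in R$ and is integral. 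The sole obstruction is thus the pure-$T_1$ term, i.e.\ the requirement \[\frac{F_1(T_1)^r(1+\mu_1T_1)^s-F_2(\psi_2^{*}T_1)}{\lb_2}\in R[G_{\mu_1,1}],\] equivalently the equality $F_1^r(1+\mu_1T_1)^s=\psi_2^{*}F_2$ in $\Hom_{S_{\lb_2}-gr}(G_{\mu_1,1},\mathbb{G}_{m,S_{\lb_2}})$.

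To evaluate this I would pass to the Sekiguchi--Suwa description, identifying $\Hom_{S_{\lb_2}-gr}(G_{\mu_1,1},\mathbb{G}_{m,S_{\lb_2}})\simeq(R/\lb_2R)^{\fr-\mu_1^{p-1}}$ by $a\mapsto E_p(a,\mu_1;T)$ (by \ref{teo:ss hom} and the explicit description of $\Hom$). Since this is a group isomorphism and $1+\mu_1T=E_p(\mu_1,\mu_1;T)$, the left side has parameter $ra_1+s\mu_1$, while the right side $\psi_2^{*}F_2=\sigma_{rj_1/j_2}^{*}E_p(a_2,\mu_2;T)$ has parameter $\tfrac{rj_1\mu_1}{j_2\mu_2}a_2$ by functoriality of the deformed Artin--Hasse exponential under $\sigma_{rj_1/j_2}$. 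The condition becomes $ra_1+s\mu_1\equiv\tfrac{rj_1\mu_1}{j_2\mu_2}a_2\ (\mathrm{mod}\ \lb_2)$; as $v(\mu_1)\ge v(\lb_2)$ gives $\mu_1\equiv 0\ (\mathrm{mod}\ \lb_2)$, the $s\mu_1$ term disappears and, dividing by the unit $r$, this reduces to $a_1\equiv\tfrac{j_1\mu_1}{j_2\mu_2}a_2\ (\mathrm{mod}\ \lb_2)$, independent of $(r,s)$ and compatible with the quotient by $\langle(\mu_1,0)\rangle$ defining $\Phi_{\mu_1,\lb_1}$. Hence, under $v(\mu_1)\ge v(\lb_2)$, a model map exists iff $v(\lb_1)\ge v(\lb_2)$, $v(\mu_1)\ge v(\mu_2)$ and $a_1\equiv\tfrac{j_1\mu_1}{j_2\mu_2}a_2\ (\mathrm{mod}\ \lb_2)$ --- precisely the second case, the inequality $v(\mu_1)\ge v(\lb_2)$ being automatic there via $v(\mu_1)\ge v(\mu_2)\ge v(\lb_2)$ from \ref{lem:v(mu)>= v(lb)} --- giving $\Hom=\Z/p^2\Z$; in every remaining subcase no model map exists and $\Hom=\{\psi_{0,s}\}\simeq\Z/p\Z$, which together with the first case yields the asserted trichotomy. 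I expect the genuine work to be the functoriality computation $\sigma_{rj_1/j_2}^{*}E_p(a_2,\mu_2;T)=E_p(\tfrac{rj_1\mu_1}{j_2\mu_2}a_2,\mu_1;T)$: in equal characteristic $T^p=0$ in $R[G_{\mu_1,1}]$ and the claim follows at once from the linear coefficient, whereas in unequal characteristic one must control the reduction $T^p\equiv-\tfrac{p}{\mu_1^{p-1}}(T+\cdots)$ modulo $\lb_2$, and this is the main obstacle.
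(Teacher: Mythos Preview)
Your proof is correct and follows essentially the same route as the paper. Both arguments reduce to the identity
\[
F_1(T_1)^r(1+\mu_1 T_1)^s=F_2\Big(\tfrac{(1+\mu_1 T_1)^{rj_1/j_2}-1}{\mu_2}\Big)
\quad\text{in}\quad
\Hom_{S_{\lb_2}-gr}(G_{\mu_1,1},\mathbb{G}_{m,S_{\lb_2}}),
\]
drop the factor $(1+\mu_1 T_1)^s$ using $v(\mu_1)\ge v(\mu_2)\ge v(\lb_2)$, and translate into the congruence on $a_1,a_2$.

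The one difference is how the right-hand side is computed. You pass immediately to the additive parameter via Proposition~\ref{lem:suriettivit� mappa tra hom}(ii) and then need the functoriality statement $\sigma_{k}^{*}E_p(a_2,\mu_2;T)=E_p(\tfrac{k\mu_1}{\mu_2}a_2,\mu_1;T)$, which you flag as ``the main obstacle''. The paper avoids this entirely: it stays on the multiplicative side and observes that, writing $[\tfrac{\mu_1}{\mu_2}]^{*}F(T)=F(\tfrac{\mu_1}{\mu_2}T)$ for the pullback along $G_{\mu_1,1}\to G_{\mu_2,1}$, one has
\[
F_2\Big(\tfrac{(1+\mu_1 T_1)^{k}-1}{\mu_2}\Big)
=[\tfrac{\mu_1}{\mu_2}]^{*}\!\big(F_2(\sigma_k(T_1))\big)
=[\tfrac{\mu_1}{\mu_2}]^{*}F_2(T_1)^{k}
=F_2(\tfrac{\mu_1}{\mu_2}T_1)^{k},
\]
using only that $F_2$ is a group homomorphism and $\sigma_k$ is the $k$-th power in $G_{\mu_1,1}$. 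It then raises to the $t$-th power ($tr\equiv 1\bmod p$) and reads off $a_1\equiv\tfrac{j_1}{j_2}\tfrac{\mu_1}{\mu_2}a_2\bmod\lb_2$ by comparing linear coefficients. Your ``obstacle'' dissolves the same way: both sides of your functoriality formula lie in $\Hom_{S_{\lb_2}-gr}(G_{\mu_1,1},\mathbb{G}_{m,S_{\lb_2}})$, and by Proposition~\ref{lem:suriettivit� mappa tra hom}(ii) they are equal iff their linear coefficients agree, which they visibly do. No separate analysis of $T^p$ in unequal characteristic is needed.
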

\begin{proof}
It is immediate to see that $\psi_{0,s}\in
\Hom_{S-gr}(\scE_1,\scE_2)$, with $s\neq 0$,  if and only if
$v(\mu_1)\ge v(\lb_2)$. We now see conditions for the existence of
$\psi_{r,s}$ with $r\neq 0$.
 If it exists, in particular, we have two  morphisms
 $G_{\mu_1,1}\too G_{\mu_2,1}$ and $G_{\lb_1,1}\too G_{\lb_2,1}$.
This implies $v(\mu_1)\ge v(\mu_2)$ and $v(\lb_1)\ge v(\lb_2)$.
Moreover we have that
$$
F_1(T_1)^r(1+\mu_1 T_1)^s=F_2\bigg(\frac{(1+\mu_1
T_1)^{\frac{rj_1}{j_2}}-1}{{\mu_2}}\bigg)\in
\Hom_{S_{\lb_{2}}-gr}({G_{\mu_1,1}},{\gmSl}).
$$
Since $v(\mu_1)\ge v(\mu_2)\ge v(\lb_2)$,  we  have
\begin{equation*}
F_1(T_1)^r=F_2\bigg(\frac{(1+\mu_1
T_1)^{\frac{rj_1}{j_2}}-1}{{\mu_2}}\bigg)\in
\Hom_{S_{\lb_{2}}-gr}({G_{\mu_1,1}},{\mathbb{G}_{m,S_{\lb_2}}}).
\end{equation*}
If we define the morphism of groups
\begin{align*}
[\frac{\mu_1}{\mu_2}]^*:\Hom_{S_{\lb_{2}}-gr}({G_{\mu_2,1}},\mathbb{G}_{m,S_{\lb_{2}}})&\too
\Hom_{S_{\lb_{2}}-gr}({G_{\mu_1,1}},\mathbb{G}_{m,S_{\lb_2}})\\
F(T_1)&\longmapsto F(\frac{\mu_1}{\mu_2}T_1)
\end{align*}
then
\begin{align*}
F_2\bigg(\frac{(1+\mu_1
T_1)^{\frac{rj_1}{j_2}}-1}{\mu_2}\bigg)&=[\frac{\mu_1}{\mu_2}]^*\bigg(F_2\bigg(\frac{(1+\mu_1
T_1)^{\frac{rj_1}{j_2}}-1}{\mu_1}\bigg)\bigg)\\&=[\frac{\mu_1}{\mu_2}]^*(F_2(T_1))^{\frac{rj_1}{j_2}}\\
&=F_2(\frac{\mu_1}{\mu_2}(T_1))^{\frac{rj_1}{j_2}}.
\end{align*}
Therefore we have
\begin{equation*}
F_1(T_1)^r=(F_2(\frac{\mu_1}{\mu_2}T_1))^{\frac{rj_1}{j_2}}\in
\Hom_{S_{\lb_{2}}-gr}({G_{\mu_1,1}},{\mathbb{G}_{m,S_{\lb_2}}}).
\end{equation*}
Any element of $\Hom_{{S_\lb}-gr}({G_{\mu_1,1}},{\gmSl}) $ has
order $p$. Let $t$ be an inverse for $r$  modulo $p$. Then raising
the equality to the $t$-{th} power we obtain
$$
F_1(T_1)=(F_2(\frac{\mu_1}{\mu_2}T_1))^{\frac{j_1}{j_2}}\in\Hom_{S_{\lb_2}-gr}({G_{\mu_1,1}},{\mathbb{G}_{m,S_{\lb_2}}}).
$$
This means
$$
a_1\equiv {\frac{j_1}{j_2}}\frac{\mu_1}{\mu_2}a_2\mod \lb_2.
$$

  Conversely it is  clear
that,  if  $v(\mu_1)\ge v(\mu_2)$, $v(\lb_{1})\ge v({\lb_{2}})$
and
$$
F_1(T_1)=(F_2(\frac{\mu_1}{\mu_2}T_1))^{\frac{j_1}{j_2}}\in
\Hom_{S_{\lb_{2}}-gr}({G_{\mu_1,1}},{\gmSl}),
$$
then \eqref{eq:def f} defines a morphism of group schemes. 

 \end{proof}
We have the following result which gives a criterion to determine
the class of isomorphism, as a group scheme, of an extension of
type $\clE^{(\mu,\lb;F,j)}.$
\begin{cor}\label{cor:iso tra modelli}
For $i=1,2$, let $F_i(T)=\sum_{k=0}^{p-1}\frac{a_i^k}{k!}T^k$ and
let $\scE_i=\clE^{(\mu_i,\lb_{i};\tilde{F}_i,j_i)}$ be models of
$\mu_{p^2,K}$, with $\tilde{F}_i$ liftings of $F_i$. Then they are
isomorphic if and only if $v(\mu_1)=v(\mu_2)$, $v(\lb_1)=v(\lb_2)$
and $a_1\equiv
    \frac{j_1}{j_2}\frac{\mu_1}{\mu_2}a_2\mod{\lb_2}$. Moreover if
    it happens then any model map between them is an isomorphism.
\end{cor}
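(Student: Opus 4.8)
The plan is to deduce everything from the preceding proposition, which computes $\Hom_{S-gr}(\scE_1,\scE_2)$ explicitly, together with the observation (made just before that proposition) that the morphisms $\psi_{r,s}$ of \eqref{eq:def f} are model maps precisely when $r\neq 0$. Since $\scE_1$ and $\scE_2$ are models of $\mu_{p^2,K}$ we have $j_1,j_2\neq 0$, so $r\neq 0$ also forces $rj_1/j_2\neq 0$ in $\Z/p\Z$; this will be needed to know that the induced map on the $G_{\mu_i,1}$-quotients is nonzero.

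First I would treat the ``only if'' direction. An isomorphism $\scE_1\too\scE_2$ is in particular a model map, hence of the form $\psi_{r,s}$ with $r\neq 0$. By the preceding proposition a morphism with $r\neq 0$ can occur only in the second case of the trichotomy, which already yields $v(\mu_2)\le v(\mu_1)$, $v(\lb_2)\le v(\lb_1)$ and the congruence $a_1\equiv \frac{j_1}{j_2}\frac{\mu_1}{\mu_2}a_2 \mod \lb_2$. Applying the same reasoning to the inverse isomorphism $\scE_2\too\scE_1$, which is again a model map with $r'\neq 0$, puts us in the second case with the roles of the indices exchanged and so gives the reverse inequalities $v(\mu_1)\le v(\mu_2)$ and $v(\lb_1)\le v(\lb_2)$. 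Combining, $v(\mu_1)=v(\mu_2)$ and $v(\lb_1)=v(\lb_2)$, together with the congruence, as claimed.

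For the converse, and simultaneously for the final sentence, suppose the three conditions hold and let $\psi_{r,s}$ be any model map $\scE_1\too\scE_2$; such a map exists by the proposition, since the hypotheses place us in its second case where $\Hom_{S-gr}(\scE_1,\scE_2)\simeq \Z/p^2\Z$. I would show $\psi_{r,s}$ is an isomorphism by inserting it into the commutative diagram \eqref{eq:commutative diagram}, whose outer vertical arrows are $\psi_1\colon G_{\lb_1,1}\too G_{\lb_2,1}$, $T\mapsto \frac{(1+\lb_1 T)^r-1}{\lb_2}$, and $\psi_2\colon G_{\mu_1,1}\too G_{\mu_2,1}$, $T\mapsto \frac{(1+\mu_1 T)^{rj_1/j_2}-1}{\mu_2}$. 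These are the nonzero homomorphisms $\sigma_r$ and $\sigma_{rj_1/j_2}$ of \eqref{eq:Hom(glbn,glmn)}; since $v(\lb_1)=v(\lb_2)$ and $v(\mu_1)=v(\mu_2)$, and since $r,rj_1/j_2\neq 0$, they are isomorphisms, as $G_{\lb,1}\simeq G_{\lb',1}$ exactly when $v(\lb)=v(\lb')$, the isomorphism being realized by such a $\sigma$. The five lemma applied to the short exact rows of \eqref{eq:commutative diagram} then forces the middle arrow $\psi_{r,s}$ to be an isomorphism of group schemes, which proves both the ``if'' direction and the assertion that every model map between them is an isomorphism.

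The main obstacle, and really the only non-formal point, is this last step: a model map between finite flat $R$-group schemes of the same order need not be an isomorphism in general (for instance $G_{\lb,1}\too\mu_{p,S}$ with $v(\lb)>0$), so one genuinely needs the valuation equalities to promote the side maps $\psi_1,\psi_2$ to isomorphisms before the five lemma can be invoked. All the remaining content is bookkeeping with \eqref{eq:Hom(glbn,glmn)} and the proposition's trichotomy, ensuring in particular that $r\neq 0$ entails $rj_1/j_2\neq 0$ so that $\psi_2$ is indeed nonzero.
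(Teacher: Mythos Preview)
Your proposal is correct and follows essentially the same approach as the paper: use the preceding proposition to characterize when model maps exist, and use the commutative diagram \eqref{eq:commutative diagram} together with the five lemma to decide when such a map is an isomorphism. The only cosmetic difference is in the ``only if'' direction: you obtain the reverse valuation inequalities by applying the proposition to the inverse isomorphism, whereas the paper argues directly from the diagram that if the middle arrow $\psi_{r,s}$ is an isomorphism then so are $\psi_1$ and $\psi_2$, forcing $v(\mu_1)=v(\mu_2)$ and $v(\lb_1)=v(\lb_2)$.
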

\begin{rem}The last sentence is in fact true more in general: any
model map between isomorphic finite and flat commutative $R$-group
schemes is in fact an isomorphism (see \cite[Cor. 3]{far}).
\end{rem}
\begin{proof}
By the proposition we have that  a model map
$\psi_{r,s}:\clE^{(\mu_1,\lb_1,F_1,j_1)}\too
\clE^{(\mu_2,\lb_2,F_2,j_2)}$  exists if and only if $v(\mu_1)\ge
v(\mu_2)$, $v(\lb_1)\ge v(\lb_2)$ and $a_1\equiv
\frac{j_1}{j_2}\frac{\mu_1}{\mu_2}a_2\mod{\lb_2}$. It is a
morphism of extensions as remarked before the proposition.
    Let us consider the commutative diagram \eqref{eq:commutative diagram}. Then $\psi_{r,s}$ is an isomorphism if and only if $\psi_i$ is an isomorphism for $i=1,2$.
   This is equivalent to requiring $v(\mu_1)=v(\mu_2)$ and $v(\lb_{1})=v(\lb_{2})$.
   This also proves the last
 assertion. 
\end{proof}

We conclude the section with the complete classification of
$\mu_{p^2,K}$-models. The following theorem summarizes the above
results.

 \begin{thm}\label{cor:modelli di Z/p^2 Z sono cosi'2}
 Let $G$ be a finite and flat $R$-group
scheme such that $G_K\simeq \mu_{p^2,K}$. Then $G\simeq
\clE^{(\pi^{m},\pi^{n};\tilde{F},1)}$ for some $m,n\ge 0$,
$\tilde{F}(T)$ a lifting of
$F(T)=\sum_{k=0}^{p-1}\frac{a^k}{k!}T^k$ with $(a,1)\in
\Phi_{\pi^m,\pi^n}$. If $char(R)=0$ then $m\ge n$ and $(p-1)m\le
v(p)$, while if $char(R)=p$ then $m\ge pn$. Moreover $m,n$ and
$a\in R/\pi^{n} R$ are unique.
\end{thm}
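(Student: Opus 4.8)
The plan is to assemble the theorem from the structural results already proved, reading it as a normalization statement followed by a uniqueness statement. First I would invoke \ref{lem:modelli di Z/p^2 Z sono estensioni} to present $G$ as an extension of $G_{\mu,1}$ by $G_{\lb,1}$ for some $\mu,\lb\in R\setminus\{0\}$, and \ref{lem:v(mu)>= v(lb)} to get $v(\mu)\ge v(\lb)$. Setting $m=v(\mu)$ and $n=v(\lb)$, the isomorphisms $G_{\mu,1}\simeq G_{\pi^m,1}$ and $G_{\lb,1}\simeq G_{\pi^n,1}$ coming from \eqref{eq:Hom(glbn,glmn)} let me rewrite this as an extension of $G_{\pi^m,1}$ by $G_{\pi^n,1}$ without changing $G$ as a group scheme, so I may assume $\mu=\pi^m$, $\lb=\pi^n$. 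Since $G_K\simeq\mu_{p^2,K}$, its extension class over $K$ equals $[\clE_{j,K}]$ for a unique $j\in\Z/p\Z$ (the classes $[\clE_{j,K}]$ exhaust $\Ext^1_K(\mu_{p,K},\mu_{p,K})$), and $j\neq 0$ precisely because $\clE_{j,K}\simeq\mu_{p^2,K}$ exactly when $j\neq 0$. Then \ref{cor:modelli di Z/p^2Z} places $[G]$ in $\{[\clE^{(\pi^m,\pi^n;F,i)}];(F,i)\in\rad\}$, and \ref{prop:rad_p} identifies the parameter with a pair $(a_0,j)\in\Phi_{\pi^m,\pi^n}$ for which $F=E_p(a_0,\pi^m;T)$.

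The second step is to normalize $j$ to $1$ and to simplify $F$. Using that $\Phi_{\pi^m,\pi^n}$ is an abelian group and $j$ is invertible modulo $p$, the element $(a,1):=j^{-1}(a_0,j)$ again lies in $\Phi_{\pi^m,\pi^n}$, with $a\equiv j^{-1}a_0\bmod\pi^n$. The isomorphism criterion \ref{cor:iso tra modelli}, applied with equal valuations and the congruence $a\equiv\frac{1}{j}\frac{\pi^m}{\pi^m}a_0$, then yields $\clE^{(\pi^m,\pi^n;E_p(a,\pi^m;T),1)}\simeq G$ as group schemes; both sides are genuine models of $\mu_{p^2,K}$ (the second has generic fibre $\clE_{1,K}$), so the criterion applies. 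Since $m\ge n$ forces $\pi^m\equiv 0\bmod\pi^n$, the explicit expansion $E_p(a,\pi^m;T)=1+\sum_{i=1}^{p-1}\frac{\prod_{k=0}^{i-1}(a-k\pi^m)}{i!}T^i$ collapses modulo $\pi^n$ to $\sum_{k=0}^{p-1}\frac{a^k}{k!}T^k$, producing the stated $F$. The characteristic-dependent inequalities then come for free: $(p-1)m\le v(p)$ is the finiteness condition that makes $G_{\pi^m,1}$ exist (our standing convention for $G_{\mu,1}$), $m\ge n$ is \ref{lem:v(mu)>= v(lb)}, and in characteristic $p$ the fact that $G$ is a model makes $p_2$ surjective by \ref{lem:modelli esistono ss p_2 surj}, whence $m\ge pn$ by \ref{cor:p2 surjective}(iv).

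For uniqueness I would suppose $G\simeq\clE^{(\pi^{m_1},\pi^{n_1};\tilde F_1,1)}\simeq\clE^{(\pi^{m_2},\pi^{n_2};\tilde F_2,1)}$ with $F_i=\sum_k\frac{a_i^k}{k!}T^k$, and apply \ref{cor:iso tra modelli} to the resulting isomorphism of the two models: it gives $v(\pi^{m_1})=v(\pi^{m_2})$ and $v(\pi^{n_1})=v(\pi^{n_2})$, i.e. $m_1=m_2=m$ and $n_1=n_2=n$, together with $a_1\equiv\frac{1}{1}\frac{\pi^m}{\pi^m}a_2=a_2\bmod\pi^n$. Thus $a_1=a_2$ in $R/\pi^n R$, and there is no residual indeterminacy from the quotient by $\langle(\pi^m,0)\rangle$ in the definition of $\Phi_{\pi^m,\pi^n}$, precisely because $m\ge n$ makes $\pi^m$ vanish in $R/\pi^n R$; hence $a$ is genuinely well defined as an element of $R/\pi^n R$.

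The routine parts are the two invocations of \ref{cor:iso tra modelli} and the collapse of $E_p$ modulo $\pi^n$; the step needing the most care is keeping the parameter $a$ coherent across the two normalizations $j\mapsto 1$ and $\mu\mapsto\pi^m$, since both act nontrivially on $a$ through the group law of $\Phi_{\pi^m,\pi^n}$ and the factor $\frac{j_1}{j_2}\frac{\mu_1}{\mu_2}$ in the criterion, and then in recognizing that $m\ge n$ is exactly what kills the ambiguity and secures uniqueness of $a\in R/\pi^n R$. The boundary cases $v(\mu)=0$ or $v(\lb)=0$ (that is $m=0$ or $n=0$) are subsumed, since all the cited results hold for arbitrary $\mu,\lb\in R\setminus\{0\}$ and reduce there to the descriptions recorded in \ref{rem: clE sono lo Z/p}.
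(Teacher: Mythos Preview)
Your proof is correct and follows essentially the same strategy as the paper's: present $G$ as an extension via \ref{lem:modelli di Z/p^2 Z sono estensioni}, identify the extension class with an element of $\Phi_{\pi^m,\pi^n}$ via \ref{cor:modelli di Z/p^2Z} and \ref{prop:rad_p}, normalize $j$ to $1$, and conclude existence and uniqueness from \ref{cor:iso tra modelli}. The only cosmetic difference is that the paper sets $a:=b/j$ (division in $R/\pi^n R$) and checks directly that $(b/j,1)\in\Phi_{\pi^m,\pi^n}$ by manipulating the defining congruence, whereas you obtain $(a,1)$ as the integer multiple $k\cdot(a_0,j)$ with $kj\equiv 1\bmod p$ and rely on the subgroup property; these agree because $pa_0\equiv 0\bmod\pi^n$ (trivial in characteristic $p$, and in characteristic $0$ it follows from reducing the equation $p a_0 - j\pi^m\equiv\frac{p}{\pi^{m(p-1)}}a_0^p$ modulo $\pi^n$ using $m\ge n$ and $pv(\tilde a_0)\ge n$), a point you use implicitly when invoking \ref{cor:iso tra modelli}.
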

 \begin{proof} By \ref{lem:modelli di Z/p^2 Z sono estensioni}, \ref{cor:modelli
di Z/p^2Z} and      \ref{prop:rad_p} 
 any model of $\mu_{p^2,K}$ is of type $\clE^{(\pi^m,\pi^n;\tilde{G},j)}$ for some
 $m,n\ge 0$, and $\tilde{G}(T)$ a lifting of $G(T)=\sum_{k=0}^{p-1}\frac{b^k}{k!}T^k$ with $(b,j)\in\Phi_{\pi^m,\pi^n}$ and $j\neq 0$.
Moreover,  by \ref{lem:v(mu)>= v(lb)} and by definition of group
schemes $G_{\pi^m,1}$, if $char(R)=0$ then $m\ge n$ and $(p-1)m\le
v(p)$. While if $char(R)=p$, by \ref{lem:modelli esistono ss p_2
surj} and \ref{cor:p2 surjective}(iv),  then $m\ge pn$.

Now  we  prove $(\frac{b}{j},1)\in \Phi_{\pi^m,\pi^n}$. If
$char(R)=p$ this is trivial. We now assume $char(R)=0$.
Since $(b,j)\in \Phi_{\pi^m,\pi^n}$ 
then  $b\in (R/\lb R)^{{\fr}}$ and
\begin{equation}\label{eq:equazione per bbf a}
pb-j\pi^m=\frac{p}{\pi^{m(p-1)}}b^p\mod\pi^{np}.
\end{equation}
Clearly $\frac{b}{j}\in (R/\lb R)^{\fr}$. Moreover, multiplying
\eqref{eq:equazione per bbf a} by $\frac{1}{j}$, we  have
$$
p\frac{b }{j}-\pi^m\equiv \frac{p}{\pi^{m(p-1)}}\bigg(\frac{b}{
j}\bigg)^p\mod {\pi^{np}}.
$$

Let $a:=\frac{b}{j}$, $F(T)=\sum_{k=0}^{p-1}\frac{a^k}{k!}T^k$ and
$\tilde{F}(T)$ a lifting of $F(T)$. Then by \ref{cor:iso tra
modelli} we can conclude that
$$
\clE^{(\pi^m,\pi^n;\tilde{G},j
)}\simeq\clE^{({\pi^m},\pi^n;\tilde{F},1)}.
$$
Moreover again by \ref{cor:iso tra modelli}, it follows that
$(a_i,1)\in \Phi_{m_i,n_i}$, for $i=1,2$, correspond to two
isomorphic models  of  $\mu_{p^2,K}$
if and only if $m_1=m_2$, $n_1=n_2$ and $a_1=a_2\in R/\pi^{n_1} R$.  
\end{proof}

\section{Reduction on the special fiber}
In the following we study the special fibers of the extension
classes of type $[\clE^{(\mu,\lb;F,j)}]$ with $(F,j)\in \rad$. In
this section, if $char(R)=0$ we suppose  that $R$ contains a
distinguished primitive $p^2$-th root of unity $\zeta_2$.
We remark that the special fibers could be in one of the following
$\Ext^1_k$
\begin{align*}
\Ext^1_k(\mu_{p,k},\mu_{p,k}) \qquad& \Ext^1_k(\alpha_{p,k},\mu_{p,k})\\
\Ext^1_k(\mu_{p,k},\alpha_{p,k}) \qquad &
\Ext^1_k(\alpha_{p,k},\alpha_{p,k})
\end{align*}
and moreover, if $char(R)=0$,
$$
\begin{array}{ccc}
\Ext^1_k(\mu_{p,k},\Z/p\Z)  & \Ext^1_k(\Z/p\Z,\mu_{p,k})& \Ext^1_k(\Z/p\Z,\Z/p\Z)\\
\Ext^1_k(\Z/p\Z,\alpha_{p,k}) & \Ext^1_k(\alpha_{p,k},\Z/p\Z).\\
\end{array}
$$
We recall that we consider only commutative extension.
 We study separately the different
cases which can occur.


%
%
\subsection{Case $\mathbf{v(\mu)=v(\lb)=0}$}
By \ref{eq:ext1(mup,mup)}(i), \ref{rem:Ext(mup,mup) ogni base} and
\ref{rem: clE sono lo Z/p} we have
$$
\xymatrix{0\ar[r] & \{ [\clE_{j,S}]; j\in \Z/p\Z\}\ar[d]\ar[r]&
\Ext^1_S(\mu_{p,S},\mu_{p,S})\ar[r]\ar[d]&H^1(S,\Z/p\Z)\ar[r]\ar[d]&0\\
 0\ar[r] & \{[\clE_{j,k}]; j\in \Z/p\Z\}\ar[r]&
\Ext^1_k(\mu_{p,k}, \mu_{p,k})\ar[r]&H^1(k,\Z/p\Z)\ar[r]&0}
$$
where the vertical maps are the restriction maps. Clearly the
first vertical map is an isomorphism.

%
\subsection{Case $\mathbf{ v(\mu)>v(\lb)=0}$}
In such a case we have $$\{[\clE^{(\mu,\lb,;F,j)}]; (F,j)\in
\rad\}=\{[\clE^{(\mu,\lb;1,j)}];j\in \Z/p\Z\}.$$
 It is immediate to see  that any  extension $[\clE^{(\mu,\lb;1,j)}]$ is trivial
on the special fiber.
\subsection{Case $\mathbf{ v(\lb)>v(\mu)=0}$}
In this case  $\{[\clE^{(\mu,\lb,;F,j)}]; (F,j)\in \rad\}$ is
trivial (see \ref{rem: rad=0}).

\subsection{Case $\mathbf{v(\mu), v(\lb)>0}$ and, if char(R)=0, $\mathbf{ v(\mu),v(\lb)< v(\lb_{(1)})}$}

 Then ${(G_{\mu,1})}_k\simeq (G_{\lb,1})_k\simeq
{\ap}_{,k}$. First, we recall some results about extensions, over
 $k$, with quotient $\alpha_{p,k}$. See
\cite[II \S 3 n°4, III \S 6 n°7]{DG} for a reference.

\begin{thm}\label{teo:ext(ap,ap)}
The exact sequence $$ 0\too \alpha_{p,k}\to \Gak\on{{\fr}}\to
\Gak\to 0$$ induces the following split exact sequence
$$
0\too \Hom_{k-gr} (\alpha_{p,k},\Gak)\too \Ext^1_k(\apk,\apk)\too
\Ext^1_k(\apk,\Gak)\too 0.
$$
\end{thm}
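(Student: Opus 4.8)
The plan is to apply the derived functor $\Ext^\bullet_k(\apk,-)$ to the given short exact sequence and to show that the Frobenius collapses the resulting long exact sequence into the asserted four-term sequence; the splitting will then follow from a linear-algebra observation, once one knows that all three terms are $k$-vector spaces and the maps are $k$-linear. First I would write down the long exact sequence obtained by applying $\Hom_{k-gr}(\apk,-)$ and its right derived functors to $0\too \apk\too \Gak\on{\fr}{\too} \Gak\too 0$:
\begin{align*}
0\too \Hom_{k-gr}(\apk,\apk)\too \Hom_{k-gr}(\apk,\Gak)&\on{\fr_*}{\too} \Hom_{k-gr}(\apk,\Gak)\on{\partial}{\too} \Ext^1_k(\apk,\apk)\\
&\on{i_*}{\too} \Ext^1_k(\apk,\Gak)\on{\fr_*}{\too} \Ext^1_k(\apk,\Gak),
\end{align*}
where $i_*$ is induced by the inclusion $\apk\too\Gak$ and $\fr_*$ by postcomposition with the Frobenius $\fr\colon\Gak\too\Gak$.

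The crucial point is that $\fr_*$ vanishes both on $\Hom_{k-gr}(\apk,\Gak)$ and on $\Ext^1_k(\apk,\Gak)$. Since $\apk=\ker\fr$, the relative Frobenius $\fr_{\apk}$ is the zero homomorphism. For a homomorphism $\phi\colon\apk\too\Gak$, functoriality of the Frobenius gives $\fr\circ\phi=\phi^{(p)}\circ\fr_{\apk}=0$, so $\fr_*=0$ on $\Hom$. For an extension $0\too\Gak\too E\too\apk\too 0$ the relative Frobenius of $E$ furnishes a morphism of extensions from $E$ to $E^{(p)}$ covering $\fr$ on the subobject and $\fr_{\apk}=0$ on the quotient; the standard compatibility of pushout and pullback in a morphism of extensions then identifies the pushout of $[E]$ along $\fr$ with the pullback of $[E^{(p)}]$ along $\fr_{\apk}=0$, which is the split extension. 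Hence $\fr_*=0$ on $\Ext^1_k(\apk,\Gak)$ as well. This is the step I expect to be the main obstacle, and the relative-Frobenius argument is what makes it transparent.

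With $\fr_*=0$ on both the Hom and the Ext terms, exactness forces $\partial$ to be injective (its kernel is $\operatorname{im}\fr_*=0$) and $i_*$ to be surjective (its image is $\ker\fr_*$, the whole group), so the long exact sequence reduces to
$$
0\too \Hom_{k-gr}(\apk,\Gak)\on{\partial}{\too} \Ext^1_k(\apk,\apk)\on{i_*}{\too} \Ext^1_k(\apk,\Gak)\too 0 .
$$
Finally, to obtain the splitting I would use that $\Hom_{k-gr}(\apk,\apk)\simeq k$, so that each of the three terms is a module over $\Hom_{k-gr}(\apk,\apk)=k$ through precomposition in the first variable. As $\partial$ and $i_*$ are natural in the first argument, they commute with this action and are therefore $k$-linear. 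A short exact sequence of $k$-vector spaces always splits, which yields the split exact sequence of the statement.
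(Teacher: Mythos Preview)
Your argument is correct. The paper does not actually prove this statement; it is recalled as a known result with a citation to Demazure--Gabriel [II \S 3 n\textsuperscript{o}4, III \S 6 n\textsuperscript{o}7], so there is no proof in the text to compare against. Your approach---collapsing the long exact sequence via the vanishing of $\fr_*$ forced by $\fr_{\apk}=0$, and then invoking the $\End_{k-gr}(\apk)\simeq k$ action to get a short exact sequence of $k$-vector spaces---is precisely the standard way this is established.
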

It is also known that
 $$
\Ext^1_k(\Gak,\Gak)\simeq H_s^2(\Gak,\Gak)\too
H_0^2(\apk,\Gak)\simeq \Ext^1_k(\apk,\Gak).
 $$
 is surjective, where $H_s^2(*,*)$ denotes the Hochschild group of simmetric cocycles.
 Since $ H_s^2(\Gak,\Gak)$ is
 freely generated as a right $k$-module by $2$-cocycles
 $C_i=\frac{U^{p^i}+V^{p^i}-(U+V)^{p^i}}{p}$,  for all $i\in \N\setminus\{0\}$, it
 follows that
 $H_0^2(\apk,\Gak)$ is freely generated as
right $k$-module by the class of the  cocycle
$C_1=\frac{U^p+V^p-(U+V)^p}{p}$. So $\Ext^1_k(\apk,\Gak)\simeq k$.

 Moreover it is easy to see that $\Hom_k (\alpha_{p,k},\Gak)\simeq k$.
 The morphisms are given by $T\mapsto a T$ with $a\in k$.
By these remarks we have that the isomorphism
$$
\Hom_k (\alpha_{p,k},\Gak)\times \Ext^1_k(\apk,\Gak)\too
\Ext^1_k(\apk,\apk),
$$
deduced from \ref{teo:ext(ap,ap)}, is given by
$$
(\beta,\gamma C_1)\mapsto [E_{\beta,\gamma}].
$$
The group scheme $E_{\beta,\gamma}$ is so defined:
$$
E_{\beta,\gamma}:=\Sp(k[T_1,T_2]/(T_1^p,T_2^p-\beta T_1))
$$

\begin{enumerate}
    \item  comultiplication
    \begin{align*}
    T_1\longmapsto &T_1\pt 1+1\pt T_1\\
    T_2\longmapsto &T_2\pt 1 + 1\pt T_2 + \gamma \frac{T_1^p\pt 1+ 1\pt T_1^p-(T_1\pt 1+1\pt T_1)^p}{p}
    \end{align*}
    \item counit
    \begin{align*}
    &T_1\longmapsto 0\\
    &T_2\longmapsto 0
    \end{align*}
    \item coinverse
    \begin{align*}
    &T_1\longmapsto -T_1\\
    &T_2\longmapsto -T_2
\end{align*}
or
 \begin{align*}
    &T_1\longmapsto -T_1\\
    &T_2\longmapsto -\gamma T_1^2-T_2
\end{align*}
if $p=2$.

\end{enumerate}
 In
\cite[4.3.1]{SS4} the following result  was proved.
\begin{prop}Let $\mu,\lb\in \pi R\setminus\{0\}$. Then $[\clE^{(\mu,\lb;E_{p}(\bbf{a},\mu,T))}_k]\in H_0^2({\Ga}_{,k},{\Gak})$
coincides with the class of
$$
\sum_{k=1}^{\infty}\frac{(\tilde{\fr}_{k-1})(\widetilde{\bbf{a}})}{\lb}
C_k,
$$
where $\fr-[\mu^{p-1}]=(\tilde{\fr}_0, \tilde{\fr}_1,\dots,
\tilde{\fr}_k,\dots)$ and  $\widetilde{\bbf{a}}\in \widehat{W}(R)$
is a lifting of $\bbf{a}\in \widehat{W}(R/\lb R)$.
\end{prop}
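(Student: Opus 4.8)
The plan is to read the extension class off the comultiplication of $\clE^{(\mu,\lb;\widetilde{F})}$ restricted to the special fibre, and then to match the resulting symmetric $2$-cocycle with the stated expression by exploiting the homomorphism property of the deformed Artin-Hasse series $E_p(\bbf{a},\mu;T)$.

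First I would fix coordinates over $k$. Since $\mu,\lb\in\pi R\setminus\{0\}$ we have $\mu\equiv\lb\equiv 0\bmod\pi$, and because $\bbf{a}\in\widehat{W}(R/\lb R)^{\fr-[\mu^{p-1}]}$ has nilpotent entries while $\mu$ is topologically nilpotent, $\widetilde{F}(T)=E_p(\widetilde{\bbf{a}},\mu;T)\equiv 1\bmod\pi$. Hence over $k$ the units $1/(1+\mu T_1)$ and $1/(\widetilde{F}(T_1)+\lb T_2)$ become trivial, so $\clE^{(\mu,\lb;\widetilde{F})}_k=\Sp(k[T_1,T_2])$ with $T_1$ pulled back from the quotient $(\gmu)_k\simeq\Gak$ and $T_2$ restricting to the coordinate of the sub $(\glb)_k\simeq\Gak$. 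Reducing the comultiplication of $T_2$ modulo $\pi$, the part $T_2\pt\widetilde{F}(T_1)+\widetilde{F}(T_1)\pt T_2+\lb T_2\pt T_2$ collapses to $T_2\pt 1+1\pt T_2$, so the class of $\clE^{(\mu,\lb;\widetilde{F})}_k$ in the Hochschild group $H^2_s(\Gak,\Gak)$ is represented by the symmetric $2$-cocycle
$$
f(U,V)=\frac{\widetilde{F}(U)\widetilde{F}(V)-\widetilde{F}(U+V+\mu UV)}{\lb}\bmod\pi,\qquad U=T_1\pt 1,\ V=1\pt T_1.
$$
This reduces the statement to a formal identity for $E_p(\widetilde{\bbf{a}},\mu;T)$.

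Next I would note that, by \ref{teo:ss hom}, over $R/\lb R$ the series $E_p(\bbf{a},\mu;T)$ lies in $\Hom_{S_\lb-gr}(\gmu,\gmSl)$, so $\widetilde{F}(U)\widetilde{F}(V)\equiv\widetilde{F}(U+V+\mu UV)\bmod\lb$; thus the numerator of $f$ is divisible by $\lb$ and $f$ is a genuine polynomial. The content is therefore to compute the $\lb$-linear part of the multiplicativity defect of $E_p(\widetilde{\bbf{a}},\mu;T)$, which I would do after taking logarithms and using the product expansion of the deformed Artin-Hasse series. In that expansion the factor that is a power of $(1+\mu T)$ contributes a term proportional to $\log(1+\mu T)$, which already linearises the group law $U+V+\mu UV$ — since $1+\mu(U+V+\mu UV)=(1+\mu U)(1+\mu V)$ — and so drops out of the defect. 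The higher factors, powers of $(1+\mu^{p^r}T^{p^r})$, do not linearise, and their defect, after dividing by $\lb$ and reducing modulo $\pi$, is a multiple of the primitive cocycle $C_r=\frac{U^{p^r}+V^{p^r}-(U+V)^{p^r}}{p}$, the division by $p$ reflecting the $\tfrac{1}{p^r}$ weighting of the exponents. Re-expressing the exponents in Witt coordinates identifies the total defect with the Witt vector $(\fr-[\mu^{p-1}])(\widetilde{\bbf{a}})=(\tilde{\fr}_0,\tilde{\fr}_1,\dots)$, giving
$$
f(U,V)\equiv\sum_{k\ge 1}\frac{\tilde{\fr}_{k-1}(\widetilde{\bbf{a}})}{\lb}\,C_k.
$$
Each $\tilde{\fr}_{k-1}(\widetilde{\bbf{a}})$ is divisible by $\lb$ because $\bbf{a}$ lies in the kernel of $\fr-[\mu^{p-1}]$ over $R/\lb R$, so the coefficients make sense in $R$ and are then reduced to $k$.

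Finally I would verify that the lower-degree terms produced by the logarithmic expansion are Hochschild coboundaries, hence vanish in $H^2_s(\Gak,\Gak)$, so that only the combination of the primitive cocycles $C_k$ survives; their freeness as a right $k$-module, recalled before the statement, then makes the coefficients unambiguous. The main obstacle is exactly this middle step: controlling the defect of $E_p(\widetilde{\bbf{a}},\mu;T)$ modulo $\pi$ and cleanly separating the genuine $C_k$-contributions from coboundaries requires the full apparatus of explicit formulas for the deformed Artin-Hasse exponential developed in \cite{SS4}, which is why in practice one quotes \cite[4.3.1]{SS4} once the reduction of the first paragraph has isolated the cocycle to be evaluated.
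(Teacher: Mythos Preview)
Your proposal is correct and, in the end, arrives at exactly the paper's proof: the paper does not prove this proposition at all but simply records that it was proved in \cite[4.3.1]{SS4}. Your sketch of the reduction to the symmetric $2$-cocycle $f(U,V)=\big(\widetilde{F}(U)\widetilde{F}(V)-\widetilde{F}(U+V+\mu UV)\big)/\lb\bmod\pi$ and the subsequent identification via the Artin-Hasse machinery is a faithful outline of how that reference actually proceeds, and you correctly flag that the substantive step is precisely the one you defer to \cite{SS4}.
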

We deduce the following corollary about the extensions of
$\alpha_{p,k}$ by $\Gak$.
\begin{cor}\label{cor:ext(ap,ga)}Let $\mu,\lb\in \pi R\setminus\{0\}$. If $char(R)=0$ we assume $v(\lb_{(1)})>v(\mu)$.
Then $[\widetilde{\clE}^{(\mu,\lb;E_{p}(\bbf{a},\mu,T))}_k]\in H_0^2({\ap}_{,k},{\Ga}_{,k})$
coincides with the class of
$$
\frac{(\fr-{\mu^{p-1})}(\widetilde{a}_0)}{\lb}C_1,
$$
where $\widetilde{\bbf{a}}=(\widetilde{a}_0,\widetilde{a}_1),
\dots, \widetilde{a}_i,\dots) \in \widehat{W}(R)$ is a lifting of
$\bbf{a}\in \widehat{W}(R/\lb R)$.
\end{cor}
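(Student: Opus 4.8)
The plan is to deduce the corollary from the preceding proposition (\cite[4.3.1]{SS4}) by pulling back the extension class along the canonical inclusion $i\colon G_{\mu,1}\hookrightarrow \gmu$ and then reading off the effect on the special fibre. First I would invoke the definition of $\delta$ given just before \ref{prop:ker alpha=clE}, namely $[\widetilde{\clE}^{(\mu,\lb;\widetilde{F})}]=i^*([\clE^{(\mu,\lb;\widetilde{F})}])$ with $\widetilde{F}$ a lifting of $E_p(\bbf{a},\mu;T)$. Since pullback of extensions commutes with base change, this identity descends to the special fibre, giving $[\widetilde{\clE}^{(\mu,\lb;\widetilde{F})}_k]=i_k^*([\clE^{(\mu,\lb;\widetilde{F})}_k])$.

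Next I would identify $i_k$ explicitly. Under the standing hypotheses ($\mu\in\piR$, and $v(\lb_{(1)})>v(\mu)$, hence $(p-1)v(\mu)<v(p)$, when $char(R)=0$) we have $(\gmu)_k\simeq \Gak$ and $(G_{\mu,1})_k\simeq \apk$ as recalled in \S\ref{sec:glbn}. Moreover $P_{\mu,1}(T)=\frac{(1+\mu T)^p-1}{\mu^p}\equiv T^p \bmod \pi$, because every intermediate coefficient $\binom{p}{\ell}\mu^{\ell-p}$ has valuation $v(p)-(p-\ell)v(\mu)>0$ for $1\le \ell\le p-1$. Thus $\psi_{\mu,1}$ reduces to the Frobenius on $\Gak$ and $i_k$ is exactly the canonical closed immersion $\apk=\ker\fr\hookrightarrow\Gak$. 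Consequently $i_k^*$ is the surjection $H_s^2(\Gak,\Gak)\simeq\Ext^1_k(\Gak,\Gak)\to\Ext^1_k(\apk,\Gak)\simeq H_0^2(\apk,\Gak)$ of the discussion following \ref{teo:ext(ap,ap)}, which on factor sets is restriction of the cocycle to $\apk\times\apk$, i.e.\ reduction modulo $U^p$ and $V^p$.

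The heart of the argument is then a direct computation of this restriction on the generators. By the preceding proposition, $[\clE^{(\mu,\lb;\widetilde{F})}_k]$ is the class of $\sum_{r\ge 1}\frac{\tilde{\fr}_{r-1}(\widetilde{\bbf{a}})}{\lb}C_r$, where $C_r=\frac{U^{p^r}+V^{p^r}-(U+V)^{p^r}}{p}=-\frac1p\sum_{0<\ell<p^r}\binom{p^r}{\ell}U^\ell V^{p^r-\ell}$. For $r\ge 2$ and $0<\ell<p^r=p\cdot p^{r-1}$, at least one of $\ell,\ p^r-\ell$ is $\ge p$ (otherwise $p^r=\ell+(p^r-\ell)<2p$, forcing $r=1$), so every monomial of $C_r$ is divisible by $U^p$ or $V^p$ and hence vanishes in $k[U,V]/(U^p,V^p)$; thus $i_k^*C_r=0$. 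For $r=1$ the monomials $U^\ell V^{p-\ell}$ have $0<\ell<p$, so $C_1$ restricts nontrivially and represents the generator of $H_0^2(\apk,\Gak)\simeq k$. Hence only the $r=1$ term survives and the class equals $\frac{\tilde{\fr}_0(\widetilde{\bbf{a}})}{\lb}C_1$.

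It remains to identify the coefficient. Writing $\fr-[\mu^{p-1}]=(\tilde{\fr}_0,\tilde{\fr}_1,\dots)$ and using that the zeroth Witt component of a difference is the difference of zeroth components, together with $([\mu^{p-1}]\widetilde{\bbf{a}})_0=\mu^{p-1}\widetilde{a}_0$, one gets $\tilde{\fr}_0(\widetilde{\bbf{a}})=\widetilde{a}_0^{\,p}-\mu^{p-1}\widetilde{a}_0=(\fr-\mu^{p-1})(\widetilde{a}_0)$. Since $\bbf{a}\in\widehat{W}(R/\lb R)^{\fr-[\mu^{p-1}]}$, this quantity lies in $\lb R$, so $\frac{(\fr-\mu^{p-1})(\widetilde{a}_0)}{\lb}$ is a well-defined element of $R$ whose image in $k$ is the sought coefficient of $C_1$, yielding the stated formula. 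I expect the only genuinely delicate point to be the vanishing $i_k^*C_r=0$ for $r\ge 2$ (the monomial divisibility estimate); the identification of $i_k$ with the Frobenius-kernel inclusion and the computation of $\tilde{\fr}_0$ are then routine bookkeeping of reductions.
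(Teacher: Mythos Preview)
Your argument is correct and is exactly the approach the paper takes: the paper's proof is the one-line observation that $[\clE^{(\mu,\lb;E_p(\bbf a,\mu,T))}_k]\mapsto[\widetilde{\clE}^{(\mu,\lb;E_p(\bbf a,\mu,T))}_k]$ under the restriction $H_s^2(\Gak,\Gak)\to H_0^2(\apk,\Gak)$, and you have simply unwound this by checking explicitly that $C_r|_{\apk}=0$ for $r\ge 2$ and identifying the surviving coefficient $\tilde{\fr}_0(\widetilde{\bbf a})/\lb$ with $(\widetilde a_0^{\,p}-\mu^{p-1}\widetilde a_0)/\lb$ in $k$. The only minor wrinkle is that over $R$ the zeroth component of $\fr(\widetilde{\bbf a})$ is $\widetilde a_0^{\,p}+p\widetilde a_1$, but under the standing hypothesis $(p-1)v(\lb)<v(p)$ one has $v(p)>v(\lb)$, so $p\widetilde a_1/\lb\in\pi R$ and the extra term dies in $k$, in line with the paper's formulation.
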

\begin{proof}
This follows from the fact that
$[{\clE}^{(\mu,\lb;E_{p}(\bbf{a},\mu,T))}_k]\mapsto
[\widetilde{\clE}^{(\mu,\lb;E_{p}(\bbf{a},\mu,T))}_k]$ through the
map
$$ \Ext^1_k(\Gak,\Gak)\simeq H_0^2(\Gak,\Gak)\too H_0^2(\apk,\Gak)\simeq
\Ext^1_k(\apk,\Gak).
 $$
\end{proof}

Let us take an extension class $[\clE^{(\mu,\lb;E_p(a,\mu;
T),j)}]$. Let $\widetilde{a}\in R$ be a lifting of $a\in R/\lb R$.
We have on the special fiber
$$
\clE^{(\mu,\lb;\widetilde{F}(T),j)}_k=\Sp(k[T_1,T_2]/(T_1^p,T_2^p-(-\frac{\widetilde{F}(T_1)^p(1+\mu
T_1)^{-j}-1}{\lb^p}))),
$$
where
$\widetilde{F}(T)=\sum_{i=0}^{p-1}\frac{\widetilde{a}(\widetilde{a}-\mu)\dots
(\widetilde{a}-(i-1)\mu)}{i!}T^i$. If $char(R)=p$ we have, more
precisely,
$$
\clE^{(\mu,\lb;\widetilde{F}(T),j)}_k=\Sp(k[T_1,T_2]/(T_1^p,T_2^p-(\frac{j\mu}{\lb^p}T_1))),
$$
with $j\mu \equiv 0 \mod \lb^p$.

Let us now suppose $char(R)=0$. Let us consider
$E_p([\widetilde{a}],\mu;T)\in R[[T]]$. We have, by
\ref{lem:suriettività mappa tra hom}(ii),
$E_p([\widetilde{a}],\mu;T)\equiv \widetilde{F}(T)\mod (\lb T,
\frac{(1+\mu T)^p-1)}{\mu^p})$. Thus, since
$$
(p-1)v(\lb)<v(p) \quad \text{ and } \quad T^p\equiv 0 \mod (\pi,
\frac{(1+\mu T)^p-1}{\mu^p}),
$$
we have
\begin{equation}\label{eq:E_p=F^p}
E_p([\widetilde{a}],\mu;T)^p\equiv \widetilde{F}(T)^p \mod (\lb^p
\pi, \frac{(1+\mu T)^p-1}{\mu^p}).
\end{equation}
We now suppose $p>2$. Let us  consider
$E_p([\frac{p}{\mu^{p-1}}\widetilde{a}^p]+V([\widetilde{a}^p]),\mu;T)
\in R[[T]]$. 
 By \eqref{eq:phi*:W(R)too W(R)}, we have
$E_p([\frac{p}{\mu^{p-1}}\widetilde{a}^p]+V([\widetilde{a}^p]),\mu;T)\equiv
E_p([\widetilde{a}^p],\mu^p;\frac{(1+\mu T)^p-1}{\mu^p})\mod
\lb^p$.
Moreover by definitions
$$
E_p([\frac{p}{\mu^{p-1}}\widetilde{a}^p]+V([\widetilde{a}^p]),\mu;T)\equiv
E_p([\frac{p}{\mu^{p-1}}\widetilde{a}^p],\mu;T)\equiv (1+\mu
T)^{\frac{p \widetilde{a}^p}{\mu^p}}\equiv
E_p([\widetilde{a}^p],\mu^p;\frac{(1+\mu T)^p-1}{\mu^p})\mod
T^p.
$$
Hence
\begin{equation*}
E_p([\frac{p}{\mu^{p-1}}\widetilde{a}^p]+V([\widetilde{a}^p]),\mu;T)\equiv
E_p([\widetilde{a}^p],\mu^p;\frac{(1+\mu T)^p-1}{\mu^p})\mod \lb^p
T^p.
\end{equation*}
Therefore,  since $T^p\equiv 0 \mod (\pi, \frac{(1+\mu
T)^p-1}{\mu^p})$,
\begin{equation}\label{eq:E_p=E_p mod lpTp}
E_p([\frac{p}{\mu^{p-1}}\widetilde{a}^p]+V([\widetilde{a}^p]),\mu;T)\equiv
E_p([\widetilde{a}^p],\mu^p;\frac{(1+\mu T)^p-1}{\mu^p})\mod
\bigg(\lb^p\pi,\frac{(1+\mu T)^p-1}{\mu^p}\bigg).
\end{equation}

 By \cite[2.9.1]{SS4} we have
$$
E_p([\widetilde{a}],\mu;T)^p(1+\mu
T)^{-j}E_p([\frac{p}{\mu^{p-1}}\widetilde{a}^p]+V([\widetilde{a^p}]),\mu;T)^{-1}=E_p(p[\widetilde{a}]-j[\mu]-[\frac{p}{\mu^{p-1}}\widetilde{a}^p]-V([\widetilde{a^p}]),\mu;T).
$$

 We remark that, by the definition of sum between Witt vectors  and
 the proof of \ref{prop:rad_p},
$$
p[\widetilde{a}]-j[\mu]-[\frac{p}{\mu^{p-1}}\widetilde{a}^p]-V([\widetilde{a}^p])=[pa-j\mu-\frac{p}{\mu^{p-1}}\widetilde{a}^p]+V(\bbf{c})
$$
with $\bbf{c}\equiv 0\mod \lb^p$.  Therefore
\begin{align*}
E_p(p[\widetilde{a}]-j[\mu]-[\frac{p}{\mu^{p-1}}\widetilde{a}^p]-V([\widetilde{a}^p]),\mu;T)&=E_p([p\widetilde{a}-j\mu-\frac{p}{\mu^{p-1}}\widetilde{a}^p],\mu;T)E_p
(\bbf{c},\mu^p;T^p) \\
&\equiv
E_p([p\widetilde{a}-j\mu-\frac{p}{\mu^{p-1}}\widetilde{a}^p],\mu;T)\mod
\bigg(\lb^p\pi, \frac{(1+\mu T)^p-1}{\mu^p}\bigg),
\end{align*}
where the last congruence follows from $T^p\equiv 0 \mod (\pi,
\frac{(1+\mu T)^p-1}{\mu^p})$ and $\bbf{c}\equiv 0\mod \lb^p$. The
case $p=2$ is similar using
$E_p([\frac{2}{\mu}\widetilde{a}^2]+V([\widetilde{a}^2])+\widetilde{V}([\widetilde{a}^2]),\mu;T)
\in R[[T]]$.

The above discussion implies, using \eqref{eq:E_p=F^p} and
\eqref{eq:E_p=E_p mod lpTp},
\begin{align*}
\frac{\widetilde{F}(T_1)^p(1+\mu
 T_1)^{-j}-1}{\lb^p}&\equiv\frac{E_p([p\widetilde{a}-j\mu-\frac{p}{\mu^{p-1}}\widetilde{a}^p],\mu;T_1)-1}{\lb^p}\\
 &\equiv \frac{p\widetilde{a}-j\mu-\frac{p}{\mu^{p-1}}\widetilde{a}^p}{\lb^p}T_1\mod
 \bigg(\pi, \frac{(1+\mu T_1)^p-1}{\mu^p}\bigg).
\end{align*}
On the other hand, with no restriction on the characteristic of
$R$, $\clE^{(\mu,\lb;E_{p}(aT),j)}_k\mapsto
\widetilde{\clE}^{(\mu,\lb;E_{p}(aT))}_k$ through the map
$\Ext^1_k(\apk,\apk)\to \Ext^1_k(\apk,\Gak)$.

Therefore $\clE^{(\mu,\lb;E_{p}(aT),j)}_k\simeq E_{\beta,\gamma}$
with
$\beta=(-\frac{p\widetilde{{a}}-j\mu-\frac{p}{\mu^{p-1}}\widetilde{{a}}^p}{\lb^p}
\mod \pi)$ and
$\gamma=(\frac{\widetilde{{a}}^p-\mu^{p-1}\widetilde{{a}}}{\lb}\mod
\pi)$.
So we have  proved 
the following result.
\begin{prop}
Let $\mu,\lb\in \pi R$. If $char(R)=0$ we also assume
$v(\lb_{(1)})> v(\lb),v(\mu)$. Then
$[\clE^{(\mu,\lb;E_{p}(a,\mu;T),j)}_k]\in \Ext_k^1({\apk},{\apk})$
coincides with the class of
$$
\bigg(-\frac{p\widetilde{{a}}-j\mu-\frac{p}{\mu^{p-1}}\widetilde{{a}}^p}{\lb^p},
\frac{\widetilde{{a}}^p-\mu^{p-1}\widetilde{a}}{\lb} C_1\bigg),
$$
if $char(R)=0$ and
$$
\bigg(\frac{j\mu}{\lb^p},
\frac{\widetilde{{a}}^p-\mu^{p-1}\widetilde{a}}{\lb} C_1\bigg),
$$
if $char(R)=p$, where $\widetilde{{a}}\in R$ is a lifting of $a\in
R/\lb R$.
\end{prop}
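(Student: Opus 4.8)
The plan is to read off both components of the extension class under the splitting of Theorem~\ref{teo:ext(ap,ap)}, which identifies $\Ext^1_k(\apk,\apk)$ with pairs $(\beta,\gamma C_1)\in \Hom_k(\apk,\Gak)\times \Ext^1_k(\apk,\Gak)$ via $(\beta,\gamma C_1)\mapsto [E_{\beta,\gamma}]$. Since the equation of the special fibre $\clE^{(\mu,\lb;\widetilde{F},j)}_k$ has the shape $\Sp(k[T_1,T_2]/(T_1^p,T_2^p-\beta T_1))$, the first component $\beta$ is simply the coefficient of $T_1$ obtained after linearising the relation defining $T_2^p$, while the second component $\gamma$ is recovered from the image under the projection $\Ext^1_k(\apk,\apk)\too \Ext^1_k(\apk,\Gak)$. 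In fact the entire computation is carried out in the discussion preceding the statement, so the proof amounts to assembling that discussion.

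First I would compute $\beta$. From the explicit shape of $\clE^{(\mu,\lb;\widetilde{F},j)}_k$ recalled above, $\beta T_1=-\dfrac{\widetilde{F}(T_1)^p(1+\mu T_1)^{-j}-1}{\lb^p}$ in $k[\apk]=k[T_1]/(T_1^p)$, so the heart of the matter is to reduce $\widetilde{F}(T_1)^p(1+\mu T_1)^{-j}$ modulo $(\pi,\frac{(1+\mu T_1)^p-1}{\mu^p})$ to its linear part. Combining the congruence \eqref{eq:E_p=F^p} between $\widetilde{F}(T)^p$ and $E_p([\widetilde{a}],\mu;T)^p$ with \eqref{eq:E_p=E_p mod lpTp}, the product formula \cite[2.9.1]{SS4} for deformed Artin--Hasse exponentials, and the Witt-vector identity $p[\widetilde{a}]-j[\mu]-[\frac{p}{\mu^{p-1}}\widetilde{a}^p]-V([\widetilde{a}^p])=[p\widetilde{a}-j\mu-\frac{p}{\mu^{p-1}}\widetilde{a}^p]+V(\bbf{c})$ with $\bbf{c}\equiv 0\bmod \lb^p$ (from the proof of \ref{prop:rad_p}), one obtains
\begin{equation*}
\frac{\widetilde{F}(T_1)^p(1+\mu T_1)^{-j}-1}{\lb^p}\equiv \frac{p\widetilde{a}-j\mu-\frac{p}{\mu^{p-1}}\widetilde{a}^p}{\lb^p}\,T_1 \mod \bigg(\pi,\frac{(1+\mu T_1)^p-1}{\mu^p}\bigg)
\end{equation*}
when $char(R)=0$, whence $\beta=-\frac{p\widetilde{a}-j\mu-\frac{p}{\mu^{p-1}}\widetilde{a}^p}{\lb^p}$; when $char(R)=p$ the relation defining $T_2^p$ is already the linear term $\frac{j\mu}{\lb^p}T_1$, giving $\beta=\frac{j\mu}{\lb^p}$ directly.

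Next I would compute $\gamma$. The projection $\Ext^1_k(\apk,\apk)\too \Ext^1_k(\apk,\Gak)$ sends $\clE^{(\mu,\lb;E_p(a,\mu;T),j)}_k$ to $\widetilde{\clE}^{(\mu,\lb;E_p(a,\mu;T))}_k$, forgetting the $j$-twist, so the $\Ext^1_k(\apk,\Gak)$-component is governed by Corollary~\ref{cor:ext(ap,ga)}. Taking the lifting $\widetilde{\bbf{a}}=[\widetilde{a}]=(\widetilde{a},0,\dots)$ of $\bbf{a}=[a]$, that corollary returns the class $\frac{(\fr-\mu^{p-1})(\widetilde{a})}{\lb}C_1=\frac{\widetilde{a}^p-\mu^{p-1}\widetilde{a}}{\lb}C_1$, so $\gamma=\frac{\widetilde{a}^p-\mu^{p-1}\widetilde{a}}{\lb}$, independently of the characteristic. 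Reassembling $(\beta,\gamma C_1)$ through the isomorphism of Theorem~\ref{teo:ext(ap,ap)} yields the two asserted formulas.

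The main obstacle is the linearisation step for $\beta$ in the case $char(R)=0$: one must track the congruences modulo $\lb^p\pi$ and modulo $\frac{(1+\mu T_1)^p-1}{\mu^p}$ simultaneously, control the residual Verschiebung term $V(\bbf{c})$, and treat $p=2$ separately (replacing the $p>2$ expression by $E_p([\frac{2}{\mu}\widetilde{a}^2]+V([\widetilde{a}^2])+\widetilde{V}([\widetilde{a}^2]),\mu;T)$). Once these congruences are established, the identification of the extension class under the splitting of $\Ext^1_k(\apk,\apk)$ is purely formal.
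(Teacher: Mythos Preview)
Your proposal is correct and follows essentially the same approach as the paper: the proof is precisely the discussion preceding the proposition (the paper even signals this with ``So we have proved the following result''), and you have accurately identified the two components of the argument --- the linearisation of $\widetilde{F}(T_1)^p(1+\mu T_1)^{-j}$ via the Artin--Hasse congruences \eqref{eq:E_p=F^p}, \eqref{eq:E_p=E_p mod lpTp} and the Witt-vector identity from the proof of \ref{prop:rad_p} to extract $\beta$, and the application of Corollary~\ref{cor:ext(ap,ga)} after projecting to $\Ext^1_k(\apk,\Gak)$ to obtain $\gamma$. Your remark about the $p=2$ variant also matches the paper's treatment.
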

\subsection{Case char(R)=0 and  $\mathbf{v(\lb_{(1)})=v(\mu)> v(\lb)>0}$}

In this situation we have $${(\gmx{1})}_k\simeq \Z/p\Z \quad
\text{ and } \quad {(\glx{1})}_k\simeq \apk.$$

%
\begin{prop}
Let $\mu,\lb\in \piR$ be such that $ v(\mu)=v(\lb_{(1)})>v(\lb)$.
Then $\clE^{(\mu,\lb;E_{p}(aT),j)}_k$ is the trivial extension.
\end{prop}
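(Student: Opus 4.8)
The plan is to show that the extension class vanishes after restriction to the special fibre by identifying it with an element of a Hochschild cohomology group which is trivial over $k$. First I would record the two identifications of fibres. Since $v(\mu)=v(\lb_{(1)})$ we have $(p-1)v(\mu)=v(p)$, so $\gmx{1}$ is \'etale of order $p$; as its generic fibre is the constant group, $\gmx{1}\simeq\Z/p\Z$, and after rescaling we may take $\mu=\lb_{(1)}$. On the other hand $0<v(\lb)<v(\lb_{(1)})$ gives $(\glx{1})_k\simeq\apk$. Thus the task is to prove that the class of $\clE^{(\mu,\lb;E_p(aT),j)}_k$ in $\Ext^1_k(\Z/p\Z,\apk)$ is zero.

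The key structural input is Example \ref{ex:Ext 1 con mu=lb_(1)}: for $\mu=\lb_{(1)}$ the whole subgroup $\{[\clE^{(\mu,\lb;F,j)}];(F,j)\in\rad\}\subseteq\Ext^1_S(\Z/p\Z,\glx{1})$ is precisely the image of the Hochschild group $H_0^2(\Z/p\Z,\glx{1})$ under the first edge map of the exact sequence of \cite[III \S 6, 2.5]{DG}. In particular our class $[\clE^{(\mu,\lb;E_p(aT),j)}]$ comes from $H_0^2(\Z/p\Z,\glx{1})$. I would then run the same exact sequence over $\Sp(k)$ in place of $\Sp(K)$, producing the commutative square
$$\xymatrix{H_0^2(\Z/p\Z,\glx{1})\ar[r]\ar[d]& \Ext^1_S(\Z/p\Z,\glx{1})\ar[d]\\ H_0^2(\Z/p\Z,\apk)\ar[r]& \Ext^1_k(\Z/p\Z,\apk)}$$
whose vertical arrows are restriction to the special fibre. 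Chasing our class through this square reduces everything to showing that $H_0^2(\Z/p\Z,\apk)=0$.

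That final vanishing is computed exactly as in Example \ref{ex:Ext 1 con mu=lb_(1)}: by \cite[III \S 6, 4.2]{DG} together with \cite[VIII \S4]{ser}, for the constant cyclic group $\Z/p\Z$ one has $H_0^2(\Z/p\Z,\apk)\simeq\apk(k)=\{x\in k\mid x^p=0\}=0$. Since the left vertical map carries our class into this zero group, the restricted class in $\Ext^1_k(\Z/p\Z,\apk)$ is $0$, which is exactly the statement that $\clE^{(\mu,\lb;E_p(aT),j)}_k$ is the trivial extension.

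I expect the only delicate point to be the middle step: one must verify that restriction to the special fibre genuinely sends the Hochschild part over $S$ into the Hochschild part over $k$, i.e.\ the functoriality of $H_0^2(\Z/p\Z,-)$ in the coefficient group scheme under the base change $\Sp(k)\to S$, so that the commutativity of the left square is legitimate. Everything else is immediate from the hypotheses: the identifications $\gmx{1}\simeq\Z/p\Z$ and $(\glx{1})_k\simeq\apk$ follow from the constraints on the valuations fixed earlier, and the triviality of $\apk(k)$ is elementary.
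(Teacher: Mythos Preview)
Your argument is correct and takes a genuinely different route from the paper. After normalising to $\mu=\lb_{(1)}$, the paper does \emph{not} pass through the vanishing of $H_0^2(\Z/p\Z,\apk)$. Instead it factors the extension $[\clE^{(\lb_{(1)},\lb;F,j)}]$ as the pushforward of $[\clE^{(\lb_{(1)},\lb_{(1)};E_p(j\eta T),j)}]$ along the model map $G_{\lb_{(1)},1}\to G_{\lb,1}$ given by $T\mapsto\frac{\lb_{(1)}}{\lb}T$; since $v(\lb_{(1)}/\lb)>0$, this map is zero on the special fibre, so the pushforward of the extension class is zero there. This uses Example~\ref{ex:Ext 1 con mu=lb_(1)} only for the uniqueness statement (the class is determined by its generic fibre, hence by $j$), together with \ref{cor:p2 surjective} to identify $F$, whereas you use that example for the Hochschild identification $H_0^2(\Z/p\Z,\glx{1})\simeq\{[\clE^{(\lb_{(1)},\lb;F_j,j)}]\}$.

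Your approach is arguably more direct: once you know the class lives in the Hochschild part, the vanishing $H_0^2(\Z/p\Z,\apk)\simeq\apk(k)=0$ kills it immediately, with no need for the auxiliary factorisation through $\lb=\lb_{(1)}$. The paper's approach, on the other hand, avoids invoking the functoriality of the Hochschild edge map under base change (the point you flag as delicate) and instead reduces to the transparently geometric fact that multiplication by an element of positive valuation vanishes on the special fibre.
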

\begin{proof}
We can suppose $\mu=\lb_{(1)}$. From  \ref{ex:Ext 1 con mu=lb_(1)}
it follows that any extension  of type
$[\clE^{(\lb_{(1)},\lb;F,j)}]$ is uniquely determined by the
induced extension  over $K$. Since by \ref{cor:p2 surjective} we
have $F(T)\equiv E_p(j\eta T)\mod \lb$ then
$[\clE^{(\lb_{(1)},\lb;F,j)}]$ is  the image of
$[\clE^{(\lb_{(1)},\lb_{(1)};E_p(j\eta T),j)}]$ through the
morphism
$$
\Ext^1(G_{\lb_{(1)},1},G_{\lb_{(1)},1})\too
\Ext^1(G_{\lb_{(1)},1},G_{\lb,1})
$$
induced by the map $\Z/p\Z\simeq G_{\lb_{(1)},1}\too G_{\lb,1}$
given by $T\mapsto \frac{\lb_{(1)}}{\lb}T$.  But the above
morphism is the zero morphism on the special fiber. So we are
done.
\end{proof}

  We remark that if $pv(\lb)\le v(\lb_{(1)})$ then $\eta\equiv
0\mod \lb$, indeed in such a case $v(\lb)\le
v(\lb_{(2)})=v(\eta)$.

\subsection{Case  char(R)=0 and $\mathbf{v(\lb_{(1)})=v(\lb)> v(\mu)>0}$}
 We have
$${(\gmx{1})}_k\simeq \ap \quad \text{ and }
{(\glx{1})}_k\simeq \Z/p\Z.$$ From \cite[III \S 6 7.3]{DG} it
follows that $\Ext_k^1(\alpha_{p,k},\Z/p\Z)=0$.
\subsection{Case  char(R)=0 and $\mathbf{v(\lb_{(1)})=v(\mu)= v(\lb)}$}

 We have
$${(\gmx{1})}_k\simeq \Z/p\Z \quad \text{ and }
{(\glx{1})}_k\simeq \Z/p\Z.$$ Without loss of generality we can
suppose $\mu=\lb=\lb_{(1)}$.
The Artin Schreier sequence $$0\to \Z/p\Z \too
\Gak\on{\fr-1}{\too} \Gak\too 0$$ induces the following exact
sequence
\begin{align*}
 \Hom_{k-gr}(\Z/p\Z,\Gak)\on{{\fr}-1}{\too}\Hom_{k-gr}(\Z/p\Z,\Gak)&\too \Ext_k^1(\Z/p\Z,\Z/p\Z)\too \\
\too\Ext_k^1&(\Z/p\Z,\Gak)\on{{\fr}-1}{\too}\Ext_k^1(\Z/p\Z,\Gak)
\end{align*}
There are canonical isomorphisms $\Hom_{k-gr}(\Z/p\Z,\Gak)\simeq
k$ and \mbox{$\Ext^1_k(\Z/p\Z,\Gak)\simeq k$}
(see \cite[III \S 6 4.3]{DG}). Therefore we have the exact
sequence
\begin{equation}\label{eq:ext(Z/pZ,Z/pZ)}
0\too \Z/p\Z\too \Ext_k^1(\Z/p\Z,\Z/p\Z)\too k/(F-1)k\too 0.
\end{equation}
We recall that
$\Ext^1_k(\Z/p\Z,\Gak)=H^2_0(\Z/p\Z,\Gak)$ is freely generated as
a right $k$-module by the class of the  cocycle
$C_1=\frac{U^p+V^p-(U+V)^p}{p}$. 
The above sequence splits and we have the following isomorphism,
$$
k/({\fr}-1)(k) \times \Z/p\Z\too \Ext^1_k(\Z/p\Z,\Z/p\Z),
$$
 given by
$$
(a,b)\mapsto [E_{a,b}].
$$
where the group scheme $E_{a,b}$ is so defined: let $\bar{a}\in k$
a lifting of $a$,
$$
E_{a,b}:=\Sp(k[T_1,T_2]/(T_1^p-T_1,T_2^p-T_2-\bar{a} T_1))
$$

\begin{enumerate}
    \item  comultiplication
    \begin{align*}
    T_1\longmapsto &T_1\pt 1+1\pt T_1\\
    T_2\longmapsto &T_2\pt 1 + 1\pt T_2 + b \frac{T_1^p\pt 1+ 1\pt T_1^p-(T_1\pt 1+1\pt T_1)^p}{p}
    \end{align*}
    \item counit
    \begin{align*}
    &T_1\longmapsto 0\\
    &T_2\longmapsto 0
    \end{align*}
    \item coinverse
    \begin{align*}
    &T_1\longmapsto -T_1\\
    &T_2\longmapsto -T_2
\end{align*}
or
    \begin{align*}
    &T_1\longmapsto -T_1\\
    &T_2\longmapsto -bT_1^2-T_2
\end{align*}
if $p=2$.
\end{enumerate}
  We now study the reduction on
the special fiber of the group scheme
$\clE^{(\lb_{(1)},\lb_{(1)};E_{p}(j \eta S),j)}$ with $j\in
\Z/p\Z$.
\begin{prop} For any $j\in \Z/p\Z$, $[\clE^{(\lb_{(1)},\lb_{(1)};E_{p}(j\eta T),j)}_k]=E_{0,j}\in
\Ext^1_k({\Z/p\Z},{\Z/p\Z})$. 

\end{prop}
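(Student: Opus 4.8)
The plan is to reduce the statement for arbitrary $j$ to the single case $j=1$, exploiting that every map in sight is additive. Without loss of generality take $\mu=\lb=\lb_{(1)}$, so that $(\gmx{1})_k$ and $(\glx{1})_k$ are both $\Z/p\Z$ and the target is $\Ext^1_k(\Z/p\Z,\Z/p\Z)\simeq (k/(\fr-1)k)\times\Z/p\Z$ via $(a,b)\mapsto E_{a,b}$. I would study the map
$$\Psi\colon \Z/p\Z\too \Ext^1_k(\Z/p\Z,\Z/p\Z),\qquad j\longmapsto [\clE^{(\lb_{(1)},\lb_{(1)};E_p(j\eta T),j)}_k],$$
and factor it as a composition of three homomorphisms: first $j\mapsto (j\eta,j)\in\Phi_{\lb_{(1)},\lb_{(1)}}$; then the group isomorphism $\Phi_{\lb_{(1)},\lb_{(1)}}\iso\{[\clE^{(\lb_{(1)},\lb_{(1)};F,j)}]\}$ of \ref{prop:rad_p}; and finally the reduction-to-the-special-fiber map $\Ext^1_S(G_{\lb_{(1)},1},G_{\lb_{(1)},1})\too\Ext^1_k(\Z/p\Z,\Z/p\Z)$, which is a homomorphism since it is base change along $\Sp(k)\to S$. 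For the first arrow one must check that $j\mapsto(j\eta,j)$ is genuinely the homomorphism $j\mapsto j\cdot(\eta,1)$ (note $(\eta,1)\in\Phi_{\lb_{(1)},\lb_{(1)}}$ by \ref{cor:p2 surjective}(ii)): this is exactly where \ref{lem:somma termine per termine} enters, as it forces the first coordinate of $\Phi$ to add by ordinary addition in $R/\lb_{(1)}R$ through the Teichm\"{u}ller map, so that the $j$-fold sum of $(\eta,1)$ is $(j\eta,j)$. Granting this, $\Psi$ is a homomorphism of groups.

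Since $j\mapsto E_{0,j}$ is precisely the homomorphism $\Z/p\Z\to\{E_{0,b}\}$ sending $1$ to $E_{0,1}$, it then suffices to prove $\Psi(1)=E_{0,1}$; the general case follows from $\Psi(j)=j\cdot\Psi(1)=E_{0,j}$. For $j=1$ I would invoke \ref{ex:Z/p^2Z SS}, which identifies $\clE^{(\lb_{(1)},\lb_{(1)};E_p(\eta T),1)}$ with the constant group scheme $(\Z/p^2\Z)_R$, whence $\Psi(1)=[(\Z/p^2\Z)_k]$. It remains to see that the constant extension $0\to\Z/p\Z\to\Z/p^2\Z\to\Z/p\Z\to 0$ over $k$ is exactly $E_{0,1}$. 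For this I would use the presentation $(\Z/p^2\Z)_k\simeq \W_2(\F_p)_k$: in Witt coordinates the distinguished subgroup is $V(\F_p)=p\,\W_2(\F_p)$, the comultiplication on the second coordinate carries the symmetric cocycle $C_1$ with coefficient $1$ (so $b=1$), and each coordinate satisfies $T_i^p-T_i=0$ (so $a=0$). This matches the sub and quotient coming from the reductions of $G_{\lb_{(1)},1}\hookrightarrow\clE$ and $\clE\twoheadrightarrow G_{\lb_{(1)},1}$, giving $(\Z/p^2\Z)_k=E_{0,1}$.

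The main obstacle is this last normalization: one must ensure that the isomorphism of \ref{ex:Z/p^2Z SS} is an isomorphism of extensions compatible with the chosen sub and quotient, so that the reduction lands on the distinguished generator $E_{0,1}$ and not on some $E_{0,b_0}$ with $b_0\ne 1$ — only $b_0=1$ yields $\Psi(j)=E_{0,j}$. If one wished to avoid relying on the explicit $\W_2$-presentation, the alternative is a direct reduction mod $\pi$ of the two defining equations of $\clE^{(\lb_{(1)},\lb_{(1)};E_p(j\eta T),j)}$, reading off $b$ from the $C_1$-term of the comultiplication (as in the preceding subsection's $\apk$-computation) and $a$ from the relation on $T_2$; but then one must absorb the unit $\overline{-\lb_{(1)}^{p-1}/p}$ through the rescaling that puts $\overline{P_1}=T_1^p+\overline{(p/\lb_{(1)}^{p-1})}\,T_1$ into Artin--Schreier normal form, which is precisely the bookkeeping the homomorphism argument is designed to bypass.
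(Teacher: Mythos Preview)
Your approach is correct but takes a different route from the paper's. The paper argues directly for each $j$: since $\clE^{(\lb_{(1)},\lb_{(1)};E_p(j\eta T),j)}$ is a constant group scheme (isomorphic to $\Z/p^2\Z$ for $j\ne0$ and to $(\Z/p\Z)^2$ for $j=0$), its special fiber admits a scheme-theoretic section, which forces the first component $a$ to vanish. It then reads off $b$ from an explicit reduction of the defining equations, obtaining
\[
b\equiv -j\,\frac{\eta^p}{\lb_{(1)}\,(p-1)!}\pmod\pi \;=\; j
\]
via Wilson's theorem $(p-1)!\equiv -1\pmod p$ together with $\eta^p/\lb_{(1)}\equiv\lb_{(2)}^p/\lb_{(1)}\equiv 1\pmod\pi$.

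Your argument instead exploits additivity to reduce everything to the single case $j=1$, then identifies $(\Z/p^2\Z)_k$ with $E_{0,1}$ through the Witt presentation $W_2(\F_p)$. This is cleaner conceptually and sidesteps the explicit cocycle computation and Wilson's theorem; the cost is the normalization check at $j=1$ that you rightly flag (though it is not hard, since $\Z/p^2\Z$ has a unique subgroup of order $p$, so the extension structures automatically match). One small correction: your appeal to \ref{lem:somma termine per termine} is misplaced. The group $\Phi_{\mu,\lb}$ is \emph{by definition} a subquotient of $(R/\lb R)\times\Z/p\Z$ with ordinary componentwise addition, so $j\cdot(\eta,1)=(j\eta,j)$ needs no argument; that lemma was used upstream in establishing the isomorphism $\Phi_{\mu,\lb}\simeq\rad$ of \ref{prop:rad_p}, but here you only need the conclusion.
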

\begin{proof}
As group schemes, $\clE^{(\lb_{(1)},\lb_{(1)};E_{p}(j \eta
T),j)}\simeq\Z/p^2\Z$, if $j\neq 0$, and
$\clE^{(\lb_{(1)},\lb_{(1)};1,0)}\simeq \Z/p\Z\times \Z/p\Z$
otherwise. In particular $\clE^{(\lb_{(1)},\lb_{(1)};E_{p}(j \eta
T),j)}_k$ has a  scheme-theoretic section. It is easy to see that
$\clE^{(\lb_{(1)},\lb_{(1)};E_{p}(j \eta T),j)}_k\simeq E_{0,b}$
with $$b=(-j\frac{\eta^p}{\lb_{(1)}(p-1)!} \mod \pi)=j,$$ since
$\frac{\eta^p}{\lb_{(1)}}\equiv
\frac{\lb_{(2)}^p}{\lb_{(1)}}\equiv 1\mod \pi$ and $(p-1)!\equiv
-1\mod \pi$ (Wilson Theorem). 
\end{proof}

We finally remarks that the exact sequence
\eqref{eq:ext(Z/pZ,Z/pZ)} also reads as
$$
0\too H_0^2(\Z/p\Z,\Z/p\Z)\too \ext^1_k(\Z/p\Z,\Z/p\Z)\too
H^1(k,\Z/p\Z),
$$
where the isomorphism $H^1(k,\Z/p\Z)\simeq k/(F-1)k$ comes from
the Artin-Schreier Theory. And we have the following commutative
diagram with exact rows

$$
\xymatrix{0\ar[r] & H_0^2(\mu_{p,K},\mu_{p,K})\ar[r]&
\Ext^1_S(\mu_{p,K},\mu_{p,K})\ar[r]&H^1(K,\Z/p\Z)\ar[r]&0\\
 0\ar[r] & H_0^2(\Z/p\Z,\Z/p\Z)\ar[r]\ar^{\wr}[d]\ar_{\wr}[u]&
\Ext^1_S(\Z/p\Z,\Z/p\Z)\ar[r]\ar[d]\ar[u]&H^1(S,\Z/p\Z)\ar[r]\ar[d]\ar[u]&0\\
 0\ar[r] & H_0^2(\Z/p\Z,\Z/p\Z)\ar[r]&
\Ext^1_k(\Z/p\Z,\Z/p\Z)\ar[r]&H^1(k,\Z/p\Z)\ar[r]&0}
$$
This diagram shows that the above proposition is an expression of
the theory unifying the Kummer and  Artin-Schreier theories.
\appendix

\section{} \label{A:Caruso}
%

\begin{center}
{\textsc{ Classification of integral models of $\mu_{p^2,K}$ via
Breuil-Kisin theory}} \\

\vspace{2mm}

\sc{by Xavier Caruso\footnote{\noindent {\sc IRMAR, Université de
Rennes 1, Campus de Beaulieu, 35042 Rennes Cedex, France}\\
\hspace*{.5cm} {\em Email address: xavier.caruso@normalesup.org}}}
\end{center}

\vspace{5mm}

In this appendix, we show how the theory presented by Breuil in
\cite{breuil} and developed by Kisin in \cite{kisin} gives us the
possibility to obtain very quickly in some cases a statement
analogous to \ref{cor:modelli di Z/p^2 Z sono cosi'2} of this
paper. Although our approach is certainly more efficient, it has
at least two defects. First, it forces us to assume $p > 2$ and
$R$ complete of unequal characteristic with perfect residue field.
Therefore, the situation that we will consider in this appendix is
slightly less general than the one discussed in the paper. Second,
we do not obtain an explicit description of models of
$\mu_{p^2,K}$, but instead we describe some objects of linear
algebra which correspond to these models through Breuil-Kisin
theory.

\bigskip

\noindent \textit{Acknowledgment.} The author thanks Eike Lau for
interesting remarks and discussions and for pointing out to him
the fact that Kisin's classification \emph{a priori} requires $p >
2$.

\subsection*{Statement of the main theorem}

Let us fix notation. Let $p$ be an odd prime number and $k$ a
perect field of characteristic $p$. We denote by $W = W(k)$ (resp.
$W_n = W_n(k)$) the ring of Witt vectors (resp. of truncated Witt
vectors) with coefficients in $k$ and $K_0$ its fraction field of
$W$. For any integer $n$, $W_n[[u]]$ is endowed with a continuous
(for the $u$-adic topology) rings endomorphism $\phi$ defined as
the usual Frobenius on $W_n$ and by $\phi(u)=u^p$. Let's fix a
totally ramified extension $K$ of $K_0$ of degree $e$ and an
uniformizer $\pi$ of $K$. We denote by $E(u)$ the minimal
polynomial of $\pi$ over $K_0$ and $R$ the ring of integers of
$K$. This one corresponds to the d.v.r. considered as base ring in
Tossici's paper.

\medskip

Let $\kis$ denote the following category:
\begin{itemize}
\item objects are $W_2[[u]]$-modules $\M$ with no $u$-torsion
endowed with a continuous (for the $u$-adic topology)
$\phi$-semi-linear endomorphism (called Frobenius) $\phi_\M : \M
\to \M$ whose image generates a sub-module containing $E(u) \M$;
\item morphisms are the $W_2[[u]]$-linear maps which commute with
Frobenius.
\end{itemize}

In \cite{kisin}, Kisin has constructed an anti-equivalence of
categories between $\kis$ and the category of finite, flat and
commutative $R$-group schemes annihilated by $p^2$. If we compose
with the Cartier duality we obtain an equivalence of category.
For our aims, an important property of the latter equivalence will
be the following: if $\M$ is the object of $\kis$ associated to a
group scheme $G$, then $\M[1/u]$ completely determines the Galois
representation $G(\bar K)$ (where $\bar K$ is an algebraic closure
of $K$), \emph{i.e.} the generic fiber of $G$. From this fact, it
is easy to prove that $G$ is a model of $\mu_{p^2,K}$ if and only
if $\M[1/u]$ is isomorphic to $W_2((u))$ endowed with the usual
Frobenius. We are going to prove the following result, which is
the exact analogue in our context of \ref{cor:modelli di Z/p^2 Z
sono cosi'2}.

\begin{thm}
\label{theo:principal} Let $\M$ be the object $\kis$ associated to
a finite flat $R$-group scheme whose generic fiber is isomorphic
to $\mu_{p^2,K}$. Then, there exist $n,m\in \N$, $a \in k[[u]]$
satisfying $\frac e {p-1} \geq m \geq n \geq 0$ and
\begin{eqnarray}
\phi(a) & \equiv & 0 \hphantom{F(u) u^m} \pmod {u^n} \label{eq:cond1} \\
u^{e-m(p-1)} \phi(a) - u^e a & \equiv & F(u) u^m \hphantom{0}
\pmod {u^{pn}} \label{eq:cond2}
\end{eqnarray}
together with two elements $e_1$ and $e_2$ in $\M$ such that:
\begin{itemize}
\item[i)] $\M$ is generated over $W_2[[u]]$ by $e_1$ and $e_2$
with the unique relation $u^{m-n} e_1 = p e_2$; \item[ii)]
Frobenius is given by $\phi(e_1) = u^{n(p-1)} e_1$ and $\phi(e_2)
= u^{m(p-1)} e_2 + \big[ u^{-n} \phi(a) - u^{m(p-1)-n} a \big]
e_1$.
\end{itemize}

Furthermore two triples $(n,m,a)$ have equal reduction $(n,m,(a
\pmod {u^n}))$ if and only if the associated groups are
isomorphic.

Conversely, any triple $(n,m,a)$ satisfying \eqref{eq:cond1} and
\eqref{eq:cond2} comes from a finite flat $R$-group scheme whose
generic fiber is isomorphic to $\mu_{p^2,K}$.
\end{thm}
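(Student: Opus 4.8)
The plan is to pass, via the equivalence recalled above, from group schemes to objects of $\kis$, and to classify directly those $\M$ whose generic fibre is $\mu_{p^2,K}$, i.e.\ those with $\M[1/u] \cong W_2((u))$ endowed with its usual Frobenius. First I would observe that, since $\M$ has no $u$-torsion, it injects into $\M[1/u] = W_2((u))\,v$, a free module of rank one; thus $\M$ is a finitely generated $W_2[[u]]$-lattice in a rank-one $W_2((u))$-module. Using the exact sequence $0 \to k((u)) \to W_2((u)) \to k((u)) \to 0$ (the first map being multiplication by $p$, the second reduction), I would analyse the two $k[[u]]$-lattices $\overline{\M} \subseteq k((u))$ (the image modulo $p$) and $\M \cap pW_2((u))$ (a lattice in $pW_2((u)) \cong k((u))$). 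Each is of the form $u^{\ast}k[[u]]$; after rescaling $v$ these yield two integers $n \le m$, and generators $e_1, e_2$ lifting them give the presentation $\M = \big(W_2[[u]]e_1 \oplus W_2[[u]]e_2\big)/(u^{m-n}e_1 - pe_2)$ of (i). Multiplying this relation by $p$ and using $p^2 = 0$ together with the absence of $u$-torsion shows $pe_1 = 0$.

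Next I would pin down the Frobenius. Since $e_1$ spans a rank-one $p$-torsion $\phi$-submodule, it corresponds to a model of $\mu_{p,K}$, and after a change of basis this forces $\phi_{\M}(e_1) = u^{n(p-1)}e_1$; I would then write $\phi_{\M}(e_2) = u^{m(p-1)}e_2 + c\,e_1$ for an unknown $c \in k[[u]]$, and set $c = u^{-n}\phi(a) - u^{m(p-1)-n}a$ so as to introduce $a \in k[[u]]$. The requirement that $c$ have no negative powers of $u$ is precisely \eqref{eq:cond1}. Applying $\phi_{\M}$ to the relation $u^{m-n}e_1 = pe_2$ and using $pe_1 = 0$ shows this Frobenius is compatible with the presentation for any such $a$, so no extra constraint appears at this stage.

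The technical heart, and the step I expect to be the main obstacle, is extracting \eqref{eq:cond2} from the defining height condition of $\kis$, namely that $\phi_{\M}(\M)$ generate a submodule containing $E(u)\M$. Writing $E(u) \equiv u^e + pF(u) \pmod{p^2}$, I would express $E(u)e_1$ and $E(u)e_2$ in terms of $\phi_{\M}(e_1), \phi_{\M}(e_2)$. Matching the $e_2$-coefficient forces $e \ge m(p-1)$, i.e.\ $m \le \tfrac{e}{p-1}$; solving for the $e_1$-coefficient (which lives in $k[[u]]$ since $pe_1 = 0$) and rewriting the Eisenstein correction $pF(u)e_2$ as $u^{m-n}F(u)e_1$ through the relation then gives, after clearing $u^{-n}$, exactly $u^{e-m(p-1)}\phi(a) - u^e a \equiv F(u)u^m \pmod{u^{pn}}$. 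The delicate bookkeeping is that one works simultaneously modulo $p$ in the $e_1$-part while tracking the $p$-divisible $e_2$-part. I would also record here that inverting $u$ gives $\M[1/u] = W_2((u))e_2$ with $\phi_{\M}(e_2) = (u^{m(p-1)} + pc\,u^{-(m-n)})e_2$, and confirm this is the module of $\mu_{p^2,K}$; this is what guarantees, for the converse, that a triple satisfying \eqref{eq:cond1}--\eqref{eq:cond2} produces a model of $\mu_{p^2,K}$ and not of $\mu_{p,K}\times \mu_{p,K}$.

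Finally, for uniqueness I would invoke full faithfulness of the functor, so that two groups are isomorphic iff the associated $\M$ are. The integers $n$ and $m$ are intrinsic (read off from $\phi_{\M}(e_1)$ and from the two lattices), hence preserved; the only remaining freedom is rescaling $e_1, e_2$ by units and shearing $e_2 \mapsto e_2 + \gamma e_1$ with $\gamma \in k[[u]]$. Computing the effect on $c$, namely $c \mapsto c + \phi(\gamma)u^{n(p-1)} - \gamma u^{m(p-1)}$, I would check that it replaces $a$ by $a + u^n\gamma$, so that the reduction $a \bmod u^n$ is the complete invariant. The converse is then immediate: given $(n,m,a)$ satisfying the inequalities and congruences, the formulas of (i)--(ii) define an object of $\kis$ (the computations above being reversible), and the $\M[1/u]$ computation identifies its generic fibre with $\mu_{p^2,K}$.
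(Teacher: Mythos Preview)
Your approach closely parallels the paper's: the filtration by $p$-torsion, the normal forms for $\phi(e_1)$ and $\phi(e_2)$, and the extraction of \eqref{eq:cond2} from the height condition all match. However, there is a genuine gap in how you introduce $a$.

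You write $\phi_\M(e_2)=u^{m(p-1)}e_2+c\,e_1$ and then ``set $c=u^{-n}\phi(a)-u^{m(p-1)-n}a$ so as to introduce $a$''. But not every $c\in k[[u]]$ admits such an $a$: the equation $\phi(a)-u^{m(p-1)}a=u^n c$ is in general not solvable in $k((u))$. For instance with $p=3$, $n=0$, $m=1$, taking $c=1$ gives a perfectly valid object of $\kis$ (one checks $E(u)\M\subset\langle\phi(e_1),\phi(e_2)\rangle$ whenever $e\ge 2$), yet $\phi(a)-u^{2}a=1$ has no solution, as one sees by comparing coefficients of $u^0$ and $u^2$. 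The point is that this particular $\M$ has generic fibre $\mu_{p,K}\times\mu_{p,K}$, not $\mu_{p^2,K}$: solvability of the equation for $a$ is \emph{equivalent} to the generic fibre being $\mu_{p^2,K}$. You actually carry out the relevant computation later (writing $\phi_\M(e_2)=(u^{m(p-1)}+pc\,u^{-(m-n)})e_2$ after inverting $u$), but you invoke it only for the converse. The paper handles this cleanly by \emph{defining} $a$ from the given isomorphism $\M[1/u]\simeq W_2((u))$: the image $z$ of $e_2$ is written as $z=u^m+pa$, and then the formula for $\phi(e_2)$ is a computation rather than an ansatz. You should reorganise so that the hypothesis on the generic fibre is what produces $a$.

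A smaller point: your uniqueness sketch treats only the shear $e_2\mapsto e_2+\gamma e_1$ within a fixed $\M$. A general isomorphism $f:\M\to\M'$ also rescales, and the paper shows the scaling unit must lie in $\F_p^\star$ and act by the \emph{same} scalar on both $e_1$ and $\bar e_2$ (this uses $\phi$-compatibility on $\M_1$ and on $\M/\M_1$); only after this reduction does the shear computation finish the argument.
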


The last assertion of the Theorem is easy: one just need to check
that the $\phi$-module $\M$ defined by conditions i) and ii) is
actually an object of $\kis$. From now on, we concentrate
ourselves to the proof of the rest of the Theorem.

\subsection*{Proof of existence}

Let $\M$ be a $\phi$-module over $W_2[[u]]$ such that $\M[1/u]$ is
isomorphic to $W_2((u))$ endowed with the usual Frobenius. Let us
denote by $\M_1$ the kernel of the multiplication by $p$ on $\M$
and $\M_2 = \M / \M_1$. It is easy to verify that they are both
modules over $W_1[[u]] = k[[u]]$ with no $u$-torsion. Moreover
they inherit endomorphisms $\phi_{\M_1}$ and $\phi_{\M_2}$ whose
images still generate a module which contains $E(u) \M_1 = u^e
\M_1$ and $E(u) \M_2 = u^e \M_2$ respectively. In the following,
we will write $\phi$ for $\phi_\M$, $\phi_{\M_1}$ and
$\phi_{\M_2}$.

\begin{lem}
The module $\M_1$ is free of rank $1$ over $k[[u]]$. Moreover,
there  exists a base $(e_1)$ of $\M_1$ and an integer $n \in [0,
\frac e {p-1}]$ such that $\phi(e_1) = u^{n(p-1)} e_1$.
\end{lem}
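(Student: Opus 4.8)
The plan is to descend everything to the discrete valuation ring $k[[u]]=W_1[[u]]$ and to read off the Frobenius structure of $\M_1$ from the generic fibre. First I would record that $\M_1=\ker(p\colon\M\to\M)$ is finitely generated over $k[[u]]$ --- it is a submodule of the finite module $\M$, killed by $p$, hence a module over $W_2[[u]]/p=k[[u]]$ --- and has no $u$-torsion, as already noted. Since inverting $u$ is exact, $\M_1[1/u]=\ker(p\colon\M[1/u]\to\M[1/u])$. Writing $\M[1/u]=W_2((u))\,\bar E$ with $\phi(\bar E)=\bar E$ (this is exactly the hypothesis that the generic fibre is $\mu_{p^2,K}$, i.e. $\M[1/u]\cong W_2((u))$ with its usual Frobenius), the kernel of $p$ is $pW_2((u))\,\bar E$, which is free of rank $1$ over $k((u))$ with generator $\bar e:=p\bar E$. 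As $\phi(p)=p$, we obtain the crucial fact that $\phi(\bar e)=\bar e$. Because $k[[u]]$ is a discrete valuation ring, the finitely generated torsion-free module $\M_1$ is free, of rank $\dim_{k((u))}\M_1[1/u]=1$; this proves the first assertion.

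For the normal form I would start from an arbitrary basis $f$ of $\M_1$ and write $\phi(f)=g\,f$ with $g\in k[[u]]$ (note $\phi(\M_1)\subseteq\M_1$ since $\phi$ commutes with $p$). Comparing $f$ with the $\phi$-fixed vector $\bar e$ inside $\M_1[1/u]$, write $f=h\,\bar e$ with $h\in k((u))^\times$; then $g=\phi(h)/h$. Setting $h=u^n v$ with $v\in k[[u]]^\times$ and $n\in\Z$, semilinearity ($\phi(u)=u^p$) gives $g=u^{n(p-1)}\,\phi(v)/v$, and $\phi(v)/v$ is a unit, so the order of vanishing of $g$ in $u$ equals $n(p-1)$. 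Since $g\in k[[u]]$ this forces $n\ge 0$. The key observation is that one need not solve any twisted equation to clear the unit: the element $e_1:=u^n\bar e=v^{-1}f$ already lies in $\M_1$ and is again a $k[[u]]$-basis (as $v^{-1}$ is a unit), and $\phi(e_1)=u^{pn}\phi(\bar e)=u^{pn}\bar e=u^{n(p-1)}e_1$, as required.

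It remains to bound $n$. The defining condition of $\kis$ says that $\phi(\M_1)$ generates a submodule containing $E(u)\M_1$, and over $k[[u]]$ one has $E(u)\equiv u^e\pmod p$, so this submodule contains $u^e\M_1$. Using the basis $e_1$ and $\phi(c\,e_1)=\phi(c)\,u^{n(p-1)}e_1$, together with the fact that $\phi(k[[u]])=k[[u^p]]$ (where $k$ perfect is used) generates $k[[u]]$ as a $k[[u]]$-module, the submodule generated by $\phi(\M_1)$ is exactly $u^{n(p-1)}\M_1$. The inclusion $u^{n(p-1)}\M_1\supseteq u^e\M_1$ is equivalent to $n(p-1)\le e$, i.e. $n\le e/(p-1)$, giving $n\in[0,e/(p-1)]$.

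The only genuinely delicate point is the one isolated in the second paragraph: a priori, normalizing $g$ to a pure power of $u$ would require solving $\phi(c)/c=(\phi(v)/v)^{-1}$ for a unit $c$, an Artin--Schreier-type condition that is not obviously solvable in $k[[u]]$. The hypothesis on the generic fibre removes this difficulty entirely by furnishing the canonical $\phi$-fixed generator $\bar e$ of $\M_1[1/u]$, against which the normalization becomes the trivial rescaling $e_1=u^n\bar e$. Everything else is routine semilinear bookkeeping.
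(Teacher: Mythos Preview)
Your proof is correct and follows essentially the same route as the paper: identify $\M_1[1/u]$ with $k((u))$ carrying its usual Frobenius (so that the generator $\bar e$, your $p\bar E$, is $\phi$-fixed), view a basis of $\M_1$ as $u^n$ times a unit in that picture, and take $e_1=u^n\bar e$. The paper does exactly this, only more tersely---it literally writes a basis $x$ of $\M_1$ as an element of $k((u))$, factors $x=u^n y$ with $y$ a unit, and sets $e_1=u^n$. Your version is more explicit about why no Artin--Schreier obstruction arises, and you additionally supply the bound $n(p-1)\le e$ from the condition $E(u)\M_1\subset\langle\phi(\M_1)\rangle$, which the paper's proof of this lemma does not spell out.
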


\begin{proof}
Since $k[[u]]$ is a discrete valuation ring, the fact that $\M_1$
has no $u$-torsion implies that its freeness. Moreover, it is
certainly of rank $1$ because $\M_1[1/u]$ is isomorphic to the
kernel of the multiplication by $p$ on $W_2((u))$, that is
$k((u))$. Let $x$ be any basis of $\M_1$. From above it follows
that we can consider it as an element of $k((u))$. We can write $x
= u^n y$ where $y$ is invertible in $k[[u]]$. Then, if we set $e_1
= u^n$, it is a basis of $\M_1$ and we have $\phi(e_1) = u^{np} =
u^{n(p-1)} e_1$ as expected.
\end{proof}

In the same way it is possible to prove that $\M_2 = k[[u]] \bar
e_2$ with $\phi(\bar e_2) = u^{m(p-1)} \bar e_2$ for some integer
$m \in [0, \frac e {p-1}]$. Let $e_2 \in \M$ be any lifting of
$\bar e_2$. Clearly it is a generator of $\M[1/u]$ as
$W_2((u))$-module. We deduce that
\begin{equation}
\label{eq:e1e2} e_1 = p u^{-\delta} \alpha e_2
\end{equation}
where $\delta$ is an integer and $\alpha$ is invertible in
$W_2[[u]]$. In fact, $\alpha$ is defined modulo $p W_2[[u]]$, so
that we may (and will) consider it as an element of $k[[u]]$. The
fact that $e_1$ generates $\M_1$ easily implies $\delta \geq 0$.
Moreover, since $\phi(e_2) \equiv u^{m(p-1)} e_2 \pmod p$,
applying $\phi$ to \eqref{eq:e1e2}, we obtain
$$\phi(e_1) = p u^{-p \delta} \phi(\alpha) \phi(e_2) = p u^{m(p-1) - p
\delta} \phi(\alpha) e_2.$$ Therefore $\phi(e_1) =
u^{(m-\delta)(p-1)} \frac{\phi(\alpha)} \alpha e_1$. Comparing
with $\phi(e_1) = u^{n(p-1)} e_1$, we obtain $m - \delta = n$ and
$\phi(\alpha) = \alpha$. The first condition gives $\delta = m -
n$ (and in particular $m \geq n$), while the second one implies
$\alpha \in \F_p^\star$. So, up to replacing $e_1$ by $\frac {e_1}
\alpha$, we may assume $\alpha = 1$.

\medskip

We have just proved that $\M$ is generated by two vectors $e_1$
and $e_2$ related by \eqref{eq:e1e2} with $\alpha = 1$. This is
exactly what appears in the statement of Theorem
\ref{theo:principal}. We also know that $\phi(e_1) = u^{n(p-1)}
e_1$. It still remains to precise the shape of $\phi(e_2)$. Let
$z$ denote the image of $e_2 \in \M[1/u]$ through the isomorphism
$\M[1/u] \simeq W_2((u))$. From $\phi(\bar e_2) = u^{m(p-1)} \bar
e_2$, we deduce that, up to multiplying $e_2$ by a $(p-1)$-th root
of unity, we can write $z = u^m + p a$, with $a \in k((u))$. After
some calculations, we obtain
$$\phi(e_2) = u^{m(p-1)} e_2 + \big[ u^{-n} \phi(a) - u^{m(p-1)-n} a
\big] e_1 = u^{m(p-1)} e_2 + b e_1.$$ Hence RHS have to be in
$\M$, which gives directly \eqref{eq:cond1} (using $m \geq n$).
Now, using $E(u) \M \subset \left< \phi(e_1), \phi(e_2) \right>$
(where the notation $\left< \, \cdots \right>$ means the generated
submodule), we find that there exist $x,y \in W_2[[u]]$ such that
$$E(u) e_2 = x u^{n(p-1)} e_1 + y (u^{m(p-1)} e_2 + b e_1).$$
Reducing modulo $p$, we have $y=u^{e-m(p-1)} + p y'$ for some $y'
\in W_2[[u]]$. Since $x$ and $y'$ are defined modulo $p$, one may
consider them as elements of $k[[u]]$. After some calculations, we
get $F(u) = b u^{n-pm+e} + x u^{pn-m} + y' u^{(p-1)m}$ where
$F(u)$ is defined by the equality $E(u) = u^e + p F(u)$. As $m
\geq n$, we have $(p-1)m \geq pn -m$. This shows that the equality
we obtained is equivalent to the congruence $b u^{n-pm-e} \equiv
F(u) \pmod {u^{pn-m}}$. Replacing $b$ by its expression, we
finally obtain \eqref{eq:cond2}. It remains to be prove that $a$
is an element of $k[[u]]$ (\emph{a priori}, we only know that it
belongs to $k((u))$), which is clear from\eqref{eq:cond1}.

\subsection*{End of the proof}

Let $\M$ and $\M'$ be two $\phi$-modules presented as in Theorem
\ref{theo:principal} with parameters $(n,m,a)$ and $(n',m',a')$
respectively. We want to prove that $\M$ and $\M'$ are isomorphic
if and only if $n = n'$, $m = m'$ and $a \equiv a' \pmod {u^n}$.
Let us assume that there exists an isomorphism $f : \M \to \M'$.
Since $\phi$ acts by multiplication by $u^{n(p-1)}$ (resp.
$u^{n'(p-1)}$) on $\M_1 = \ker p_{|\M}$ (resp. $\M'_1 = \ker
p_{|\M'}$), we get $n = n'$. In fact, examining the action of
$\phi$ on $e_1$ and $e'_1$ it is easy to see that there exists
$\alpha \in \F_p^\star$ such that $f(e_1) = \alpha e'_1$. In the
same way, regarding actions of $\phi$ on quotients $\M/\M_1$ and
$\M'/\M'_1$ , we have $m = m'$. Then, equalities $u^{m-n} e_1 = p
e_2$ and $u^{m-n} e'_1 = p e'_2$ give $f(e_2) = \alpha e'_2 + x
e_1$ for some element $x \in k[[u]]$. A little computation shows
that the compatibility with $\phi$ implies
$$\alpha u^{-n} \phi(a') - \alpha u^{m(p-1)-n} a' + \phi(x) u^{n(p-1)} =
x u^{m(p-1)} + \alpha u^{-n} \phi(a) - \alpha u^{m(p-1)-n} a,$$
which gives $\phi(t) = u^{m(p-1)} t$ where we set $t = \alpha (a'
- a) + u^n x$. Comparing $u$-adic valuations of both sides, we see
that any solution $t$ has to be divisible by $u^m$. As $m \geq n$,
we have $a \equiv a' \pmod {u^n}$ as wanted. Conversely, if $a
\equiv a' \pmod {u^n}$, it is sufficient to set $\alpha = 1$ and
$x = \frac{a - a'} {u^n}$ to obtain an isomorphism $f : \M \to
\M'$.

\subsection*{Discussion about $\mathbf{p=2}$}

In this appendix, we have assumed $p > 2$. The only restriction
for that is the fact that Kisin's equivalence of categories
between $\kis$ and finite flat and commutative $R$-group schemes
annihilated by $p^2$ requires (at least nowadays) $p \neq 2$.

Nevertheless, in \cite{kisin2}, Kisin proved that the equivalence
of categories still holds for $p=2$ between   $R$-group schemes
with unipotent special fiber and a suitable subcategory of $\kis$.
Using this result and the method of this appendix, one can
classify all  models of $\mu_{p^2,K}$ with unipotent special
fiber, which  we will call, for simplicity, \emph{unipotent}
models. Actually, it is not so hard to find by hand non-unipotent
models. Indeed, by Lemma 3.1 of thee paper, one knows that one
model $G$ of $\mu_{p^2,K}$ must be an extension of $G_{\mu,1}$ by
$G_{\lambda,1}$. If $G$ is not unipotent, it means that
$G_{\mu,1}$ or $G_{\lambda,1}$ is itself not unipotent, that is
$\mu$ or $\lambda$ is a unit. By Lemma 3.2 of the paper, it
follows that $\lambda$ is a unit, {\emph i.e.} $G_{\lambda,1}
\simeq \mu_{p,R}$. This remaining case can be treated rather
easily (\emph{cf.} \S 3.4 of \emph{loc. cit.}): in particular it
does not use Sekiguchi-Suwa's theory.

Moreover in a very recent note (still unpublished), Eike Lau
proved that Kisin's classification also works in general for $p =
2$. Consequently, the method of the appendix can finally be
applied without restriction on the characteristic $p$.

\bibliographystyle{smfplain}
\bibliography{bibliomodel3} \nocite{*}

\end{document}